\documentclass[12pt]{article}

\usepackage[top=1in, bottom=1in, left=1in, right=1in]{geometry}

\usepackage[style=trad-abbrv, maxnames=99, maxalphanames=9, doi=false]{biblatex}

\bibliography{ComplexGeometry.bib}
\usepackage{amsmath,amssymb,amsthm}
\usepackage{xcolor}
\usepackage[
  colorlinks=true,
  allcolors=black
]{hyperref}

\newtheorem{theorem}{Theorem}[section]
\newtheorem{proposition}[theorem]{Proposition}

\newtheorem{lemma}[theorem]{Lemma}
\newtheorem{corollary}[theorem]{Corollary}
\theoremstyle{remark}
\newtheorem{remark}[theorem]{Remark}

\newcommand{\vol}{\textup{vol}}
\newcommand{\RR}{\Bbb R}
\newcommand{\PSH}{\operatorname{PSH}}
\newcommand{\Sing}{\operatorname{Sing}}
\newcommand{\Supp}{\operatorname{Supp}}
\newcommand{\ddc}{dd^c}

\def\beq{\begin{equation}}
\def\eeq{\end{equation}}

\title{Demailly--Koll\'ar continuity on klt pairs, and applications to alpha and delta invariants}
\author{Tam\'as Darvas and Kewei Zhang}
\date{}

\begin{document}
\maketitle

\begin{abstract}
We establish a Demailly--Koll\'ar type continuity theorem for
plurisubharmonic functions with respect to adapted measures on normal
complex analytic klt pairs. 

As applications,
we prove the equality of the analytic and divisorial versions of the alpha and delta invariants on compact normal K\"ahler klt
pairs, thereby completing a program initiated by the second author. We further
show that the two local alpha invariants introduced by Guedj and
Trusiani for an isolated log terminal singularity coincide and that
their common value is the $n$th root of Li's normalized volume.

These identities have geometric consequences. The equality for the
delta invariant yields a Yau--Tian--Donaldson type divisorial criterion
for the solvability of twisted K\"ahler--Einstein equations in big
cohomology classes, without a semipositivity assumption on the twist. The local alpha identity
determines algebraically the critical exponent governing the existence of positively curved KE metrics near an isolated log terminal
singularity, thus confirming a prediction of Guedj and Trusiani.
\end{abstract}

\tableofcontents

\section{Introduction}

\paragraph{Background.}
One major theme in pluripotential theory is to understand the singularities of plurisubharmonic (psh) functions, since they are intimately related to the extendability of holomorphic objects, solvability of partial differential equations, birational geometry of algebraic varieties, and stability notions for canonical metrics. 

In this direction of study, Demailly--Koll\'ar proved the lower semi-continuity of complex singularity exponents of psh functions on a complex manifold \cite{DK01}.
This property turned out to be extremely useful in various problems. For instance, it led to the important fact that Tian's analytic $\alpha$ invariant for a polarized K\"ahler manifold coincides with the global log canonical threshold arising from birational geometry (see  \cite[Theorem~A.3]{CS08}, \cite[Theorem~2.2]{Shi10}), which further led to the discovery of numerous examples of K\"ahler--Einstein manifolds (see e.g. \cite{Che08DP,CS08,CS09Extremal,CPW14}). 
Based on a similar philosophy, in \cite{Zh21} the second author proved that a certain optimal Moser--Trudinger exponent of a polarized K\"ahler manifold (known as the analytic $\delta$ invariant \cite{Zhang21}) coincides with the valuative $\delta$ invariant arising from the K-stability theory \cite{FO18,BJ20}, thus proving a Yau--Tian--Donaldson (YTD) correspondence for twisted K\"ahler--Einstein metrics beyond the Fano setting (with the twisting term not necessarily semipositive). Note that the argument of \cite{Zh21} also uses Demailly--Koll\'ar's result at a crucial point (see the proof of \cite[Proposition~3.1]{RTZ20}).

At the heart of the original Demailly--Koll\'ar theorem is an $L^1$ \emph{continuity} statement for exponential psh weights, from which the lower semi-continuity of log canonical thresholds follows. While the original theorem concerns complex manifolds, this algebraic consequence holds in substantially greater generality, allowing the underlying space to be singular (see \cite[\S~8, Lemma~8.6]{Kol97} and \cite[Corollary~1.10]{Ambro16}). This generality is essential in birational geometry, where singularities are inevitable. That said, an extension of the Demailly--Koll\'ar continuity result to singular spaces has remained elusive since the publication of \cite{DK01}. Such an extension has been explicitly sought for in several works, including \cite[p.~2]{Liu13}, \cite[Sub-problem A2]{PanThesis23}, \cite[below Proposition~D]{PT25}, \cite[p.~742]{GT23}, and \cite[Remark~1.6]{KimKollar24}, reflecting continued interest in this question.

A key difficulty in proving a singular Demailly--Koll\'ar continuity theorem is that it does not follow directly from resolution of singularities. Indeed, the adapted measure lifts to a measure on a resolution with weight $e^{\chi-\psi}$, where $\chi,\psi$ are psh, and Demailly--Koll\'ar continuity fails for general weights of this form; see \cite[Remark~1.3]{Hi14}. Overcoming this obstruction requires use of intrinsic properties of the singular space and its adapted measure; see the discussion following Lemma~\ref{lemma: vol_domination_sup}.

The goal of this article is to settle this problem, showing that the pathological phenomenon presented in \cite[Remark 1.3]{Hi14} does not appear when dealing with pullbacks of the adapted measure of a klt pair. We then give several applications primarily to K\"ahler--Einstein problems. In particular, we will extend results regarding $\alpha$ and $\delta$ invariants from \cite{CS08,Shi10,Zhang21,Zh21,DZ24} to the singular setting. 

\paragraph{Demailly--Koll\'ar continuity for klt pairs.}
Let $(X,\Delta)$ be a connected normal complex analytic pair of pure dimension $n$, where
\[
\Delta=\sum_l d_l\Delta_l\geq 0
\]
is an effective analytic (Weil) $\mathbb R$-divisor and
$K_X+\Delta$ is $\mathbb R$-Cartier. Recall that an $\mathbb R$-Weil
divisor $P$ is $\mathbb R$-Cartier if it admits a finite presentation
$
P=\sum_\alpha c_\alpha P_\alpha, \ c_\alpha\in\mathbb R,
$
where each $P_\alpha$ is Cartier, i.e., locally principal.

The pair $(X,\Delta)$ is klt if  there exists a log resolution $\pi: Y \to X$ for $(X,\Delta)$ and an $\Bbb R$-Cartier divisor $A$ on $Y$ such that 
\begin{equation}
\label{eq: discrepancy}
  K_Y=\pi^*(K_X+\Delta)+A,
  \qquad
  A=\sum_j a_jE_j,
\end{equation}
and $a_j>-1$.

We choose an $\mathbb R$-Cartier presentation of $K_X+\Delta = \sum_k d_k D_k$ and  a smooth (formal) Hermitian metric $h_\Delta$ on the associated $\mathbb R$-line bundle. This simply means that $h_\Delta = \Pi_k h^{d_k}_k$, where $h_k$ is a choice of Hermitian metric on $\mathcal O(D_k)$.

On  $X_{\mathrm{reg}}$  a nonvanishing local $\mathbb R$-frame
$\sigma$ of $K_X+\Delta = K_X + \sum_l d_l \Delta_l$ is given by the formal tensor product $\eta \otimes \Pi_l f_l^{-d_l}$, where locally $\Delta_l = \{f_l =0\}$ and $\eta$ is a non-vanishing local $n$-form. Introducing the local $(n,n)$-density
$\sigma\wedge\overline\sigma =  \Pi_l |f_l|^{-2d_l} \eta \wedge \overline{\eta}$ we define the following measure:
\begin{equation}
\label{eq: adapted_measure_def}
 d\mu:=
  \frac{(i)^{n^2}\sigma\wedge\overline\sigma}
       {h_\Delta(\sigma,\sigma)}.
\end{equation}
It is elementary to see that $d\mu$ is well defined on $X_\textup{reg}$, only depends on the choice of $h_\Delta$, but not the choice of $\sigma$. We extend $d\mu$ to $X$, by putting no mass on $X_\textup{Sing}$.

On the log resolution $Y$ this measure has the local form
\begin{equation}
\label{eq: adapted_density_res}
 d \tilde \mu=e^G\prod_i|z_i|^{2a_i}\,dV_Y,
 \qquad \pi_*\tilde \mu=\mu,
\end{equation}
where $G$ is smooth, $dV_Y$ is a smooth volume form,  and the divisors $E_i=\{z_i=0\}$ have simple normal
crossings.  The inequalities $a_i>-1$ imply local integrability of $d \tilde \mu$ on $Y$.

Our normal space $X$ also admits local analytic K\"ahler volumes
$dV_X$. We choose a local embedding
$
X\hookrightarrow \mathbb C^N$. We restrict the Euclidean K\"ahler
metric $\omega_{\mathrm{Euc}}$ of \(\mathbb C^N\) to \(X_{\mathrm{reg}}\).  This will give us $dV_X$ using the following formula:
\begin{equation}\label{eq:  DVX_def}
dV_X
=\frac{\omega_{\mathrm{Euc}}^n}{n!}\bigg|_{X_{\mathrm{reg}}}.
\end{equation}
After declaring $dV_X(X_{\Sing})=0$, $dV_X$ becomes a locally defined measure. Such measures are of course not unique, however given a different local K\"ahler volume $d\tilde V_X$ on the same open set $U \subset X$ (defined using a different local embedding),  there exists $C_y> 1$ such that $1/C_y d\tilde V_X \leq d V_X \leq C_y d\tilde V_X$ on a neighborhood of any $y \in U$.

Let $\Omega\subset X$ be open and let $u_j,u\in\PSH(\Omega)$. We say
that $u_j\to u$ in $L^1_{\mathrm{loc}}(\Omega)$ if the convergence
holds with respect to one, and hence any, local K\"ahler volume
measure $dV_X$.

For a psh function $u$ and $K\Subset\Omega$ define
\begin{equation}
\label{eq: compact_threshold}
 c_{\mu,K}[u]
 :=\sup\left\{
 c>0:\ e^{-cu}\in L^1(U,\mu)
 \text{ for some }K\subset U\Subset\Omega
 \right\}.
\end{equation}

Our first result extends the Demailly--Koll\'ar continuity theorem \cite{DK01} to the klt setting.

\begin{theorem}[klt Demailly--Koll\'ar continuity]
\label{thm: main_klt}
Let $(X,\Delta)$ be a normal analytic klt $\mathbb R$-pair with
$\Delta\geq 0$, let $\mu$ be an adapted measure, and let
$K\Subset\Omega\subset X$. Suppose
$ u_j,u\in\PSH(\Omega),  u_j\to u$ in $L^1_{\rm loc}(\Omega)$.
For $\lambda>0$ we assume that
\begin{equation}
\label{eq: main_integral_bound}
             \int_Ue^{-\lambda u}\,d\mu<\infty
             \quad\text{for some }K\subset U\Subset\Omega.
\end{equation}
Then $\lambda<c_{\mu,K}[u]$, and for every
$\lambda<t<c_{\mu,K}[u]$ there exists
$K\subset W\Subset\Omega$ such that $e^{-tu_j},e^{-tu} \in L^1(W,\mu)$ for $j$ large enough and 
\begin{equation}
\label{eq: main_l1_convergence}
       e^{-\lambda u_j}\to e^{-\lambda u}
       \quad\text{in }L^1(W,\mu).
\end{equation}
\end{theorem}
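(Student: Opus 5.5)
The plan is to move everything to a log resolution $\pi:Y\to X$ and apply the manifold Demailly--Koll\'ar theorem \cite{DK01} there to suitably modified weights. The convergence statement \eqref{eq: main_l1_convergence} will then follow from a uniform integrability bound together with convergence in measure.

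\emph{Step 1: reduction to uniform integrability.} Since $\mu=\pi_*\tilde\mu$ with $\tilde\mu=e^G\prod_i|z_i|^{2a_i}dV_Y$ and $a_i>-1$, the density $f=d\mu/dV_X$ lies in $L^p_{\rm loc}$ for some $p>1$. Hence $L^1_{\rm loc}(dV_X)$ convergence $u_j\to u$ gives convergence in $\mu$-measure of $e^{-\lambda u_j}$ to $e^{-\lambda u}$ on compacts. By Vitali's theorem it then suffices to produce $t>\lambda$, a neighbourhood $W$ of $K$, and a bound $\sup_{j\ge j_0}\int_W e^{-tu_j}\,d\mu<\infty$. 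Indeed, $e^{-\lambda u_j}$ is then uniformly integrable, because $(t/\lambda)$-th powers are uniformly bounded.

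\emph{Step 2: openness on $Y$.} Set $\phi=u\circ\pi$ and $\phi_j=u_j\circ\pi$, and split $A=A^+-A^-$. Write $\psi=\sum_i a_i^-\log|z_i|^2$ and $\chi=\sum_i a_i^+\log|z_i|^2$, so $d\tilde\mu=e^{G+\chi-\psi}dV_Y$ with all coefficients of $\psi$ in $[0,1)$. Using compactness of psh functions and the fact that $\pi$ is an isomorphism off a nowhere dense analytic set, I would check that $\phi_j\to\phi$ in $L^1_{\rm loc}(\pi^{-1}\Omega)$. The hypothesis \eqref{eq: main_integral_bound} reads $\int e^{-\lambda\phi+\chi-\psi}dV_Y<\infty$. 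Strong openness (Guan--Zhou), applied to the psh weight $\lambda\phi+\psi$ twisted by the holomorphic factor $\prod z_i^{a_i^+}$ (approximating non-integral $a_i^+$ by rational exponents and using multivalued sections), together with Hölder, gives $\int e^{-t\phi+\chi-\psi}<\infty$ for some $t>\lambda$. This yields $\lambda<c_{\mu,K}[u]$.

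\emph{Step 3: the uniform bound, which is the main obstacle.} For directly manifold-type weights, DK applied to $t\phi_j+\psi-\chi$ would suffice. However, $-\chi$ is not psh, and \cite[Remark~1.3]{Hi14} shows that DK genuinely fails for weights $e^{\chi-\psi}$. So we must exploit that $\phi_j$ are pullbacks from $X$, hence constant on the fibres of $\pi$.

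What I would try first is to localize at a point $x\in K$ and dominate $\tilde\mu$ by $\pi$-pullback data. Over a small neighbourhood $V$ of $x$, the positive-discrepancy factor $e^{\chi}$ is bounded. The difficulty is only that $e^{-t\phi}$ might be integrable against $e^{\chi-\psi}$ but not against $e^{-\psi}$.

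To get around this, I would use fibre-constancy in the following way. For each exceptional divisor $E_i$ with $a_i>0$, the integral of $e^{-t\phi}$ near $E_i$ is controlled by its integral over a punctured tubular region near $E_i$, by a sub-mean-value argument along discs transverse to $E_i$ (this is where a volume-domination/sup estimate like Lemma~\ref{lemma: vol_domination_sup} enters). That region projects to a region of $X$ where we can compare with $\mu$ on a slightly larger set.

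If this works, it reduces to bounding $\int e^{-t\phi_j-\psi}dV_Y$ on a compact subset of $\pi^{-1}(V)$. Classical DK on the manifold $Y$ for the psh weights $t\phi_j+\psi\to t\phi+\psi$ gives exactly such a bound, after possibly shrinking $t$ towards $\lambda$. A finite cover of $K$ by such $V$ then produces $W$.

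Finally, for $\lambda<t<c_{\mu,K}[u]$ with $t$ arbitrary, I would rerun Steps 2--3 with $\lambda$ replaced by a value $t'\in(t,c_{\mu,K}[u])$ at which integrability holds.
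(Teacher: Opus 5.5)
\textbf{There is a genuine gap in Step 3.}

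Steps 1 and 2, and the final bootstrapping in $t$, are essentially fine and agree with the paper. Step 1 uses convergence in $\mu$-measure together with a uniform $L^{t/\lambda}$ bound and Vitali. Step 2 is openness via Guan--Zhou, in the form of \cite[Corollary B.2]{BBJ21}. The problem is Step 3, which carries the whole content of the theorem, and the mechanism you propose there does not work.

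\emph{The sub-mean-value step fails.} The function $e^{-t\phi}$ does not satisfy a sub-mean-value inequality across $E_i$. On a transverse disc it behaves like $|z_i|^{-t\nu(\phi,E_i)}$, which has a pole at the centre. So its integral over a tube around $E_i$ cannot be controlled by its integral over a punctured tube: the former can be infinite while the latter is finite. Fibre-constancy of $\phi$ does not help, because the fibres of $\pi$ lie inside the exceptional divisors, while the divergence is transverse to them. Hence the factor $e^{\chi}$ cannot be discarded, and classical DK on $Y$ for the weights $t\phi_j+\psi$ has no integrable limit to start from.

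\emph{A concrete example.} Take $X=\mathbb C^2/\{\pm1\}=\{xy=z^2\}$, $\Delta=0$, $u=\log|x|$, so that $c_\mu[u]=1$.
\begin{itemize}
\item On the minimal (crepant) resolution, $\operatorname{div}(\pi^*x)=E+2C$.
\item Blowing up the point $C\cap E$ gives a log resolution with an exceptional divisor $E'$ of discrepancy $1$ and $\operatorname{ord}_{E'}(\pi^*x)=3$.
\item Near a general point of $E'$ you get $e^{-t\phi}\,d\tilde\mu\sim|\xi|^{2-3t}dV_Y$, but $e^{-t\phi}\,dV_Y\sim|\xi|^{-3t}dV_Y$. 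Here $\psi=0$ and $\chi=\log|\xi|^2$.
\item So for every $t\in[2/3,1)$, $\int e^{-t\phi-\psi}dV_Y=\infty$ on every neighbourhood of $E'$, although $t<c_\mu[u]$.
\end{itemize}
Sub-mean-value and $L^2$-to-$L^\infty$ estimates such as Lemma~\ref{lemma: vol_domination_sup} are available only for $|F|^2$ with $F$ holomorphic, not for $e^{-t\phi}$.

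\emph{The missing idea.} You need a device that transports integrability, uniformly in $j$, from a region where the classical theorem applies to a neighbourhood of $x\in\Sigma$. That device has to produce holomorphic functions. The paper does this as follows.
\begin{itemize}
\item It slices by $\{h=w\}$ with $h|_\Sigma=0$, and uses classical DK on the smooth slice to get $\int_{H_0(w)}e^{-t_1u_k}\,d\mu_w\le C\varepsilon^2/|w|^2$.
\item It applies Ohsawa--Takegoshi (Zhou--Zhu) on the resolution to $s_D$, $D=\lceil A\rceil$, with the metric $h_{\mathcal M}e^{-tu_k\circ\pi}$ on $\mathcal M=\mathcal O(D-K_{V_1})$. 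The norm $|s_D|^2$ reproduces exactly $e^{-tu_k}d\tilde\mu$, positive discrepancies included.
\item The identity $\pi_*\mathcal O(D)=\mathcal O$ descends the extension to $F_k\in\mathcal O(U_0)$ with $F_k=1$ on the slice and $\int_{U_0}|F_k|^2e^{-tu_k}\,d\mu\le C\varepsilon^2/|w|^2$ (Lemma~\ref{lemma: adapted_slice_ext}).
\end{itemize}
Only at this point is a sub-mean-value argument used, and it is applied to the psh function $|F_k|^2$.
\begin{itemize}
\item The domination $dV_X\le C\,d\mu$ gives $\sup_{U_{-1}}|F_k|\le C\varepsilon/|w|$. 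This domination is where $\Delta\ge0$ and the adapted structure enter, and it is exactly what fails in Hiep's example.
\item The quantitative division Lemma~\ref{lemma: quant_div} then gives $|F_k(x)|\ge 3/4$.
\item A normal-families argument gives $|F_k|\ge 3/8$ on a fixed neighbourhood $W$ of $x$.
\item Hence $\int_W e^{-tu_k}\,d\mu\le \frac{64}{9}\int_{U_0}|F_k|^2e^{-tu_k}\,d\mu$, which is bounded uniformly in $k$.
\end{itemize}
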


That $\lambda<c_{\mu,K}[u]$ holds is essentially contained in \cite[Corollary B.2]{BBJ21}, which follows from
Guan--Zhou's strong openness \cite{GuZh15}. So the primary goal is to prove the continuity statement of \eqref{eq: main_l1_convergence}. The strategy is to adapt some of Hiep's ideas from \cite{Hi14} to our singular setting. We slice $\Omega$ by the level sets of a holomorphic
function $h$ whose zero set contains
$\operatorname{Sing}X\cup\operatorname{Supp}\Delta$. For a 
nearby regular value $w$, Sard's theorem ensures that $\{h=w\}$ is a
smooth hypersurface, to which the classical Demailly--Koll\'ar
continuity theorem applies. A key step is then to extend holomorphic
functions from $\{h=w\}$ to $\Omega$ using the Ohsawa--Takegoshi
extension theorem of Cao \cite{Cao17}, or that of Zhou--Zhu
\cite{ZhZh18}. Then a Vitali type argument, and basic properties of holomorphic functions on normal spaces prove \eqref{eq: main_l1_convergence}.

\paragraph{Applications to (twisted) alpha invariants.} Let $(X,\Delta)$ be a compact normal K\"ahler klt pair, with $\Delta\geq0$ an effective $\mathbb R$-Weil divisor. Let
$\theta$ be a smooth closed real $(1,1)$-form whose cohomology class
$\{\theta\}$ is big. We fix an adapted measure $d\mu$ for $(X,\Delta)$
and a quasi-plurisubharmonic (qpsh) function $\psi$ on $X$ so that
$e^{-\psi}d\mu$ has unit mass.

The analytic $\psi$-twisted alpha invariant of $(X,\Delta)$ associated with
$\{\theta\}$  is defined by
\[
\alpha_\psi^A(X,\Delta,\{\theta\})
:=
\sup\left\{
\alpha>0:
\sup_{\substack{u\in\operatorname{PSH}_\theta,\  \sup_X u=0}}
\int_X e^{-\alpha u}\, e^{-\psi}d\mu
<\infty
\right\}.
\]

Equivalently, $\alpha_\psi^A(X,\Delta,\{\theta\})$ is the supremum of all $\alpha>0$ for which there
exists a constant $C_\alpha>0$ such that
\[
\int_X e^{-\alpha(u-\sup_X u)}\,e^{-\psi} d\mu
\leq C_\alpha
\]
for every $u\in\operatorname{PSH}_\theta$.
One can also introduce the algebraic/valuative alpha invariant:
\[
\begin{aligned}
\alpha_\psi(X,\Delta,\{\theta\})
&=
\inf_E
\frac{A_{X,\Delta,\psi}(E)}
{\displaystyle\sup_{u\in\operatorname{PSH}_\theta}\nu(u,E)} \\
&=
\inf_E
\frac{A_{X,\Delta,\psi}(E)}
{\tau_{\{\theta\}}(E)},
\end{aligned}
\]
where the infimum ranges over prime divisors $E$ over $X$, represented on a log resolution $\pi:Y\to X$ of $(X,\Delta)$, and $
A_{X,\Delta,\psi}(E):=A_{(X,\Delta)}(E)-\nu(\psi,E)$, with $\nu(\psi,E)$ being the generic Lelong number of $\psi$ along $E$ (with the convention in \cite[(13)]{DZ22}).  Additionally,
\[
\tau_{\{\theta\}}(E)
:=
\sup\left\{
t\geq 0:
\pi^*\{\theta\}-t\{E\}
\text{ is big on }Y
\right\}
\]
is the pseudoeffective threshold of $\{\theta\}$ along $E$. In the case where $\theta\in c_1(L)$ for some big line bundle $L$ and $\psi=0$, this valuative alpha invariant coincides with the global log canonical threshold of $L$ \cite[Theorem C]{BJ20}.

As a first application of Theorem \ref{thm: main_klt} we show that these two invariants coincide:

\begin{theorem}\label{thm: alpha_eq alpha_A} Let $(X,\Delta)$ and $\psi$ be as above. Let $\theta$ be a smooth closed $(1,1)$-form on $X$ such that $\{\theta\}$ is a big class. Then 
$$\alpha_\psi(X,\Delta,\{\theta\}) = \alpha_\psi^A(X,\Delta,\{\theta\}).$$
\end{theorem}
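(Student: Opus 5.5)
We prove the two inequalities separately.

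\emph{The inequality $\alpha_\psi^A\le \alpha_\psi$.} This direction is elementary and valuative. Fix a prime divisor $E$ over $X$, realized on a log resolution $\pi:Y\to X$ of $(X,\Delta)$. For any $u\in\PSH_\theta$ with $\sup_X u=0$ and generic Lelong number $\nu(u,E)=\nu$, pull back to $Y$. There
\[
e^{-\alpha u}e^{-\psi}d\mu=e^{-\alpha \pi^*u-\pi^*\psi}e^G\prod|z_i|^{2a_i}dV_Y .
\]
Near a generic point of $E$ one has $\pi^*u\le \nu\log|z_E|+O(1)$ and $\pi^*\psi\ge \nu(\psi,E)\log|z_E|-C$ generically. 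Hence integrability of this density forces $\alpha\nu+\nu(\psi,E)-a_E<1$, that is, $\alpha\nu<A_{X,\Delta,\psi}(E)$. Here we use the convention of \cite{DZ22} for the Lelong numbers, possibly after first shrinking to a generic point where $\pi^*\psi$ has exactly Lelong number $\nu(\psi,E)$, or applying the upper bound via Siu decomposition.

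A uniform bound on the integrals for all $u$ then gives $\alpha\le A_{X,\Delta,\psi}(E)/\sup_u \nu(u,E)$. This needs a little care, since uniformity prevents approaching the threshold: one rescales $u$ or uses the fact that $\alpha'<\alpha^A$ is also admissible. Finally we take the infimum over $E$. The identification $\sup_u\nu(u,E)=\tau_{\{\theta\}}(E)$ is standard, via the Boucksom divisorial Zariski decomposition on $Y$.

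\emph{The inequality $\alpha_\psi\le\alpha_\psi^A$.} This is the main direction, and it uses Theorem~\ref{thm: main_klt}. Suppose $\alpha<\alpha_\psi$ but the integrals are unbounded. Then there are $u_j\in\PSH_\theta$ with $\sup u_j=0$ and $\int e^{-\alpha u_j}e^{-\psi}d\mu\to\infty$. By compactness of normalized $\theta$-psh functions in $L^1$, after passing to a subsequence $u_j\to u$ with $u\in\PSH_\theta$ and $\sup u=0$.

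\emph{Integrability of the limit.} We claim $e^{-\alpha' u-\psi}\in L^1(\mu)$ for some $\alpha'>\alpha$ close to $\alpha_\psi$. Valuatively, for every $E$ we have $\nu(u,E)\le\tau(E)$, so $\alpha'\nu(u,E)+\nu(\psi,E)<A_{X,\Delta}(E)$ with a uniform gap. We convert this into integrability by working on $Y$ with the weight $\alpha'\pi^*u+\pi^*\psi$ against $\prod|z_i|^{2a_i}$. The valuative criterion for integrability of multiplier ideals, due to Boucksom--Favre--Jonsson and based on strong openness \cite{GuZh15}, gives local integrability exactly when $\sup_E \nu(\alpha'u+\psi,E)/A(E)<1$. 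We apply it to the klt pair locally, as in \cite[Corollary B.2]{BBJ21}. The uniformity needed here is that this supremum over all $E$ is at most $\alpha'/\alpha_\psi$ combined with the $\psi$ term. I expect to handle the $\psi$ term by subadditivity of Lelong numbers, $\nu(\alpha'u+\psi,E)\le\alpha'\nu(u,E)+\nu(\psi,E)$. This step is delicate because the sum of the two ratios must be controlled simultaneously.

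\emph{Passing to the limit.} Now apply Theorem~\ref{thm: main_klt} locally. Cover $X$ by finitely many compact sets $K$ contained in charts $\Omega$. The twist $e^{-\psi}$ is the main obstacle, since the theorem is stated for the adapted measure $\mu$ alone. I would handle it with Hölder: pick $p>1$ with $e^{-p\psi}\in L^1(\mu)$, which is possible by openness of the normalization, and $q$ its conjugate exponent. Then
\[
\int_W |e^{-\alpha u_j}-e^{-\alpha u}|e^{-\psi}d\mu\le \|e^{-\psi}\|_{L^p}\,\big\|e^{-\alpha u_j}-e^{-\alpha u}\big\|_{L^q(\mu)}.
\]
The right-hand side tends to $0$ by Theorem~\ref{thm: main_klt} applied with exponent $q\alpha$, once the integrability of the previous step holds for $q\alpha$. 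For this we choose $p$ large enough that $q\alpha<\alpha'$ while $p\psi$ still works; this trade-off must be balanced. The balancing is easiest if the previous step is instead proved for $e^{-q\alpha u}$ directly, with $q$ close to $1$, using the margin in $\alpha<\alpha_\psi$.

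The $L^q$ convergence follows from the $L^1$ convergence at exponent $q\alpha$ together with convergence of $|e^{-\alpha u_j}-e^{-\alpha u}|^q\le |e^{-q\alpha u_j}-e^{-q\alpha u}|$ type bounds, or from uniform $L^{q'}$ bounds and Vitali. Consequently $\int e^{-\alpha u_j}e^{-\psi}d\mu\to\int e^{-\alpha u}e^{-\psi}d\mu<\infty$, which is a contradiction.
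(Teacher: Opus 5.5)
Your overall structure matches the paper's. The valuative criterion gives $\alpha_\psi^A\le\alpha_\psi$, and the converse goes by contradiction, $L^1$-compactness and Theorem~\ref{thm: main_klt}. The gap is in how you pass to the limit in the presence of the twist: the H\"older splitting does not work.

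Decoupling $e^{-\psi}$ from $e^{-\alpha u}$ requires $e^{-p\psi}\in L^1(\mu)$. That holds only for $p$ below the integrability threshold of $\psi$, which can be arbitrarily close to $1$. So the conjugate exponent $q$ is large, and you would need $e^{-q\alpha u}\in L^1(\mu)$. This is an integrability statement for $u$ alone at a much higher exponent, and $\alpha<\alpha_\psi$ does not give it.

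A concrete local model shows this. Take $u=\log|z_1|^2$ and $\psi=b\log|z_2|^2$ on $\mathbb C^2$, with $b<1$ close to $1$. Then $e^{-\alpha u-\psi}$ is integrable for every $\alpha<1$. The H\"older route, however, needs $pb<1$ and $q\alpha<1$ with $1/p+1/q=1$, which forces $\alpha<1-b$.

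The hypothesis $\alpha<\alpha_\psi$ only controls the joint quantity $\alpha\nu(u,E)+\nu(\psi,E)$ divisor by divisor. H\"older instead needs
\[
\alpha\sup_E\frac{\nu(u,E)}{A_{X,\Delta}(E)}+\sup_E\frac{\nu(\psi,E)}{A_{X,\Delta}(E)}<1,
\]
and the two suprema can be attained on different divisors, for example when $u$ and $\psi$ are singular along transversal curves. Taking $q$ close to $1$, as you suggest, makes $p$ large and $e^{-p\psi}$ non-integrable. In general there is no balance to be found.

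The missing idea is to absorb the twist into the psh function rather than into the measure. Write $e^{-\alpha u_j-\psi}=e^{-\alpha(u_j+\alpha^{-1}\psi)}$. Locally, after adding a common smooth potential, the functions $u_j+\alpha^{-1}\psi$ are psh and converge in $L^1_{\rm loc}$ to $u+\alpha^{-1}\psi$. The smooth potential only changes the integrand by a factor bounded above and below. Theorem~\ref{thm: main_klt} then applies directly to the adapted measure $\mu$. Its only hypothesis is $\int e^{-\alpha u-\psi}\,d\mu<\infty$, and the extra room above $\alpha$ comes from the strong openness built into the theorem. This is exactly how the paper concludes.

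Your ``delicate'' integrability step for the limit also becomes immediate if you treat $e^{-\psi}\mu$ as the tame reference measure, whose log discrepancy is $A_{X,\Delta,\psi}$. Then, uniformly in $E$,
\[
\frac{\lambda'\nu(u,E)}{A_{X,\Delta,\psi}(E)}\le\frac{\lambda'\tau_{\{\theta\}}(E)}{A_{X,\Delta,\psi}(E)}\le\frac{\lambda'}{\alpha_\psi}<1,
\]
and the log valuative criterion of \cite[Appendix B]{BBJ21} applies.

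In the easy direction, the bound $\pi^*\psi\ge\nu(\psi,E)\log|z_E|-C$ is false in general and points the wrong way. What you need is the Siu upper bound $\pi^*\psi\le\nu(\psi,E)\log|z_E|+C$ near a generic point of $E$, which bounds the integrand from below.
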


When $X$ is smooth and $\theta\in c_1(L)$ for some big line bundle $L$, this was proved by Berman \cite[Proposition 7.4]{Ber13}, extending earlier results in \cite{CS08,Shi10}. For different approaches to the alpha (and delta) invariant in the case of big line bundles, we refer to \cite{JRT24, JRT25}.

\paragraph{Applications to (twisted) delta invariants.}
Staying with a normal compact K\"ahler klt pair $(X,\Delta)$, with $\Delta \geq 0$, and a big cohomology class $\{\theta\}$, let $\mathcal E^1_\theta$ denote the finite-energy class associated with $\{\theta\}$. We recall the definition of the Monge--Amp\`ere energy \cite{BEGZ10}:
$$I_\theta(u) = \frac{1}{\vol(\{\theta\}) (n+1)}\sum_{j = 0}^n \int_X (u - V_\theta) \theta_u^j  \wedge \theta_{V_\theta}^{n-j}, \ u \in \mathcal{E}^1_\theta.$$ 
Following \cite{Zhang21,Zh21}, the analytic twisted delta invariant of
$(X,\Delta,\psi)$ with respect to $\{\theta\}$ is defined by
\[
\delta_\psi^A(X,\Delta,\{\theta\})
:=
\sup\left\{
\delta>0:
\sup_{u\in\mathcal E^1_\theta}
\int_X e^{-\delta(u-I_\theta(u))}\,e^{-\psi}d\mu
<\infty
\right\}.
\]
Equivalently, $\delta_\psi^A(X,\Delta,\{\theta\})$ is the supremum of
all $\delta>0$ for which there exists a constant $C_\delta>0$ such
that
\[
\int_X e^{-\delta u -\psi}\,d\mu
\leq
C_\delta e^{-\delta I_\theta(u)}
\]
for every $u\in\mathcal E^1_\theta$. So $\delta_\psi^A(X,\Delta,\{\theta\})$ is an optimal Moser--Trudinger exponent, which governs the solvability of KE type equations (see \eqref{eq:  KE_scalar equation_gen} below).

Analogously, inspired by the works \cite{Li17,Fuj19,FO18,BJ20, BJ18}, we define the valuative twisted delta invariant by
\[
\delta_\psi(X,\Delta,\{\theta\})
:=
\inf_E
\frac{A_{X,\Delta,\psi}(E)}
     {S_{\{\theta\}}(E)},
\]
where $E$ ranges over all prime divisors over $X$, and
\[
S_{\{\theta\}}(E)
:=
\frac{1}{\operatorname{vol}(\{\theta\})}
\int_0^{\tau_{\{\theta\}}(E)}
\operatorname{vol}\!\left(
\pi^*\{\theta\}-t\{E\}
\right)\,dt.
\]
Here $\pi:Y\to X$ is a proper bimeromorphic model on which $E$ appears. The
quantity $S_{\{\theta\}}(E)$ is called the expected vanishing order, or
expected Lelong number, of $\{\theta\}$ along $E$.

Since \cite{Zhang21}, the second author initiated the program to prove $\delta=\delta^A$ in various settings. Using quantization techniques, the second author settled the case of smooth polarized manifolds \cite{Zh21}, and later in \cite{DZ24}, relying on pluripotential tools,  we managed to treat general transcendental big classes on singular spaces, but we needed to assume that certain twisted Ding functional is convex. With Theorem \ref{thm: main_klt} in our hands, we can prove the following general result, dropping all the additional restrictions.

\begin{theorem}\label{thm: delta_eq delta_A} Let $(X,\Delta)$ and $\psi$ be as above. Let $\theta$ be a smooth closed $(1,1)$-form on $X$ such that $\{\theta\}$ is a big class. Then
$$\delta_\psi(X,\Delta,\{\theta\}) = \delta^A_\psi(X,\Delta,\{\theta\}).$$
\end{theorem}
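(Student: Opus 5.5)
We prove the two inequalities $\delta^A_\psi\le\delta_\psi$ and $\delta_\psi\le\delta^A_\psi$ separately, following the strategy of \cite{DZ24}. That paper already settles the case where a certain twisted Ding functional is convex. The plan is to locate exactly where convexity (or, in \cite{Zh21}, smoothness) was used and replace that step with Theorem~\ref{thm: main_klt}.

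\emph{The easy inequality $\delta^A_\psi\le\delta_\psi$.} Fix a prime divisor $E$ over $X$, realized on a log resolution $\pi:Y\to X$. For $t<\tau_{\{\theta\}}(E)$ we use the potentials $u_t\in\PSH_\theta$ with $\nu(u_t,E)\ge t$ and minimal singularities otherwise; these come from a Kähler current in $\pi^*\{\theta\}-t\{E\}$. We then form the geodesic-ray-type potential
\[
u=\int_0^{\tau}(\text{envelope of } u_t)\,dt\in\mathcal E^1_\theta,
\]
or use the filtration ray of \cite{DZ24}. Its energy satisfies $-I_\theta(u)\approx S_{\{\theta\}}(E)$ and its Lelong number along $E$ is $\tau$, up to normalization. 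Lifting $\mu$ to $Y$ via \eqref{eq: adapted_density_res}, where $\tilde\mu$ has weight $|z_E|^{2A_{X,\Delta}(E)-2}$ along $E$, the integrability of $e^{-\delta u-\psi}$ near a general point of $E$ forces
\[
\delta\,\nu(u,E)+\nu(\psi,E)<A_{X,\Delta}(E).
\]
Scaling along the ray then gives $\delta\le A_{X,\Delta,\psi}(E)/S_{\{\theta\}}(E)$. This direction only uses valuative computations and Lelong numbers, so it works verbatim on klt pairs, as in \cite{DZ24}.

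\emph{The hard inequality $\delta_\psi\le\delta^A_\psi$.} Assume $\delta<\delta_\psi$ but the Moser--Trudinger bound fails. Then there are $u_k\in\mathcal E^1_\theta$, normalized by $\sup u_k=0$, with
\[
\int_X e^{-\delta(u_k-I_\theta(u_k))}e^{-\psi}\,d\mu\to\infty.
\]
Two cases arise.

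\begin{enumerate}
\item If $I_\theta(u_k)$ stays bounded, compactness gives $L^1$ subsequential limits $u_k\to u$. The lower bound $\delta<\delta_\psi\le\alpha_\psi$, together with Theorem~\ref{thm: alpha_eq alpha_A} or the Skoda-type bound for $\mathcal E^1$, gives uniform integrability of $e^{-\delta' u}e^{-\psi}d\mu$ for some $\delta'>\delta$. Theorem~\ref{thm: main_klt}, applied locally with the twist $e^{-\psi}$ absorbed, yields $L^1(\mu)$ convergence and hence boundedness, a contradiction.
\item If $I_\theta(u_k)\to-\infty$, we connect $V_\theta$ to $u_k$ by finite-energy geodesics and normalize by energy. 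In the limit this produces a unit-speed geodesic ray $r_t$ in $\mathcal E^1_\theta$ along which the twisted Ding functional
\[
D(r_t)=-\frac1\delta\log\int e^{-\delta r_t-\psi}\,d\mu+I_\theta(r_t)
\]
has slope $\le 0$.
\end{enumerate}

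In \cite{DZ24}, passing to the limit ray required convexity of $D$. Here we instead get lower semicontinuity of $D$ along the limiting process, and hence the slope inequality for the ray, directly from the $L^1(\mu)$ continuity of $e^{-\delta u_j}$ in Theorem~\ref{thm: main_klt}. Next, approximate $r_t$ by rays with analytic singularities, or use the multiplier ideal filtration of $r$. The slope of $D$ is then computed valuatively: the slope of the $\log\int$ term is $\inf_E\bigl(A_{X,\Delta,\psi}(E)/\delta-\text{Lelong slope}\bigr)$, again on $Y$ using \eqref{eq: adapted_density_res}. The slope of $I_\theta$ is bounded above via $S_{\{\theta\}}(E)$. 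Together these contradict $\delta<\delta_\psi$.

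\emph{Main obstacle.} The critical step is justifying the slope formula and the limit ray construction without convexity. Concretely, we must show that $t\mapsto\log\int e^{-\delta r_t-\psi}\,d\mu$ has slope controlled by the Lelong numbers of the ray at divisors over $X$. This is where \cite{DZ24} used convexity. We would first try to prove it via a Demailly approximation of the ray's singularity, combined with Theorem~\ref{thm: main_klt} and strong openness \cite{GuZh15}, to pass to the limit in the integrability exponents.
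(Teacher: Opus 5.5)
\paragraph{Verdict.} Your split into a bounded-energy case and the case $I_\theta(u_k)\to-\infty$ matches the paper, as does connecting $V_\theta$ to $u_k$ by geodesic segments of length $l_k=-I_\theta(u_k)$ and taking a Hartogs limit. However, the decisive step of Case~2 is missing, and the mechanism you propose for it does not work.

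\paragraph{Why Case~2 fails as proposed.} Applying Theorem~\ref{thm: main_klt} to $e^{-\delta u^k_t}$ at a fixed time $t$ only relates $\mathcal L^\delta(u_t)$ to $\lim_k\mathcal L^\delta(u^k_t)$. The only information you have is at the moving endpoint $t=l_k$, namely $\mathcal D^\delta(u_k)\le -k$. Controlling the intermediate values $\mathcal L^\delta(u^k_t)$ by interpolating between the endpoints is exactly what convexity did in \cite{DZ24}. Without convexity these values are uncontrolled, so no slope bound for the limit follows from fixed-time continuity.

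There are further problems. $I_\theta$ is only upper semicontinuous under $L^1$ limits, so the limit is merely a subgeodesic ray with $-t\le I_\theta(u_t)\le0$. It need not have unit speed, and it could a priori be the trivial ray $u_t\equiv V_\theta$, which you never exclude. Also, slope $\le0$ for $\mathcal D^\delta$ alone does not contradict $\delta<\delta_\psi$; you need a strictly negative slope at some exponent below $\delta_\psi$. Your ``Main obstacle'' paragraph essentially concedes that this step is open. Demailly approximation plus strong openness does not explain how the endpoint information reaches the limit.

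\paragraph{The missing idea.} Apply Theorem~\ref{thm: main_klt} to the Legendre transforms $\hat u^k_\tau=\inf_t(u^k_t-t\tau)$, $\tau\in(-1,0)$, rather than to the $u^k_t$. These are single $\theta$-psh functions that remember the endpoint: $\hat u^k_\tau\le u_k+\tau I_\theta(u_k)$. Together with $\mathcal L^\delta(u_k)\le I_\theta(u_k)$ this gives
\[
\log\int_X e^{-\delta\hat u^k_\tau-\psi}\,d\mu\ \ge\ -\delta(1+\tau)\,I_\theta(u_k)\longrightarrow\infty .
\]
By convexity in $t$ and sublinearity of the limit, after a subsequence $\hat u^k_\tau\to\hat u_\tau$ in $L^1$. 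Theorem~\ref{thm: main_klt} then forces $\int_X e^{-\delta\hat u_\tau-\psi}\,d\mu=\infty$ for every $\tau\in(-1,0)$. This has two consequences:
\begin{itemize}
\item The ray is nontrivial, because $\delta<\delta_\psi$ lies below the integrability exponent of $V_\theta$.
\item By \cite[Theorem~1.4]{DZ24}, which identifies the slope of $\mathcal L^\delta$ with the integrability threshold of $\hat u_\tau$, one gets $\mathcal D^\delta\{u_t\}\le1+\tau$, hence $\mathcal D^\delta\{u_t\}\le0$.
\end{itemize}
No valuative approximation of the ray is needed. Maximizing to a geodesic ray $\{v_t\}$ preserves these slopes. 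The comparison inequality between $\mathcal D^\lambda$ and $\mathcal D^\eta$, together with $I\{v_t\}<0$, then gives $\mathcal D^{\delta'}\{v_t\}<0$ for $\delta<\delta'<\delta_\psi$. This contradicts \cite[Theorem~1.5]{DZ24}.

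\paragraph{Two smaller errors.}
\begin{itemize}
\item In Case~1 you use $\delta_\psi\le\alpha_\psi$, but the inequality is reversed: $S_{\{\theta\}}(E)\le\tau_{\{\theta\}}(E)$ gives $\alpha_\psi\le\delta_\psi$. What Case~1 needs is only integrability of the limit. This holds because the limit lies in $\mathcal E^1_\theta$ and so has the integrability exponent of $V_\theta$.
\item In the easy direction, a finite-energy potential has the Lelong numbers of $V_\theta$. It cannot have Lelong number $\tau$ along $E$; otherwise your estimate would give $\delta^A_\psi\le\alpha_\psi$. The vanishing along $E$ lives on the Legendre transforms of the divisorial ray, and the clean argument is the easy half of \cite[Theorem~1.5]{DZ24}.
\end{itemize}
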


The geometric motivation for this equality comes from the solvability
of a natural class of twisted complex Monge--Amp\`ere equations. More
precisely, we seek a divisorial condition ensuring that
\begin{equation}
\label{eq:  KE_scalar equation_gen}
(\theta+\ddc u)^n=e^{-u-\psi}\,d\mu
\end{equation}
admits a solution $u\in\PSH_\theta$ with minimal singularity type. Here $\ddc=\frac{\sqrt{-1}\partial\bar\partial}{2\pi}$.

This equation has a natural K\"ahler--Einstein (KE) interpretation. We set 
$
\theta_u:=\theta+\ddc u.
$
With the convention $\Theta_{h_\Delta}:=-\ddc\log h_\Delta$, we introduce the twisting current  $\eta_\psi:=\eta+\ddc\psi,$ where $\eta:=-\Theta_{h_\Delta}-\theta$. 
With this terminology \eqref{eq:  KE_scalar equation_gen} is equivalent to
\[
\operatorname{Ric}(\theta_u)=
\theta_u+[\Delta]+\eta_\psi \ \textup{ on }   \ X_{\rm reg}.\]
Thus, $\theta_u$ is an $\eta_\psi$-twisted log
KE current on the pair $(X,\Delta)$. 
Even in the smooth setting, it is well known that this equation is not always solvable, and a wide range of algebraic criteria for its solvability have been developed under the umbrella of K-stability \cite{Ti97,Don02}.

An important feature of our framework is that no semipositivity assumption is imposed on the twisting current $\eta_\psi$. When $\eta_\psi\geq0$, results of Berndtsson--P\u{a}un yield  convexity of the corresponding Ding functional (\cite{BePa08,BBEGZ16}). Without this assumption, such convexity is generally unavailable, and the standard variational arguments do not directly apply.

When studying canonical metrics, it is natural to consider some type of continuity method. This is the case here as well, and we consider the following family of equations, for $\lambda > 0$:
\begin{equation}\label{eq:  KE_cont eq}
(\theta + \ddc u_\lambda)^n  = e^{-\lambda u_\lambda  - \psi} d \mu.
\end{equation}
For all sufficiently small $\lambda>0$, the Guan--Zhou openness theorem
\cite{GuZh15} gives $\int_X e^{-\lambda v-\psi}\,d\mu<\infty$
 for every $v\in\mathcal E^1_\theta,$
so the right-hand side is well defined for such $\lambda$.

Following \cite{Ding88}, one defines a functional, whose Euler-Lagrange equation is (a normalized version of) \eqref{eq:  KE_cont eq}. This is $\mathcal D^\lambda: \mathcal{E}^1_\theta \to \RR$, the $\lambda$-Ding functional:
\begin{flalign}\label{def: twisted_Ding}
\mathcal D^\lambda(\varphi): = \mathcal L^\lambda(\varphi) -I_\theta(\varphi):=-\frac{1}{\lambda}\log\int_Xe^{-\lambda\varphi -\psi} \mathrm{d}\mu-I_\theta(\varphi), \ \ \ \ \varphi\in\mathcal{E}^1_\theta.
\end{flalign}

The variational theory of complex Monge--Amp\`ere equations shows that minimizers of $\mathcal D^\lambda$ on $\mathcal E^1_\theta$ solve (the normalized form of) \eqref{eq:  KE_cont eq} \cite{BBGZ13,BBEGZ16}. Moreover, by its definition, $\delta_\psi^A(X,\Delta,{\theta})$ is precisely the threshold governing the boundedness from below of $\mathcal D^\lambda$.

On the other hand,  \cite[Theorem 1.5]{DZ24} gives that the divisorial
invariant $\delta_\psi(X,\Delta,\{\theta\})$ agrees with the geodesic
semistability threshold of $\mathcal D^\lambda$:
\[
\delta_\psi(X,\Delta, \{\theta\}) 
=
\sup\left\{
\lambda>0
\;\middle|\;
\liminf_{t\to\infty} \frac{\mathcal D^\lambda(u_t)}{t}\geq 0, \forall
\text{  sublinear subgeodesic ray }
\{u_t\}\subset \mathcal{E}^1_\theta
\right\}.
\]

For a discussion of (sub)geodesic rays $[0,\infty) \ni t \to u_t \in \mathcal E^1_\theta$,  we refer to \eqref{eq:  subgeod} and \cite[Section 3.3]{DZ24}. We introduce the following notation for the slope of the $\lambda$-Ding energy along such rays, appearing in the above identity:
$$\mathcal D^\lambda \{u_t\}:=\liminf_{t\to\infty} \frac{\mathcal D^\lambda(u_t)}{t}.$$

By the above, Theorem \ref{thm: delta_eq delta_A} will be an immediate consequence of the
following result.
\begin{theorem}
\label{thm:  non_bound_imply_destab_ray} Let $0< \lambda < \delta_\psi(X,\Delta,\{\theta\})$  and assume that $\mathcal{D}^\lambda: \mathcal E^1_\theta \to \Bbb R$ is not bounded from below. Then there exists a non-trivial finite energy geodesic ray $\{v_t\}\in\mathcal{R}^1_\theta$ such that $\mathcal D^{\lambda+ \varepsilon}\{v_t\}<0$ for any  $0< \lambda+\varepsilon  < \delta_\psi(X,\Delta,\{\theta\})$.
\end{theorem}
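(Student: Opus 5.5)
We will follow the Darvas--He / Berman--Boucksom--Jonsson strategy for producing a destabilizing ray from non-coercivity, as adapted in \cite{DZ24}. The only step of \cite{DZ24} that used convexity of the Ding functional is the lower semicontinuity of $\mathcal L^{\lambda'}$ along the limiting ray. Here that input will be replaced by Theorem \ref{thm: main_klt}.

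First, since $\mathcal D^\lambda$ is unbounded below, we pick $\varphi_j\in\mathcal E^1_\theta$ with $\sup_X\varphi_j=0$ and $\mathcal D^\lambda(\varphi_j)\to-\infty$. Jensen's inequality applied to the probability measure $e^{-\psi}d\mu$, together with $\sup_X \varphi_j = 0$, gives
\[
\mathcal L^\lambda(\varphi_j)\ge -C,
\]
since $\int_X\varphi_j e^{-\psi}d\mu$ is uniformly bounded for sup-normalized $\theta$-psh functions (compactness). Hence $d_j:=-I_\theta(\varphi_j)\to\infty$ and
\[
\mathcal D^\lambda(\varphi_j)\le \delta_j\, d_j \quad\text{with } \delta_j\to 0^{+}.
\]
Actually the cleaner normalization is $\mathcal D^\lambda(\varphi_j)\le -\epsilon_j$ with $\epsilon_j\to\infty$; we may assume $\mathcal D^\lambda(\varphi_j)< \mathcal D^\lambda(V_\theta)$.

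We join $V_\theta$ to $\varphi_j$ by the finite energy geodesic $[0,d_j]\ni t\mapsto u^j_t$, parametrized so that $I_\theta(u^j_t)=-t$. Since $I_\theta$ is affine along geodesics and $\mathcal L^\lambda$ is bounded below, the curves $u^j_t$ are $d_1$-Lipschitz on bounded intervals with uniform constants. By the compactness of sup-normalized finite energy potentials with bounded energy and a diagonal argument, after passing to a subsequence $u^j_t\to v_t$ in $L^1$ (indeed in $d_1$-weakly, with $I_\theta(v_t)\ge -t$ by upper semicontinuity) for every rational $t$. The limit extends to a sublinear subgeodesic, and after replacing it by its geodesic envelope, to a finite energy geodesic ray $\{v_t\}\in\mathcal R^1_\theta$.

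The key estimate is as follows. Although $\mathcal D^\lambda$ need not be convex along $u^j_t$, we do not need convexity of $\mathcal D^\lambda$ itself. We use instead that $t\mapsto \mathcal L^{\lambda}$ of a geodesic is controlled by comparing against the endpoint via the twisted Berndtsson-type inequality only for the positive part; equivalently, we use the splitting $e^{-\psi}d\mu = e^{-\psi^+}\cdot e^{\psi^-}$ and absorb the non-positive part of the twist into a change of exponent $\lambda\to\lambda+\varepsilon$ via Hölder. Concretely, for $0<\lambda<\lambda'<\delta_\psi$ we aim to show
\[
\mathcal D^{\lambda'}(u^j_t)\le \frac{t}{d_j}\,\mathcal D^\lambda(\varphi_j)+C,
\]
exactly as in \cite[Section 4]{DZ24}, where this estimate was derived from a Hölder interpolation between $V_\theta$ and $\varphi_j$ using only the definition of $\delta$ and not convexity. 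Given this, we take $j\to\infty$ at fixed $t$. The right-hand side has negative limsup slope, say at most $-c\,t$ after normalization.

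We then need
\[
\mathcal L^{\lambda'}(v_t)\le\liminf_j \mathcal L^{\lambda'}(u^j_t),
\]
that is, $\int_X e^{-\lambda' v_t-\psi}d\mu\ge\limsup_j\int_X e^{-\lambda' u^j_t-\psi}d\mu$. This is precisely the upper semicontinuity of $\int e^{-\lambda'u}\,d\mu$ under $L^1$ convergence, which Theorem \ref{thm: main_klt} provides. Applied locally on a finite cover of compact $X$, with $\psi$ absorbed by Hölder (or by passing to a local potential and using that $e^{-\psi}d\mu$ is locally of the same form, since $\psi$ is qpsh and $\lambda'<\delta_\psi\le\alpha_\psi$ keeps the relevant integrals finite by strong openness), it even gives $L^1$ convergence of $e^{-\lambda' u^j_t}$. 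Combining this with the upper semicontinuity of $I_\theta$ yields
\[
\mathcal D^{\lambda'}(v_t)\le -c\,t+C,
\]
so $\mathcal D^{\lambda'}\{v_t\}<0$, and the ray is non-trivial since $I_\theta(v_t)\to-\infty$ linearly.

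The main obstacle is the uniform finiteness needed to apply Theorem \ref{thm: main_klt}. We must know $\int_X e^{-\lambda'' v_t-\psi}d\mu<\infty$ for some $\lambda''>\lambda'$ in order to land in the continuity regime. We expect to get this from $\lambda''<\delta_\psi\le$ the valuative alpha threshold of $v_t$ via Guan--Zhou and the Lelong number bound for finite energy potentials. Handling the twist $\psi$ with negative part is where care is needed.
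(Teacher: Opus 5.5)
There is a genuine gap, and it sits at the step you call the key estimate. Since $I_\theta(u^j_t)=-t$, the inequality $\mathcal D^{\lambda'}(u^j_t)\le\frac{t}{d_j}\mathcal D^\lambda(\varphi_j)+C$ is equivalent to
\[
\int_X e^{-\lambda' u^j_t-\psi}\,d\mu\ \ge\ e^{-\lambda' C}\Bigl(\int_X e^{-\lambda\varphi_j-\psi}\,d\mu\Bigr)^{\frac{\lambda' t}{\lambda d_j}},
\]
which is a lower bound at interior times of the segment in terms of the far endpoint. This is a log-concavity (Berndtsson-type convexity) property of the Ding functional along geodesics, and it requires $\eta_\psi\ge0$. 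It is precisely the convexity hypothesis that \cite{DZ24} had to impose; the problem there was not lower semicontinuity, as you suggest. It also does not follow from H\"older. Using $u^j_t\le(1-\frac{t}{d_j})V_\theta+\frac{t}{d_j}\varphi_j$ and $V_\theta\le0$, you are reduced to bounding $\int_X e^{-\lambda' s\varphi_j-\psi}d\mu$ from below, with $s=t/d_j$. But H\"older and Jensen only give $\int_X e^{-\lambda' s\varphi_j-\psi}d\mu\le\bigl(\int_X e^{-\lambda\varphi_j-\psi}d\mu\bigr)^{\lambda' s/\lambda}$, which is the reverse direction. The interpolation in \cite{DZ24}, eq.~(41), is in the exponent at a fixed potential, not in $t$. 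Without this estimate, your use of Theorem~\ref{thm: main_klt} at fixed $t$ carries no information. The blow-up of $\int e^{-\lambda\varphi_j-\psi}d\mu$ lives at $t=d_j\to\infty$, and $u^j_t$ for bounded $t$ never sees it.

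The missing idea, which is how the paper avoids convexity, is to transport the endpoint information to a fixed parameter via Legendre transforms. For $\tau\in(-1,0)$ one has $\hat u^j_\tau\le\varphi_j+\tau I_\theta(\varphi_j)$. Using $\mathcal L^\lambda(\varphi_j)\le I_\theta(\varphi_j)$, this gives
\[
\log\int_X e^{-\lambda\hat u^j_\tau-\psi}\,d\mu\ \ge\ -\lambda\mathcal L^\lambda(\varphi_j)-\lambda\tau I_\theta(\varphi_j)\ \ge\ -\lambda(1+\tau)I_\theta(\varphi_j)\to\infty .
\]
Convexity of $t\mapsto u^j_t(x)$ and sublinearity give $\hat u^j_\tau\to\hat u_\tau$ in $L^1$. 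Theorem~\ref{thm: main_klt}, used contrapositively (strong openness plus continuity), then forces $\int_X e^{-\lambda\hat u_\tau-\psi}d\mu=\infty$. The slope formula \cite[Theorem~1.4]{DZ24}, which involves only the limit ray, yields $\mathcal D^\lambda\{u_t\}\le1+\tau$, hence $\le0$. One then maximizes the ray; slopes are preserved by \cite[Lemma~5.5]{DXZ25}, a point you also skip when passing to the geodesic envelope. Strict negativity at $\lambda+\varepsilon$ follows from the exponent-comparison inequality.

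There are also two smaller errors.
\begin{itemize}
\item Jensen gives $\mathcal L^\lambda(\varphi)\le\int_X\varphi\,e^{-\psi}d\mu$, an upper bound, not a lower one. In fact a uniform lower bound on $\mathcal L^\lambda(\varphi_j)$ would contradict $\mathcal D^\lambda(\varphi_j)\to-\infty$, because $-I_\theta\ge0$ gives $\mathcal D^\lambda\ge\mathcal L^\lambda$ on sup-normalized potentials. The fact $d_j\to\infty$ must instead come from boundedness below of $\mathcal L^\lambda$ on $d_1$-bounded sup-normalized sets, via compactness plus Theorem~\ref{thm: main_klt}.
\item The inequality $\delta_\psi\le\alpha_\psi$ is reversed: $S_{\{\theta\}}\le\tau_{\{\theta\}}$ gives $\delta_\psi\ge\alpha_\psi$. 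The integrability needed for finite energy potentials comes from $\lambda<\delta_\psi\le c_{(e^{-\psi}\mu,X)}[V_\theta]$, not from $\alpha_\psi$.
\end{itemize}
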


As a geometric consequence we obtain a divisorial YTD type existence criterion for KE metrics in big cohomology classes, without a semipositivity assumption on the twist:

\begin{corollary} Suppose that $0< \lambda < \delta_\psi(X,\Delta,{\theta})$. Then \eqref{eq:  KE_cont eq} has a solution $u_\lambda \in \PSH_\theta$ with minimal singularity. In particular, if $\delta_\psi(X,\Delta,{\theta})>1$ then 
\eqref{eq:  KE_scalar equation_gen} is solvable, hence a twisted KE current $\theta_{u}$ with minimal singularity exists.
\end{corollary}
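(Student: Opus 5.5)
The corollary follows by combining Theorem~\ref{thm:  non_bound_imply_destab_ray} with the characterization of $\delta_\psi$ as a geodesic semistability threshold from \cite[Theorem 1.5]{DZ24}, and then invoking the variational method of \cite{BBGZ13,BBEGZ16}.

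\emph{Step 1: $\mathcal D^\lambda$ is bounded below.} Fix $0<\lambda<\delta_\psi(X,\Delta,\{\theta\})$ and suppose $\mathcal D^\lambda$ is not bounded from below. Theorem~\ref{thm:  non_bound_imply_destab_ray} then produces a non-trivial finite energy geodesic ray $\{v_t\}$ with $\mathcal D^{\lambda'}\{v_t\}<0$ for every $\lambda'<\delta_\psi$ sufficiently close to $\lambda$. Take such a $\lambda'$. Since $\{v_t\}$ is sublinear, the displayed identity from \cite[Theorem 1.5]{DZ24} gives $\mathcal D^{\lambda'}\{v_t\}\geq 0$, a contradiction.

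\emph{Step 2: $\mathcal D^\lambda$ is coercive.} Boundedness below is not quite enough to produce minimizers, so I would upgrade it to properness by sliding the exponent. Choose $\lambda<\lambda'<\delta_\psi$. By Step 1 applied at $\lambda'$, $\mathcal D^{\lambda'}\geq -C$. Normalize $\sup_X\varphi=0$, so that $-I_\theta(\varphi)\approx J$-type energy and $I_\theta(\varphi)\leq 0$. Hölder's inequality gives
\[
\mathcal L^\lambda(\varphi)\geq \mathcal L^{\lambda'}(\varphi)-O(1).
\]
Combining this with the convex interpolation $\mathcal L^{\lambda}\ge \tfrac{\lambda}{\lambda'}\mathcal L^{\lambda'}+(1-\tfrac{\lambda}{\lambda'})\mathcal L^0$-type bounds, as in \cite{Zh21,DZ24}, should yield
\[
\mathcal D^\lambda(\varphi)\geq \epsilon\,(-I_\theta(\varphi))-C
\]
for some $\epsilon>0$ on sup-normalized potentials. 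This scaling argument is standard, and I would essentially cite the corresponding step in \cite{DZ24}, which only uses boundedness of $\mathcal D^{\lambda'}$ for a slightly larger exponent.

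\emph{Step 3: existence and regularity.} By coercivity, a minimizing sequence has bounded energy and is therefore $L^1$-compact in $\mathcal E^1_\theta$. The energy $I_\theta$ is upper semicontinuous. The functional $\mathcal L^\lambda$ is continuous along $L^1$-convergent sequences in $\mathcal E^1_\theta$ by Theorem~\ref{thm: main_klt}: the hypothesis \eqref{eq: main_integral_bound} holds by openness, since $\lambda<\delta^A_\psi$ gives uniform integrability. Hence a minimizer exists, and by \cite{BBGZ13,BBEGZ16} it solves \eqref{eq:  KE_cont eq} after adding a constant. The density $e^{-\lambda u-\psi}$ lies in $L^p(\mu)$ for some $p>1$, again by strong openness and $u\in\mathcal E^1_\theta$, and $\mu$ has $L^{p'}$ density with respect to a smooth volume on a resolution (klt). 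Kołodziej-type $L^\infty$ estimates in big classes \cite{BEGZ10} then give that $u_\lambda$ has minimal singularity.

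\emph{Step 4: the case $\delta_\psi>1$.} If $\delta_\psi(X,\Delta,\{\theta\})>1$, take $\lambda=1$ to obtain a solution of \eqref{eq:  KE_scalar equation_gen}.

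The main obstacle I expect is Step 2, the passage from mere boundedness to coercivity without convexity of the Ding functional. The key point is that Theorem~\ref{thm:  non_bound_imply_destab_ray} controls all $\lambda'<\delta_\psi$, so the slack between $\lambda$ and $\lambda'$ can be converted into a definite multiple of the energy.
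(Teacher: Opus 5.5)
\textbf{The gap is in Step 2.} Your Steps 1, 3 and 4 follow the paper's proof. Step 1 is exactly how Theorem~\ref{thm: delta_eq delta_A} gives boundedness of $\mathcal D^{\lambda'}$ for $\lambda'<\delta_\psi$. Step 3 uses the same ingredients as the paper: $L^1$-continuity of $\mathcal L^\lambda$ from Theorem~\ref{thm: main_klt}, the variational principle \cite[Proposition 5.2]{DZ24}, and \cite[Theorem B]{BEGZ10}.

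You rightly flag Step 2 as the crux, but the argument you sketch does not work. The ``convex interpolation'' $\mathcal L^\lambda\ge\frac{\lambda}{\lambda'}\mathcal L^{\lambda'}+(1-\frac{\lambda}{\lambda'})\mathcal L^0$ is not a consequence of H\"older or convexity, and it is false for general weights. Convexity of $s\mapsto\log\int_X e^{-s\varphi}e^{-\psi}d\mu$, which vanishes at $s=0$, only shows that $s\mapsto\mathcal L^s(\varphi)$ is decreasing. The function $s\mapsto-\mathcal L^s(\varphi)$ need not be convex: if $\varphi=-M$ on a set of mass $p<1$ and $0$ elsewhere, then $-\mathcal L^s(\varphi)\approx M+s^{-1}\log p$ for large $M$, which lies above the chord from $s=0$.

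More fundamentally, boundedness of $\mathcal D^{\lambda'}$ plus the slack $\lambda<\lambda'$ only gives $\mathcal D^\lambda\ge\mathcal D^{\lambda'}\ge -C$, with no gain in energy. So the step in \cite{DZ24} does not ``only use boundedness of $\mathcal D^{\lambda'}$ for a slightly larger exponent.''

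\textbf{The missing ingredient is the alpha invariant,} which provides a uniform anchor at a small positive exponent. Fix $\alpha\in(0,\min\{\lambda,\alpha_\psi(X,\Delta,\{\theta\})\})$. By Theorem~\ref{thm: alpha_eq alpha_A} there is $C_\alpha$ with $\int_X e^{-\alpha\varphi-\psi}d\mu\le C_\alpha$ for all $\varphi\in\PSH_\theta$ with $\sup_X\varphi=0$. Write $\lambda=a\alpha+b\lambda'$ with $a=\frac{\lambda'-\lambda}{\lambda'-\alpha}$ and $b=\frac{\lambda-\alpha}{\lambda'-\alpha}$. H\"older then gives $\lambda\mathcal L^\lambda\ge a\alpha\mathcal L^\alpha+b\lambda'\mathcal L^{\lambda'}$. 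This is the comparison lemma stated just before the corollary (taken from \cite[eq.~(41)]{DZ24}), with $\eta=\lambda'$:
\[
\mathcal D^\lambda(\varphi)\ge\frac{\lambda'(\lambda-\alpha)}{\lambda(\lambda'-\alpha)}\mathcal D^{\lambda'}(\varphi)+\frac{\alpha(\lambda'-\lambda)}{\lambda(\lambda'-\alpha)}\,d_1(V_\theta,\varphi)-C,\qquad \sup_X\varphi=0.
\]
The coefficient of $d_1(V_\theta,\varphi)=-I_\theta(\varphi)$ is positive precisely because $\alpha>0$ and $\lambda<\lambda'$. The coefficient of $\mathcal D^{\lambda'}$ is positive because $\alpha<\lambda$, so the lower bound on $\mathcal D^{\lambda'}$ transfers. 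This yields $d_1$-coercivity of $\mathcal D^\lambda$. With this replacement for Step 2, your argument coincides with the paper's.
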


For Fano manifolds, when $\{\theta\}=c_1(-K_X)$, YTD type existence theorems have been established using the continuity method and Cheeger--Colding--Tian theory \cite{CDS15,DS16a,Tian15,TiWa20}, the K\"ahler--Ricci flow \cite{CSW}, and variational and non-Archimedean methods \cite{BBJ21}. The second author subsequently gave a more elementary proof based on K\"ahler quantization \cite{Zh21}.

For singular log Fano pairs, Yau--Tian--Donaldson-type existence results were established in \cite{Li20YTD,LTW21a,LTW21b,LXZ22}, building on the pluripotential and variational foundations for singular KE metrics developed in \cite{BBEGZ16}. Related stability questions in the setting where the anticanonical class is merely big were subsequently studied in \cite{DeRe22,DZ24,Xu22}. These works concern projective varieties and therefore have access to techniques from the minimal model program \cite{BCHM10}. By contrast, our results above require no positivity assumptions on $-(K_X+\Delta)$, nor projectivity of $X$.

As pointed out in \cite{DZ24}, the condition
$
\delta_\psi(X,\Delta,{\theta})>1$ is unlikely to be necessary for the existence of twisted KE  metrics in full generality. We nevertheless expect it to be necessary when $\eta_\psi$ is strictly positive, a compelling open problem, having roots in  \cite{Bern15}. We refer to \cite{DZZ} for some recent progress in this direction.

\paragraph{Application to local alpha invariants and normalized volumes.}
Let $X\subset\mathbb C^N$ be an irreducible complex space of pure
dimension $n$ with an isolated log terminal singularity at $p\in X$.
We fix a strongly pseudoconvex neighborhood
$p\in\Omega\Subset X$ such that $K_\Omega$ is $\mathbb Q$-Cartier and
$rK_\Omega$ is trivial for some $r\in\mathbb N$, and let $\mu_p$ be
an adapted probability measure associated with a flat metric $h$ on $K_\Omega$.

In their study of positively curved KE metrics near
isolated log terminal singularities, Guedj and Trusiani \cite{GT23}
introduced the following alpha type invariants
\[
\alpha(X,p)
:=
\sup\left\{
\lambda>0:
\sup_{u\in\mathcal F_1(\Omega)}
\int_\Omega e^{-\lambda u}\,d\mu_p<\infty
\right\}
\]
\[
\widetilde{\alpha}(X,p)
:=
\inf_{u\in\mathcal F_1(\Omega)}
c_{\mu_p,\Omega}[u],
\]
where $ c_{\mu_p,\Omega}[u]
:=
\sup\left\{
c>0:
\int_\Omega e^{-cu}\,d\mu_p<\infty
\right\},$ is the global integrability exponent of $u$ on $\Omega$. See \eqref{eq: F1__local} for the precise definition of the class
$\mathcal F_1(\Omega)\subset\PSH(\Omega)$. They further proved that $\widetilde\alpha(X,p)$ is independent of the choice of
$\Omega$ and that
\[
\widetilde\alpha(X,p)
=
\widehat{\mathrm{vol}}(X,p)^{1/n}.
\]
Here $\widehat{\operatorname{vol}}(X,p)$ denotes the normalized volume of the singularity $(X,p)$, introduced by Chi Li as an algebro geometric extension of the volume-minimization principle of Martelli--Sparks--Yau in Sasaki--Einstein geometry \cite{MSY08,Li18NormalizedVolume}. It is defined by
\[
\widehat{\operatorname{vol}}(X,p)
:=
\inf_{v\in\operatorname{Val}_{X,p}}
A_X(v)^n\operatorname{vol}(v),
\]
where $\operatorname{Val}_{X,p}$ denotes the space of real valuations centered at $p$. For further background and discussion, we refer to \cite{GT23}.

This concept is closely connected with local
K-stability. Blum proved that the infimum in the definition of $\widehat{\operatorname{vol}}(X,p)$ is attained \cite{Blum18}. Li and Xu related minimizing valuations to K-semistable Koll\'ar components and log Fano cones \cite{LiXu20,LiXuHigherRank}. Subsequently, Xu and Zhuang established uniqueness of the minimizer up to rescaling and completed the proof of the stable degeneration conjecture \cite{XuZhuang21,XuZhuang22}.

Guedj--Trusiani identified the absence of a singular Demailly--Koll\'ar continuity
theorem as the main obstruction to proving
the equality
$
\alpha(X,p)=\widetilde\alpha(X,p)
$
in general \cite{GT23}. Our first main theorem provides precisely this missing
piece and yields the following result.

\begin{theorem}
\label{thm: local__alpha_intro}
Let $(X,p)$ be an isolated log terminal singularity as above. Then
\[
\alpha(X,p)
=
\widetilde{\alpha}(X,p)
=
\widehat{\operatorname{vol}}(X,p)^{1/n}.
\]
\end{theorem}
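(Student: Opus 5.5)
Since Guedj--Trusiani \cite{GT23} already proved $\widetilde\alpha(X,p)=\widehat{\operatorname{vol}}(X,p)^{1/n}$, it suffices to show $\alpha(X,p)=\widetilde\alpha(X,p)$. The inequality $\alpha(X,p)\leq\widetilde\alpha(X,p)$ is immediate from the definitions. If $\lambda<\alpha(X,p)$, then $\int_\Omega e^{-\lambda u}d\mu_p\leq C_\lambda$ for all $u\in\mathcal F_1(\Omega)$, so $c_{\mu_p,\Omega}[u]\geq\lambda$ for each $u$. The content is therefore the reverse inequality $\alpha(X,p)\geq\widetilde\alpha(X,p)$: a uniform integrability bound must follow from pointwise integrability of each member of the family. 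I would argue by contradiction, in the spirit of the classical compactness proof that Tian's $\alpha$ equals the global lct \cite{CS08,Shi10}.

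Fix $\lambda<\widetilde\alpha(X,p)$ and suppose there are $u_j\in\mathcal F_1(\Omega)$ with $\int_\Omega e^{-\lambda u_j}d\mu_p\to\infty$. First, I use compactness of $\mathcal F_1(\Omega)$, which Guedj--Trusiani establish (it is the normalized class of psh functions with fixed Monge--Amp\`ere mass/boundary behaviour, compact in $L^1_{\rm loc}$). After passing to a subsequence, $u_j\to u\in\mathcal F_1(\Omega)$ in $L^1_{\rm loc}(\Omega)$. Second, choose $\lambda<\lambda'<\widetilde\alpha(X,p)\leq c_{\mu_p,\Omega}[u]$, so that $e^{-\lambda' u}\in L^1(\Omega,\mu_p)$. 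Third, apply Theorem~\ref{thm: main_klt} with $\Delta=0$ (the pair $(\Omega,0)$ is klt and $\mu_p$ is adapted), with a compact $K\Subset\Omega$ and exponent $\lambda'$. This yields a neighborhood $W\supset K$ on which $e^{-\lambda u_j}\to e^{-\lambda u}$ in $L^1(W,\mu_p)$; interpolating via Theorem~\ref{thm: main_klt} applied directly with $\lambda$ gives the same conclusion. Hence $\int_W e^{-\lambda u_j}d\mu_p$ is uniformly bounded.

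The main obstacle is the region $\Omega\setminus K$ near $\partial\Omega$, since Theorem~\ref{thm: main_klt} is purely local and interior. Away from $p$, the space is smooth and $\mu_p$ is comparable to a smooth volume form, so a uniform bound there needs only a uniform local Skoda estimate. By Skoda's theorem this reduces to a uniform bound on Lelong numbers $\nu(u_j,x)<2/\lambda$ for $x$ near $\partial\Omega$, together with a uniform control of $u_j$ near the boundary.

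For this I would use the structure of $\mathcal F_1(\Omega)$: these functions have zero boundary values and total Monge--Amp\`ere mass at most $1$. The mass bound gives $\nu(u_j,x)^n\leq C$ uniformly at regular points by Demailly's comparison. I would then try to refine this using the boundary condition, for instance by comparing with a strictly psh exhaustion $\rho$ of the strongly pseudoconvex $\Omega$ to push Lelong numbers and the sup-norm defect to zero near $\partial\Omega$. If that is too weak, a more robust fallback is to apply the argument on a slightly larger strongly pseudoconvex $\Omega'\Supset\Omega$ and extend the $u_j$ via maximal envelopes. Alternatively, one can exploit the independence of $\widetilde\alpha$ from $\Omega$ proven in \cite{GT23}, so that $\overline\Omega$ itself sits as a compact $K$ inside a bigger domain. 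Once the boundary region is controlled, combining it with the interior bound on $W$ contradicts $\int_\Omega e^{-\lambda u_j}d\mu_p\to\infty$. This gives $\alpha(X,p)\geq\lambda$, and letting $\lambda\uparrow\widetilde\alpha(X,p)$ concludes.
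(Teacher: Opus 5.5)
Your overall scheme (contradiction, compactness of $\mathcal F_1$, integrability of the limit, Theorem~\ref{thm: main_klt}) matches the paper's, and you correctly identify $\partial\Omega$ as the crux. But that step is never carried out, and your main plan for it fails. Lelong numbers of members of $\mathcal F_1(\Omega)$ cannot be pushed to zero near $\partial\Omega$. The pluricomplex Green functions of $\Omega$ with poles at regular points $a\to\partial\Omega$ all lie in $\mathcal F_1(\Omega)$ and have a pole of type $\log|z-a|^2$. That is Lelong number $2$ in the convention where your Skoda criterion reads $\nu<2/\lambda$, which is also the normalization in which a regular point has $\widetilde\alpha=n$. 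For these functions Skoda certifies only $\lambda<1$. You need every $\lambda<\widetilde\alpha(X,p)=\widehat{\operatorname{vol}}(X,p)^{1/n}$, and this exceeds $1$ in general (e.g.\ $\mathbb C^n/G$ with $|G|<n^n$). More fundamentally, the $u_j$ are not defined on a full neighborhood of boundary points. So no interior uniform estimate (Skoda, Demailly--Koll\'ar, or Theorem~\ref{thm: main_klt} on $\Omega$) can control $\Omega\setminus K$.

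Your fallback is exactly what the paper does. However, the two items you offer as alternatives must be used together, and the point that makes them work is missing.
\begin{itemize}
\item After reducing to $u_j\in\mathcal T_0(\Omega)$, the paper takes $\Omega'=\{\rho<\delta\}\Supset\Omega$ and $v_j:=\sup\{v\in\PSH(\Omega'): v\le0,\ v|_\Omega\le u_j\}$.
\item One has $A(\rho-\delta)\le v_j$, so $v_j$ has zero boundary values on $\partial\Omega'$.
\item $(\ddc v_j)^n$ is carried by the contact set $\{v_j=u_j\}$ and is dominated by $(\ddc u_j)^n$, so $v_j\in\mathcal F_1(\Omega')$. This mass bound is what an arbitrary extension lacks; without it the limit's integrability exponent is uncontrolled.
\item $v_j\le u_j$ on $\Omega$, so $\int_\Omega e^{-\lambda v_j}\,d\mu_p\to\infty$.
\item Compactness of $\mathcal F_1(\Omega')$ gives $v_j\to v\in\mathcal F_1(\Omega')$. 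Only here is the domain-independence of Guedj--Trusiani needed: $c_{\mu_p,\Omega'}[v]\ge\widetilde\alpha(X,\Omega',p)=\widetilde\alpha(X,\Omega,p)>\lambda$.
\item Theorem~\ref{thm: main_klt} on $\Omega'$ with $K=\overline\Omega$ then gives $e^{-\lambda v_j}\to e^{-\lambda v}$ in $L^1(\Omega,\mu_p)$, which is the contradiction.
\end{itemize}
With this argument, the interior/boundary splitting and any Skoda-type estimate become unnecessary.
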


As pointed out in \cite{GT23}, the significance of these invariants stems from their connection with
KE metrics near the singularity $p \in X$. More precisely, for
$\gamma>0$, we consider solutions
\[
u\in\operatorname{PSH}(\Omega)
\cap C^\infty\bigl(\Omega\setminus\{p\}\bigr)
\cap C\bigl(\overline{\Omega}\setminus\{p\}\bigr)
\]
of the Dirichlet problem
\begin{equation}
\label{eq: KE_scalar_local}
\begin{cases}
(\ddc u)^n=\frac{1}{\int_\Omega e^{-\gamma u}\,d\mu_p}e^{-\gamma u}\,d\mu_p & \text{in }\Omega,\\
u=0 & \text{on }\partial\Omega.
\end{cases}
\end{equation}
Geometrically, such a solution determines a singular K\"ahler metric
$\ddc u$
on $\Omega\setminus\{p\}$. Since the adapted measure $d\mu_p$ is
Ricci-flat on $\Omega\setminus\{p\}$, equation
\eqref{eq: KE_scalar_local} is equivalent there to the
KE equation
\[
\operatorname{Ric}(\ddc u)=\gamma \ \ddc u.
\]

Combining Theorem~\ref{thm: local__alpha_intro} with
\cite[Theorems~C and~3.5]{GT23} immediately yields the following
algebro-geometric existence criterion for KE metrics in this setting.

\begin{corollary}
\label{cor: local_KE existence}
Let $(X,p)$ be an isolated log terminal singularity as above. Then the
Dirichlet problem \eqref{eq: KE_scalar_local} admits a solution if 
\[
\gamma
<
\frac{n+1}{n}\,
\widetilde{\alpha}(X,p)=\frac{n+1}{n}\,
\alpha(X,p)=\frac{n+1}{n}\widehat{\operatorname{vol}}(X,p)^{1/n}.
\]
\end{corollary}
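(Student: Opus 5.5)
The corollary is stated as an immediate consequence of Theorem~\ref{thm: local__alpha_intro} combined with \cite[Theorems~C and~3.5]{GT23}, so the plan is a short reduction rather than a new argument. The key input from Guedj--Trusiani is their existence result for \eqref{eq: KE_scalar_local}. It is proved variationally: one minimizes the local Ding-type functional
\[
\mathcal F_\gamma(u)=\frac{1}{\gamma}\log\int_\Omega e^{-\gamma u}\,d\mu_p + E(u)
\]
on $\mathcal F_1(\Omega)$, normalized with $u=0$ on $\partial\Omega$ and $E$ the local Monge--Amp\`ere energy. This functional is coercive provided $\gamma<\frac{n+1}{n}\alpha(X,p)$. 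The factor $\frac{n+1}{n}$ arises from comparing $\sup_\Omega(-u)$ with the energy via the standard local inequality $-E(u)\gtrsim \frac{n}{n+1}\cdot$(normalized scale of $u$). As I read \cite[Theorem~C]{GT23}, it is stated in terms of $\alpha(X,p)$, the uniform integrability exponent: uniformity over $\mathcal F_1(\Omega)$ is exactly what gives coercivity. \cite[Theorem~3.5]{GT23} then supplies regularity of the minimizer: smooth on $\Omega\setminus\{p\}$ and continuous up to $\partial\Omega$ away from $p$.

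The steps are therefore as follows.

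First, I would recall that \cite{GT23} proves solvability of \eqref{eq: KE_scalar_local} whenever $\gamma<\frac{n+1}{n}\alpha(X,p)$, with the stated regularity of the solution.

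Second, I would invoke Theorem~\ref{thm: local__alpha_intro}, which gives
\[
\alpha(X,p)=\widetilde\alpha(X,p)=\widehat{\operatorname{vol}}(X,p)^{1/n}.
\]
Here $\widetilde\alpha(X,p)=\widehat{\operatorname{vol}}(X,p)^{1/n}$ is already known from \cite{GT23}, and the new content is the first equality. Substituting this into the hypothesis converts the analytic threshold into the algebraic one.

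The main difficulty lies entirely in Theorem~\ref{thm: local__alpha_intro}, specifically in the inequality $\alpha\ge\widetilde\alpha$; the reverse inequality is trivial. The natural argument is by contradiction. Suppose there are $u_j\in\mathcal F_1(\Omega)$ with $\int_\Omega e^{-\lambda u_j}\,d\mu_p\to\infty$ for some $\lambda<\widetilde\alpha$. By compactness of $\mathcal F_1$, pass to an $L^1$ limit $u$. Then apply Theorem~\ref{thm: main_klt}, with $\Delta=0$ and a compact set $K$ exhausting $\Omega$ suitably, to get $\int e^{-\lambda u_j}\,d\mu_p\to\int e^{-\lambda u}\,d\mu_p<\infty$. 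Near $\partial\Omega$, where $X$ is smooth, the boundary behaviour of the class $\mathcal F_1$ is handled by classical arguments.

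For the corollary itself I expect no obstacle beyond checking that the normalizations of $\mu_p$ and $\mathcal F_1$ used here match those in \cite{GT23}.
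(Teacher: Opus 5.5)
Your reduction is the same as the paper's. The corollary follows by combining the Guedj--Trusiani existence result for $\gamma<\frac{n+1}{n}\alpha(X,p)$ \cite[Theorems~C and~3.5]{GT23} with Theorem~\ref{thm: local__alpha_intro}, which may simply be cited here.

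Your side sketch of that theorem is loose at $\partial\Omega$, because Theorem~\ref{thm: main_klt} only controls neighborhoods of compact sets $K\Subset\Omega$. The paper avoids any boundary analysis: it replaces $u_j$ by the envelope $v_j\le u_j$, which lies in $\mathcal F_1(\Omega')$ for a larger domain $\Omega'$ with $\overline\Omega\Subset\Omega'$. It then applies Theorem~\ref{thm: main_klt} on $\Omega'$ with $K=\overline\Omega$, using the $\Omega$-independence of $\widetilde\alpha$ from \cite[Proposition~5.8]{GT23}.
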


In the smooth setting, local Moser--Trudinger inequalities were studied by Guedj--Kolev--Yeganefar and Berman--Berndtsson, with the former also constructing positively curved KE fillings of smoothly bounded strongly pseudoconvex domains \cite{GKY13,BB22}. Guedj--Trusiani subsequently developed a variational theory for isolated log terminal singularities, whose results yield the normalized-volume existence criterion of Corollary~\ref{cor: local_KE existence} under their admissibility assumption \cite{GT23}. Our corollary therefore extends this criterion to arbitrary isolated log terminal singularities.

For affine cone singularities, the above threshold can be described more explicitly. Li proved that K-semistability of the Fano base is equivalent to equivariant minimization of the normalized volume at the canonical valuation, while Li--Liu subsequently established global minimization for cones over KE Fano varieties \cite{Li17,LiLiu19}. Thus, in this setting,  the normalized volume, and the threshold in Corollary~\ref{cor: local_KE existence}, is computed by the canonical valuation.

\paragraph{Acknowledgments.} The first author was partially supported by NSF grant DMS-2405274. The second author is supported by Scientific Research Innovation Capability Support Project for Young Faculty SRICSPYF-ZY2025169 and NSFC grant 12571060.

\paragraph{Use of AI.}
This paper grew out of the authors' efforts to prove 
Theorem~\ref{thm: delta_eq delta_A}. We had been interested in this
problem since our first collaboration \cite{DZ24}, but at that time we were
unable to overcome the difficulty caused by the lack of convexity
of the $\lambda$-Ding functional for $\lambda \geq 1$. During a discussion with ChatGPT 5.6 Sol concerning one of our earlier
approaches, the model suggested applying Demailly--Koll\'ar
continuity together with our \cite[Theorem~1.4]{DZ24}  to the integrability exponents of the Legendre
transforms of converging geodesic segments. This suggestion led us to a proof of Theorem~\ref{thm: delta_eq delta_A} in the smooth K\"ahler setting.

Extending the above argument to klt pairs required a singular version of the Demailly--Koll\'ar continuity theorem. Further discussions with ChatGPT first produced a preliminary global argument in the compact projective setting, and later for local analytic klt pairs. We developed the related ideas, however the idea in the proof of Theorem~\ref{thm: main_klt}, to apply an Ohsawa--Takegoshi type extension theorem along regular holomorphic slices, remains as suggested by the AI.

Apart from AI assisted language editing, the mathematical
arguments were written by us, and we take responsibility for the correctness of the results.

\section{Pluripotential preliminaries on normal spaces}

We collect terminology and a number of preliminary results that will be needed in the proof of our main theorems.

Let $X$ be a reduced normal complex analytic space of pure dimension
$n\geq 1$. We denote its regular and singular locus by
$
X_{\rm reg}$ 
and
$X_{\rm sing}:=X\setminus X_{\rm reg}.
$
Then $X_{\rm reg}$ is a smooth complex manifold of dimension $n$, and 
$X_{\rm sing} \subset X$ is an analytic set of complex codimension at least two.

A function
 $u:X\to \mathbb R\cup\{-\infty\}$
is plurisubharmonic (psh) (notation $u \in \PSH(X)$),  if locally on $X$ it is the restriction
of a psh function defined on an ambient neighborhood in a copy of 
$\mathbb C^N$. 

Recall that a function $v$ is weakly psh if it is
locally bounded from above and its restriction to $X_{\rm reg}$ is psh. Its upper-semicontinuous (usc) extension across $X_{\rm sing}$ is defined by
\[
v^*(p)
:=
\limsup_{\substack{y\to p\\y\in X_{\rm reg}}}v(y).
\]
Since $X$ is normal, it is locally irreducible near all of its points, so the theorem of
Forn{\ae}ss--Narasimhan \cite{FN80} implies that $v$ is weakly
psh if and only if its usc extension $v^*$ is psh on $X$. Hence in our context there is no essential difference between weakly and strongly psh functions.

We say that $w:X\to \mathbb R\cup\{-\infty\}$ is quasi-plurisubharmonic (qpsh) if locally it is the sum of a smooth function and a psh function.

Let us record the following consequence of Guan--Zhou's strong openness \cite{GuZh15}, which is essentially contained in \cite[Corollary B.2]{BBJ21}.

\begin{proposition}
\label{prop: adapted_strong_opn}
Let $(X,\Delta)$ be a normal klt $\Bbb R$-pair and $\mu$ an adapted measure. Let $K\Subset\Omega\subset X$, and
let $u\in\PSH(\Omega)$.  Suppose that $\lambda>0$ and for some
$K\subset U\Subset\Omega$,
\begin{equation}
                       \int_U e^{-\lambda u}\,d\mu<\infty.
\end{equation}
Then there exist $\varepsilon>0$ and
$K\subset U'\Subset U$ such that
\begin{equation}
\label{eq: adapted_strong_opn}
             \int_{U'}e^{-(1+\varepsilon)\lambda u}\,d\mu<\infty.
\end{equation}
In particular, $\lambda<c_{\mu,K}[u].$
\end{proposition}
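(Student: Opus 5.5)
The plan is to reduce to the smooth case on a log resolution and apply Guan--Zhou's strong openness with respect to a weight $e^{-\phi}$, where $\phi$ is quasi-psh. Since $K$ is compact and the conclusion is local, it suffices to cover $K$ by finitely many small open sets. So fix $x\in K$, and take a small neighborhood $V\Subset U$ of $x$ over which we have a log resolution $\pi:Y\to V$ of $(X,\Delta)$ with discrepancy divisor $A=\sum a_jE_j$, $a_j>-1$. By \eqref{eq: adapted_density_res} we have $\pi_*\tilde\mu=\mu$, and locally on $Y$
\[
d\tilde\mu=e^{G}\prod_i|z_i|^{2a_i}dV_Y .
\]
Set $\tilde u:=u\circ\pi$, which is psh on $\pi^{-1}(V)$. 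Then
\[
\int_V e^{-\lambda u}d\mu=\int_{\pi^{-1}(V)}e^{-\lambda\tilde u}e^{G}\prod|z_i|^{2a_i}dV_Y<\infty .
\]

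Split the exponents as $a_i=a_i^+-a_i^-$ with $a_i^\pm\ge 0$ and $a_i^-<1$. The factor $\prod|z_i|^{2a_i^+}$ is bounded, so it cannot be discarded without losing the hypothesis. Instead, work with the psh weight
\[
\phi:=\lambda\tilde u-\sum a_i\log|z_i|^2 .
\]
This is a difference of psh functions, so strong openness does not apply to it directly. To fix this, use that $\pi^{-1}(\overline{V'})$ is compact for $V'\Subset V$, and cover it by finitely many coordinate charts $W_k$. On each $W_k$, rewrite
\[
e^{-\lambda\tilde u}\prod|z_i|^{2a_i}=e^{-\lambda\tilde u-\sum a_i^-\log|z_i|^2}\cdot\prod|z_i|^{2a_i^+}.
\]
The positive part is harmless in the direction we need. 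Apply the twisted (weighted) form of Guan--Zhou strong openness: if $\int e^{-\varphi}e^{-\phi_0}<\infty$ with $\varphi,\phi_0$ psh, then $\int e^{-(1+\varepsilon)\varphi}e^{-\phi_0}<\infty$ on a smaller set. This holds because the multiplier ideal $\mathcal I(\varphi+\phi_0)$ strong openness is equivalent to the statement $\mathcal I_+=\mathcal I$ for the ideal of germs $f$ with $|f|^2e^{-\varphi-\phi_0}$ integrable. Here we take $\varphi=\lambda\tilde u$, $\phi_0=\sum a_i^-\log|z_i|^2$, and the holomorphic germ $f=\prod z_i^{m_i}$ when the $a_i^+$ are integral. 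In general we handle the $|z_i|^{2a_i^+}$ factor through the version of Guan--Zhou with weights $|F|^2$ replaced by $\prod|z_i|^{2a_i^+}$. This reduces to the integral case by a branched cover $z_i=w_i^{N}$, or by approximating $a_i^+$ from above by rationals at the cost of shrinking $\varepsilon$ and using Hölder. This step is exactly where \cite[Corollary B.2]{BBJ21} does the work, and it is the step I expect to be delicate: ensuring that the factor $\prod|z_i|^{2a_i^+}$ is genuinely accounted for rather than dropped. The cleanest route is the cover $w\mapsto w^N$ with rational approximation, which turns all weights into $|{\rm holomorphic}|^2$ times $e^{-\text{psh}}$ with Jacobian absorbed.

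Once we have local integrability of $e^{-(1+\varepsilon_k)\lambda\tilde u}\,d\tilde\mu$ on each chart $W_k'\Subset W_k$, the rest is bookkeeping. Take $\varepsilon_x=\min_k\varepsilon_k$, and choose the charts $W_k'$ so that they still cover $\pi^{-1}(\overline{V''})$ for a neighborhood $V''$ of $x$. Pushing forward then gives
\[
\int_{V''}e^{-(1+\varepsilon_x)\lambda u}d\mu<\infty .
\]
Finally, cover $K$ by finitely many such $V''_{x}$, set $U'=\bigcup V''_x\Subset U$ and $\varepsilon=\min\varepsilon_x$. This gives \eqref{eq: adapted_strong_opn}. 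The inequality $\lambda<c_{\mu,K}[u]$ follows immediately from \eqref{eq: compact_threshold}, since the exponent $(1+\varepsilon)\lambda$ is admissible.
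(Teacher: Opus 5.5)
Your argument is the same as the paper's proof. You pass to a local log resolution and write $e^{-\lambda u\circ\pi}\,d\tilde\mu=e^{P-R}\,dV$, with $P=\sum_{a_i>0}a_i\log|z_i|^2$ of analytic singularity type and $R=\sum_{a_i<0}(-a_i)\log|z_i|^2+\lambda u\circ\pi$ quasi-psh, let \cite[Corollary B.2]{BBJ21} supply the gain $(1+\varepsilon)$, and finish with a finite cover of $K$.

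One caveat concerns the self-contained route you call cleanest: it does not close as stated. Approximating an irrational $a_i^+$ from above preserves the hypothesis but only gives the conclusion against the smaller weight $|z_i|^{2b}$. Approximating from below loses the hypothesis. H\"older interpolation between the two would need quantitative control of $\varepsilon_b$ as $b$ approaches $a_i^+$, which strong openness does not give.

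No branched cover is needed, though. Write $|z_i|^{2a_i}=|z_i^{\lceil a_i\rceil}|^2|z_i|^{-2(\lceil a_i\rceil-a_i)}$, with $\lceil a_i\rceil\ge0$ and $0\le\lceil a_i\rceil-a_i<1$. This puts the density in the form $|f|^2e^{-\varphi}$ with $f=\prod_i z_i^{\lceil a_i\rceil}$ holomorphic and $\varphi=\lambda u\circ\pi+\sum_i(\lceil a_i\rceil-a_i)\log|z_i|^2$ psh. Guan--Zhou's $\mathcal I_+(\varphi)=\mathcal I(\varphi)$ then applies directly. This is the same $\lceil A\rceil$, $\lceil A\rceil-A$ splitting the paper uses in Lemma~\ref{lemma: adapted_slice_ext}.
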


\begin{proof} By compactness of $K$, it is enough to establish \eqref{eq: adapted_strong_opn} in a neighborhood $W$ of a fixed $x\in K$. After taking a log resolution $\pi:Y\to W$, the measure in \eqref{eq: adapted_density_res} has the local form
\begin{equation}\label{eq: klt_measure}
e^{-\lambda u\circ \pi}d \tilde \mu = e^{P - R} dV,
\end{equation}
where $dV$ is a smooth positive measure and
\[
 P:=\sum_{a_i>0}a_i\log|z_i|^2,
 \qquad
 Q:=\sum_{a_i<0}(-a_i)\log|z_i|^2,
 \qquad
 R:=Q+\lambda u\circ\pi.
\]
Here $P$ has analytic singularities and $R$ is qpsh. As a result, \cite[Corollary B.2]{BBJ21} and its proof are applicable, implying the result.
\end{proof}

We record a local version of the existence of log resolutions, that will readily follow from the results of \cite{Wlo09}.

\begin{proposition}
\label{prop: localresolution}
Let $x$ be a point of a normal complex analytic space $X$, and let
$Z\subsetneq X$ be a proper closed analytic subset.  After shrinking $(X,x)$ inside an embedding into $\Bbb C^N$, there exist strongly pseudoconvex Stein
neighborhoods
\[
  x\in U_0\Subset U_1\Subset U_2 \Subset X
\]
and a proper bimeromorphic holomorphic map
\(
  \pi:V_2\to U_2
\)
with the following properties:
\begin{enumerate}
\item $V_2$ is a smooth K\"ahler manifold, $\pi$  is an isomorphism over
  $U_2\setminus(X_{\Sing}\cup Z)$;
  \item $\operatorname{Exc}(\pi)\cup\pi^{-1}(Z)$ is a simple
  normal-crossing divisor inside $V_2$;
  \item each 
  $V_j := \pi^{-1}( U_j), \ j=0,1$ is  weakly pseudoconvex.
\end{enumerate}
\end{proposition}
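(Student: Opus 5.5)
The plan is to reduce to Włodarczyk's functorial resolution \cite{Wlo09} and then build the pseudoconvex neighborhoods by hand.

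First, choose the Stein neighborhoods. After shrinking, embed $(X,x)\hookrightarrow(\mathbb C^N,0)$. Let $\rho(z)=|z|^2$. For small radii $0<r_0<r_1<r_2<r_3$, the sets $U_j:=X\cap B(0,r_j)$ are closed analytic subsets of Stein balls, hence Stein. For a generic choice of radius, the sphere meets $X_{\rm reg}$ transversally, by Sard's theorem applied to $\rho|_{X_{\rm reg}}$. Moreover, $\rho|_X$ is a strictly psh exhaustion of $U_j$ in the sense of the restriction definition recalled above. Hence the $U_j$ are strongly pseudoconvex in the usual sense for complex spaces, namely they admit a strictly psh exhaustion $-\log(r_j^2-\rho)$. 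Clearly $U_0\Subset U_1\Subset U_2\Subset U_3\Subset X$.

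Second, apply \cite{Wlo09} on $U_3$, which is a relatively compact piece, so there are no issues with infinitely many blowups. Take the strong (embedded) resolution of singularities of $U_3$ together with principalization of the ideal of $X_{\Sing}\cup Z$. Functoriality and the fact that the centers lie over $X_{\Sing}\cup Z$ give a proper bimeromorphic $\pi:V_3\to U_3$ with the following properties. It is an isomorphism over $U_3\setminus(X_{\Sing}\cup Z)$. The total transform $\pi^{-1}(X_{\Sing}\cup Z)$, and hence $\operatorname{Exc}(\pi)\cup\pi^{-1}(Z)$, is an SNC divisor. Finally, $\pi$ is a composition of blowups along smooth centers, so it is projective over $U_3$. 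Set $V_2:=\pi^{-1}(U_2)$; properties (1) and (2) then follow directly.

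Third, obtain the Kähler structure. Since $\pi$ is a finite sequence of blowups with smooth centers, there is a $\pi$-relatively ample line bundle $-F$, with $F$ a combination of exceptional divisors, carrying a metric whose curvature $\omega_F$ is positive on the fibers. Because $U_3$ is Stein, it carries the strictly psh function $\rho$. Over the relatively compact set $U_2$, the form $\omega:=C\,\ddc(\rho\circ\pi)+\omega_F$ is Kähler on $V_2$ for $C\gg1$. This is the standard argument that a projective morphism to a Kähler (here Stein) space has Kähler source over compacts.

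Fourth, check weak pseudoconvexity of $V_j$ for $j=0,1$. The function $\phi_j:=-\log(r_j^2-\rho\circ\pi)$ is psh on $V_j$, being a pullback of a psh function under a holomorphic map. It is also an exhaustion, since $\pi$ is proper and $\phi_j$ is an exhaustion of $U_j$. Hence $V_j$ is weakly pseudoconvex, which is property (3).

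The main obstacle is the second step. One must extract from \cite{Wlo09} a \emph{local analytic} statement in which the resolution is an isomorphism exactly off $X_{\Sing}\cup Z$ and the total transform is SNC, and this requires care about the analytic version on relatively compact subsets. I would handle it by first resolving $X$ (with exceptional locus over $X_{\Sing}$ only), then principalizing the pulled-back ideal of $Z\cup\pi^{-1}(X_{\Sing})$, as in \cite{Wlo09}'s analytic theorem over relatively compact opens.
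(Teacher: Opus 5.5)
Your proposal is correct and is essentially the paper's proof: $U_j=X\cap\mathbb B_{r_j}$, a resolution by blowups with smooth centers from \cite{Wlo09}, K\"ahlerness inherited from those blowups, and weak pseudoconvexity of $V_j$ via the psh exhaustion $-\log(r_j^2-|\pi|^2)$. The one difference is minor: the paper takes an embedded resolution of $(X,Z)$ inside the ambient ball and restricts to the strict transform, so $V_2$ is K\"ahler simply as a submanifold of an iterated blowup of a ball, whereas your intrinsic resolution followed by principalization needs the standard relative-ampleness argument to produce the K\"ahler form.
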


\begin{proof} After shrinking $(X,x)$, we can assume that it is irreducible and that there exists a closed embedding into a ball, $X\hookrightarrow\Bbb B_R$ with $x=0$. 
Now carrying out an embedded resolution of $(X,Z)$ for the embedding $X \hookrightarrow \Bbb B_R$, as described in 
\cite[Theorems~2.0.2--2.0.3]{Wlo09}, we get a composition of blowups with smooth centers $q: \tilde {\Bbb B} \to \Bbb B_R$ resolving $(X,Z)$ near $x$. 

Thus, if $\tilde X$ is the strict transform of $X$ under $q$, then we can take $U_j:= \Bbb B_{r_j} \cap X$ and $V_j := q^{-1}(U_j) \cap \tilde X$ for some $0 < r_0 < r_1 < r_2 < R$, yielding the resolution map  
$\pi:= q|_{V_2}: V_2 \to U_2$.

As blowups with smooth centers transform K\"ahler manifolds to K\"ahler manifolds, we get that $V_2$ must be K\"ahler. We obtain that $V_j = \pi^{-1}(U_j)$ is indeed weakly pseudoconvex, with the psh exhaustion function $\rho_j := -\log (r_j^2 - |\pi(\cdot )|^2)$. 
\end{proof}

\section{$L^2$ extension from regular holomorphic slices}

We collect analytic tools needed to extend holomorphic data from regular level sets/slices of a holomorphic function of a normal complex space to the ambient space. These will provide important ingredients for the proof of Theorem \ref{thm: main_klt}.

\subsection{The coarea formula for regular holomorphic fibers}

We  recall the coarea formula for regular holomorphic level sets. Let $T: M \to \Omega \subset \Bbb C$ be a holomorphic map such that $dT$ is non-vanishing, and $(M,\eta)$ is a K\"ahler manifold of dimension $n$. Then for each $w \in \Omega$ the fibers $(T^{-1}(w),\eta)$ are K\"ahler hypersurfaces. 

For any compactly supported nonnegative smooth function $\Psi: M \to \Bbb R$ we have the following formula: 
\begin{flalign}\label{eq: general_coarea_quotient}
 \int_M \Psi\,dV_M = \frac{1}{n!}\int_M \Psi\,\eta^n &= \frac{1}{(n-1)!} \int_\Omega \bigg(\int_{T^{-1}({w})} \Psi \frac{1}{|dT|^2_{\eta}}\eta^{n-1}\bigg) dA(w)\\
 &=\int_\Omega \bigg(\int_{T^{-1}({w})} \Psi \frac{1}{|dT|^2_{\eta}}d V_{T^{-1}(w)}\bigg) dA(w). 
\end{flalign}
This follows from the coarea formula \cite[Theorem~3.2.22]{Fed69}, or more specifically from \cite[Section 2.15, p.~17]{De12}.  We point out the elementary argument. 

Using a partition of unity, it is enough to verify the above formula in local charts of the type $(\xi_1: = z_1,\xi_2=z_2,\ldots, \xi_{n-1} := z_{n-1}, \xi_n := T(z_1,\ldots z_n))$, assuming that $\partial T/\partial z_n \neq 0$.

In such a chart,  $T^{-1}(w) = \{\xi_n = w\}$ and we suppose that $\eta = i \sum_{j,k} \eta_{jk} d \xi_j \wedge \overline{d \xi_k}$. Using the co-factor representation of the inverse matrix, we get that
$$\eta^{n,n} = \frac{\det \eta_{j,k \in \{1,\ldots,{n-1}\}}}{\det \eta_{j,k \in \{1,\ldots,n\}}}.$$
But we also have that $\eta^{n,n} = |d \xi_n|^2_\eta = | d T|^2_\eta$, thus 
$|dT|^2_\eta = \frac{\det \eta_{j,k \in \{1,\ldots,{n-1}\}}}{\det \eta_{j,k \in \{1,\ldots,n\}}}.$

As a result, 
$$  \Psi  \eta^n =  n\frac{\Psi }{|dT|^2_\eta} \eta^{n-1} \wedge \big( i d \xi_n \wedge \overline{d \xi_n} \big).$$
Integrating both sides of this identity on $M$ gives \eqref{eq: general_coarea_quotient}, with the normalization convention $dA(\xi_n):=i d\xi_n\wedge\overline{d\xi_n}$.

\begin{remark}
Using Sard's theorem and the holomorphicity of $T$, we notice that the nowhere-vanishing condition on $dT$ in \eqref{eq: general_coarea_quotient} is superfluous. The formula also holds for any nonnegative Borel measurable function $\Psi:M\to\Bbb R_+$, by an elementary approximation argument.
\end{remark}

\subsection{An $L^2$ extension theorem on weakly pseudoconvex domains}

We record for later use an extension theorem on weakly pseudoconvex domains, which is a codimension one particular case  of
\cite[Theorem~1.1 and Remark~1.3]{ZhZh18} (see also \cite[Theorem 1.3]{Cao17}).

If $(U,\omega)$ is a K\"ahler manifold of dimension $n$ and $Z\subset U$ is a complex hypersurface, then
\[
 dV_{U,\omega}:=\frac{\omega^n}{n!},
 \qquad
 dV_{Z,\omega}:=
 \frac{(\omega|_Z)^{n-1}}{(n-1)!}.
\]
In the following statement, all metrics on canonical tangent/cotangent bundles and their tensor products are
those induced by $\omega$.

\begin{theorem}
\label{thm: zhou_zhu_ext}
Let $(U,\omega)$ be a weakly pseudoconvex K\"ahler manifold of
complex dimension $n$.  Let $L\to U$ be a holomorphic line bundle
equipped with a singular Hermitian metric $h_L$ satisfying
\(
 i\,\Theta_{h_L}(L) =_{loc} -\ddc \log h_L \geq 0
\)
in the sense of currents.

Let $T\in\mathcal O(U)$ and assume that
\(
 Z:=\{T=0\}
\)
is a smooth hypersurface with $d T|_Z$ non-vanishing.  Suppose that, for some
$A\in\mathbb R$,
\(
     \sup_U |T|^2\leq e^A.
\)
If
\(
 f\in H^0\!\left(Z,(K_U\otimes L)|_Z\right)
\)
satisfies
\begin{equation}
\label{eq: zz_slice_integ}
 \int_Z
 \frac{|f|_{\omega,h_L}^2}
      {|dT|_\omega^2}\,
 dV_{Z,\omega}
 <\infty,
\end{equation}
then there exists
\(
 F\in H^0(U,K_U\otimes L),\ F|_Z=f,
\)
such that
\begin{equation}
\label{eq: zhou_zhu_estimate}
 \int_U |F|_{\omega,h_L}^2\,dV_{U,\omega}
 \leq
 2\pi e^A
 \int_Z
 \frac{|f|_{\omega,h_L}^2}
      {|dT|_\omega^2}\,
 dV_{Z,\omega}.
\end{equation}
\end{theorem}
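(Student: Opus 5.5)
This statement is recorded as the codimension-one special case of \cite[Theorem~1.1 and Remark~1.3]{ZhZh18}. The plan is therefore not to reprove the $L^2$ extension from scratch, but to match our hypotheses and normalizations with theirs. Zhou--Zhu work on a weakly pseudoconvex K\"ahler manifold $(U,\omega)$ with a closed complex submanifold $S$ and a polar function $\Psi$ with $\Psi<0$ and $\Psi\sim \log d(\cdot,S)^2$ near $S$, plus a nonnegatively curved line bundle. We will take $S:=Z=\{T=0\}$ (codimension one) and $\Psi:=\log|T|^2-A$, which is $\leq 0$ because $\sup_U|T|^2\leq e^A$. The gain function $c\equiv 1$ suffices; it is the standard ``optimal constant'' case.

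First, we check the curvature hypotheses. The weight we extend with is $h_L$, with $i\Theta_{h_L}\geq 0$. We also need $i\Theta_{h_L}+\ddc\Psi\geq 0$, which holds because $\ddc\log|T|^2=[Z]\geq 0$. The singularity of $h_L$ is allowed in \cite{ZhZh18}; this is the content of their Remark~1.3, or of \cite[Theorem 1.3]{Cao17}, which handles singular metrics on weakly pseudoconvex K\"ahler manifolds.

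Second, we identify the residue measure. For $\Psi=\log|T|^2-A$ with $dT|_Z\neq0$, the generalized residue volume $dV_\omega[\Psi]$ on $Z$ is computed as
\[
\lim_{t\to\infty}\int_{\{-t-1<\Psi<-t\}} g\, e^{-\Psi}\,dV_{U,\omega}.
\]
For a test function $g$, we apply the coarea formula \eqref{eq: general_coarea_quotient} to the map $T$. The set $\{-t-1<\Psi<-t\}$ is the annulus $e^{A-t-1}<|w|^2<e^{A-t}$, and $e^{-\Psi}=e^A/|w|^2$. Integrating $e^A|w|^{-2}dA(w)$ over this annulus gives $\pi e^A$ with the normalization $dA=i\,dw\wedge d\bar w$, up to the factor $2$ coming from the convention $dA=i\,dw\wedge\overline{dw}=2\,dx\,dy$. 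Letting $t\to\infty$, fiber integrals converge to the integral over $Z$ by continuity. This gives $dV_\omega[\Psi]=c_n\, e^A|dT|_\omega^{-2}dV_{Z,\omega}$, and we track the constant so that it becomes $2\pi e^A$.

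Third, we plug this into the Zhou--Zhu estimate $\int_U|F|^2\leq \big(\int_0^\infty c(t)e^{-t}dt\big)\int_Z|f|^2dV[\Psi]$ with $c\equiv1$. This yields \eqref{eq: zhou_zhu_estimate}.

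The main obstacle is purely bookkeeping: reconciling normalizations (the factor $\pi$ versus $2\pi$, $\ddc$ conventions, and how $|f|_{\omega,h_L}$ is measured for sections of $K_U\otimes L$ restricted to $Z$ versus $K_Z\otimes L$ via adjunction $K_U|_Z\cong K_Z\otimes N_Z^*$ with $dT$ trivializing $N_Z^*$). We would carefully fix the identification $f=\tilde f\wedge dT$ and check that $|f|^2_\omega/|dT|^2_\omega=|\tilde f|^2_\omega$ pointwise on $Z$. Since only a finite constant matters for later applications, any discrepancy would only change $2\pi$ by a universal factor.
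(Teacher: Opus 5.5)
Your proposal is correct and rests on the same idea as the paper's proof: specialize the optimal Zhou--Zhu extension theorem to the flat weight $\log|T|^2-A$ with trivial gain. The difference is only in which formulation of that theorem you use. The paper applies \cite[Theorem~1.1]{ZhZh18} in its section form, with $E=U\times\mathbb C$ carrying the trivial metric, $s=e^{-A/2}T$, $\psi=0$ and $R(t)=e^{-t}$. Under the dictionary $\Psi=\log|s|_E^2$, $c(t)=e^{t}/R(-t)$, this is exactly your choice $\Psi=\log|T|^2-A$, $c\equiv 1$, and $C_R=\int_0^\infty c(t)e^{-t}\,dt=1$. In that form the denominator $|ds|^2_{\omega,E}=e^{-A}|dT|^2_\omega$ and the constant $2\pi C_R$ can be read off directly, so no residue measure has to be computed.

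The paper also checks two hypotheses you gloss over. First, $|s|_E<1$ strictly: this follows from the maximum principle on components meeting $Z$, and one sets $F=0$ on the other components. Second, condition (iii), which it verifies with $\alpha=(1-|s|_E^2)/2$ and $\log x\le x-1$. Your polar-function statement is really the Guan--Zhou formulation. You should therefore cite a variant valid on weakly pseudoconvex K\"ahler manifolds with singular weights, rather than \cite[Theorem~1.1]{ZhZh18} as you describe it.

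Your route costs a coarea computation, but it yields the sharp constant, so your hedge about a universal factor is unnecessary. With the standard normalizations in codimension one, the residue measure carries the factor $2/\sigma_1=1/\pi$, where $\sigma_1=2\pi$ is the length of the unit circle, and the extension estimate carries a factor $\pi$. Using $dA=i\,dw\wedge\overline{dw}$ as in \eqref{eq: general_coarea_quotient}, one finds $dV_\omega[\Psi]=2e^A|dT|_\omega^{-2}\,dV_{Z,\omega}$. The constant is therefore exactly $\pi\cdot 2e^A=2\pi e^A$. The estimate in your third step omits this $\pi$, and your residue omits the $1/\pi$. These two omissions cancel, which is why your bookkeeping lands on $2\pi e^A$.
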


Some comments are in order, before we give the proof.
Under the adjunction isomorphism
\[
 (K_U\otimes L)|_Z
 \simeq
 K_Z\otimes L|_Z\otimes N_{Z/U}^*,
\]
a section $g\in H^0(Z,K_Z\otimes L|_Z)$ determines the section
\[
                    f=dT\wedge g
\]
of $(K_U\otimes L)|_Z$ bijectively.  Pointwise on $Z$,
\[
 |dT\wedge g|_{\omega,h_L}^2
 =
 |dT|_\omega^2\,|g|_{\omega,h_L}^2.
\]
Consequently, Theorem~\ref{thm: zhou_zhu_ext} gives an extension
$F\in H^0(U,K_U\otimes L)$ satisfying $F|_Z = dT \wedge g$ and 
\[
 \int_U |F|_{\omega,h_L}^2\,dV_{U,\omega}
 \leq
 2\pi e^A
 \int_Z |g|_{\omega,h_L}^2\,dV_{Z,\omega}.
\]

\begin{remark}
\label{rem: indpd_ofomega}
    Using the above considerations,  the pointwise norms on both sides of \eqref{eq: zhou_zhu_estimate} become independent of $\omega$. Indeed, for an $L$-valued holomorphic $n$-form $F$ we have  \[ |F|_{\omega,h_L}^{2}\,dV_{U,\omega} = c_ni^{n^2} F\wedge_{h_L}\overline F. \] 
Similarly, under the adjunction isomorphism, write $f$ locally along $Z$ as \[ f=dT\wedge g, \qquad g\in H^0(Z,K_Z\otimes L|_Z). \] Then \[ \frac{|f|_{\omega,h_L}^{2}}{|dT|_{\omega}^{2}}\, dV_{Z,\omega} = |g|_{\omega|_Z,h_L}^{2}\,dV_{Z,\omega} = c_{n-1}i^{(n-1)^2} g\wedge_{h_L}\overline g, \] which is also independent of $\omega$. Here $c_n$ and $c_{n-1}$ are dimensional constants.
\end{remark}

\begin{proof}[Proof of Theorem \ref{thm: zhou_zhu_ext}]
On a component of $U$ which does not meet $Z$, there is no prescribed
restriction, and we set $F=0$.  We may therefore assume that $U$ is
connected and that $Z\neq\varnothing$.

In the notation of \cite[Theorem~1.1]{ZhZh18}, take
\[
 E=U\times\mathbb C,\qquad m=1,\qquad \psi=0,
 \qquad s=e^{-A/2}T,
\]
and equip $E$ with the trivial metric.  Then
\(
 |s|_E^2\leq 1,\ 
 Y:=\{s=0\}=Z.
\)
Since $Z$ is smooth and reduced, $s$ is transverse to the zero
section of $E$.

Because $U$ is connected, $s$ has a zero, and $|s|_E\leq1$, the maximum
principle gives
\(
                       |s|_E<1
\)
everywhere on $U$.  Define
\(
     \alpha:=\frac{1-|s|_E^2}{2}.
\)
This is a continuous strictly positive function on $U$.  Moreover,
the elementary inequality $\log x\leq x-1$ gives, on $U\setminus Z$,
\(
 \log|s|_E^2
 \leq |s|_E^2-1
 =-2\alpha.
\)
Thus condition~(iii) of \cite[Theorem~1.1]{ZhZh18} is satisfied.

The bundle $E$ is flat, and on $U\setminus Z$ the function
$\log|s|_E^2$ is pluriharmonic.  Consequently,
\[
 i\,\Theta_E=0,
 \qquad
 i\,\partial\bar\partial\log|s|_E^2=0
 \quad\text{on }U\setminus Z.
\]
Conditions~(i) and~(ii) of \cite[Theorem~1.1]{ZhZh18} therefore both
reduce to
\(
     i\,\Theta_{h_L}(L)\geq0.
\)
The remaining condition on the singular metric is also automatic:
in a local holomorphic frame write
$h_L=e^{-\varphi}$. This metric is semipositive since  $\varphi$ is psh.

We now choose the denominator function in Zhou--Zhu's theorem. We introduce
\[
                 R(t):=e^{-t},
                 \qquad -\infty<t\leq0.
\]
Then \(R\) belongs to the class used in
\cite[Theorem~1.1]{ZhZh18}, with $C_R:=\int_{-\infty}^0\frac{1}{R(t)} dt=1$ and $e^tR(t)=1$.
That theorem gives an extension $F$ satisfying
\[
 \int_U
 \frac{|F|_{\omega,h_L}^2}
      {e^{\log|s|_E^2}R(\log|s|_E^2)}
 \,dV_{U,\omega}
 \leq
 C_R\frac{(2\pi)^1}{1!}
 \int_Z
 \frac{|f|_{\omega,h_L}^2}
      {|ds|_{\omega,E}^2}
 \,dV_{Z,\omega}.
\]
Note that
\(
 e^{\log|s|_E^2}R(\log|s|_E^2)=1
\)
on $U\setminus Z$, so it extends as $1$ across $Z$. Then we conclude, since $C_R=1$ and
\(
 |ds|_{\omega,E}^2=e^{-A}|dT|_\omega^2.
\)
\end{proof}

\section{Holomorphic estimates on normal spaces}

We will need a number of function theoretic technical results on normal K\"ahler spaces that are likely well known to experts, but we could not easily find in the literature.

\subsection{$L^2$ to $L^\infty$ estimates for klt pairs}

\begin{lemma}
\label{lemma: vol_domination_sup}
Let $(X,\Delta)$ be a normal analytic pair with $\Delta\ge0$, and let $\mu$
be an adapted measure.  We fix nested open sets
\[
  U_0\Subset U_1\Subset U_2\Subset W
\]
contained in one local embedding and in a trivializing patch for the
$\mathbb R$-Cartier data defining $\mu$ (recall \eqref{eq: adapted_measure_def}).   There exist constants $C_{\mathrm{dom}},C_{\mathrm{sup}}<\infty$ such
that
\begin{equation}
  dV_X\le C_{\mathrm{dom}}\,d\mu
  \qquad\text{on }U_2,
  \label{eq: vol_domination}
\end{equation}
and, for every $F\in\mathcal O(U_1)$,
\begin{equation}
  \sup_{U_0}|F|^2
  \le C_{\mathrm{sup}}\int_{U_1}|F|^2\,dV_X.
  \label{eq: l2 sup_normal}
\end{equation}
\end{lemma}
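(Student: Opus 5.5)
The plan is to prove the two estimates separately, since neither uses the klt condition. For \eqref{eq: vol_domination} I will show that the density of $dV_X$ with respect to $\mu$ is locally bounded near every point of $\overline U_2$. For \eqref{eq: l2 sup_normal} I will reduce to the sub-mean value inequality in $\mathbb C^n$ via a finite projection.

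\emph{Step 1: writing the density.} By Wirtinger's formula, on $X_{\rm reg}$
\[
dV_X=\frac{\omega_{\rm Euc}^n}{n!}\Big|_{X_{\rm reg}}=c_n\sum_{|I|=n} i^{n^2}\,dz_I\wedge\overline{dz_I}\Big|_{X_{\rm reg}} .
\]
So it suffices to bound each ratio
\[
\rho_I:=\frac{i^{n^2}dz_I\wedge\overline{dz_I}}{d\mu}
\]
uniformly on $U_2\cap X_{\rm reg}$. Near a point of $X_{\rm reg}$, take the frame $\sigma=\eta\otimes\prod_l f_l^{-d_l}$ from \eqref{eq: adapted_measure_def} and write $dz_I|_{X_{\rm reg}}=g_I\eta$ with $g_I$ holomorphic. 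Then
\[
\rho_I=|g_I|^2\prod_l|f_l|^{2d_l}\,h_\Delta(\sigma,\sigma)\big/|\sigma|^2_{\rm formal}.
\]
In plain terms, $\log\rho_I$ is locally of the form
\[
\log|g_I|^2+\sum_l d_l\log|f_l|^2+(\text{smooth}).
\]
Because $d_l\ge0$, this is quasi-psh and locally bounded above on $X_{\rm reg}$. Moreover $\log\rho_I$ is independent of the choice of $\eta$ and $f_l$, so it is a globally defined quasi-psh function on $W\cap X_{\rm reg}$. The smooth part comes from $h_\Delta$ and is uniformly controlled on the trivializing patch. After subtracting a smooth potential, it becomes psh on $W\cap X_{\rm reg}$.

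\emph{Step 2: crossing the singular set (the main obstacle).} The remaining issue is to bound $\rho_I$ near $X_{\rm sing}$. Since $X$ is normal, $X_{\rm sing}$ has codimension at least $2$. I will invoke the extension theorem for psh functions across analytic sets of codimension $\ge2$ in normal spaces (Grauert--Remmert; compare the Forn{\ae}ss--Narasimhan discussion above). This shows that the psh function from Step 1 extends to a psh function on $W$. Being usc, it is bounded above on the compact set $\overline U_2$, which yields \eqref{eq: vol_domination}. If this extension route proves delicate, the fallback is to pass to a resolution $\pi:Y\to W$ (Proposition~\ref{prop: localresolution}). There $\pi^*\rho_I=|s_I|^2e^{-G}$ on a dense open set. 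The task would then be to check that $s_I$ is a meromorphic (multivalued) function which is bounded on $Y\setminus(\text{exceptional}\cup\pi^{-1}\Supp\Delta)$, hence holomorphic by Riemann extension. This comes down to the same codimension-$2$ extension, applied on the normal space before pulling back.

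\emph{Step 3: the $L^2$--$L^\infty$ estimate.} By compactness of $\overline U_0$, it suffices to work near a fixed $x\in\overline U_0$. After shrinking and a generic linear projection (Noether normalization), I obtain a finite proper surjective holomorphic map $p:X\cap B\to \mathbb B\subset\mathbb C^n$ with $p(x)=0$ and $X\cap B\subset U_1$. For $F\in\mathcal O(U_1)$, the function $|F|^2$ is psh, since $F$ is locally the restriction of an ambient holomorphic function. Hence
\[
v(w):=\max_{p(y)=w}|F(y)|^2
\]
is psh on $\mathbb B$, by the standard push-forward property of psh functions under finite proper maps. The sub-mean value inequality on a slightly smaller ball $\mathbb B'\ni p(y)$ then gives, for $y$ near $x$,
\[
|F(y)|^2\le v(p(y))\le C\int_{\mathbb B}v\,d\lambda\le C\int_{\mathbb B}\sum_{p(y')=w}|F(y')|^2\,d\lambda(w)=C\int_{X\cap B}|F|^2\,p^*d\lambda .
\]
Finally, $p^*(i\,dd^c|w|^2)\le C\,\omega_{\rm Euc}$ because $p$ is linear, so $p^*d\lambda\le C'\,dV_X$. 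This gives \eqref{eq: l2 sup_normal} near $x$, and a finite cover of $\overline U_0$ completes the proof.
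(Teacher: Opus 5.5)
Your proposal is correct, and your Step 3 is essentially the paper's argument. The paper pushes $|F|^2$ forward by summing over the fiber with multiplicity, $Q_F(z)=\sum_{p(y)=z}|F(y)|^2$, rather than taking the maximum, which makes no difference.

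The genuine difference is in the domination $dV_X\le C_{\mathrm{dom}}\,d\mu$.
\begin{itemize}
\item The paper argues divisorially. For each coordinate form $\alpha_I\not\equiv 0$, the divisor $\operatorname{div}(\alpha_I)+\Delta$ is effective and $\mathbb R$-Cartier. On the normal space it is therefore locally $\sum_l\rho_l\operatorname{div}(g_l)$ with $g_l$ holomorphic and $\rho_l>0$. Hence $i^{n^2}\alpha_I\wedge\overline{\alpha_I}$ is a smooth positive multiple of $\prod_l|g_l|^{2\rho_l}\,d\mu$, visibly bounded by $C\,d\mu$.
\item You instead describe the density only on $X_{\rm reg}$, where its logarithm is psh up to a smooth term. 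You then cross $X_{\rm sing}$ analytically, using $\operatorname{codim}X_{\rm sing}\ge 2$ and the Grauert--Remmert extension theorem for psh functions on normal spaces.
\end{itemize}

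The paper's route is algebraic and exhibits the density as a product of powers of moduli of holomorphic functions. It rests on writing an effective $\mathbb R$-Cartier divisor locally as a positive combination of divisors of holomorphic functions, and on covering $\overline U_2$ by finitely many shrunken patches. Your route skips this divisor bookkeeping and bounds the density on $\overline U_2$ in one stroke, at the price of a deeper extension theorem.

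One thing to state precisely: the term you split off must be smooth across $X_{\rm sing}$, not merely on $X_{\rm reg}$. Take $\log h_\Delta(e,e)$ with $e=\prod_k e_k^{d_k}$, where $e_k$ generates $\mathcal O(D_k)$ for the fixed presentation $K_X+\Delta=\sum_k d_kD_k$ on the trivializing patch. Then $\log\rho_I-\log h_\Delta(e,e)$ is locally $\log|g_I|^2+\sum_l d_l\log|f_l|^2$ plus a pluriharmonic term. It is therefore genuinely psh on $W\cap X_{\rm reg}$, and Grauert--Remmert applies to it directly. With this, your resolution-based fallback is unnecessary.
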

In the above result, $dV_x$ is the local analytic K\"ahler volume of the embedding. The constants $C_{\mathrm{dom}},C_{\mathrm{sup}}$  depend only on $\mu$, the fixed local embedding, the domains $U_0,U_1,U_2$, but not on $F$.

We emphasize that property \eqref{eq: vol_domination} fails in the example described in \cite[Remark~1.3]{Hi14}, thereby accounting for the failure of the conclusion of Theorem~\ref{thm: main_klt} for measures of the form \(e^{\chi-\psi}\omega^n\).

\begin{proof}If $\alpha_I$ are the restrictions of the
coordinate $n$-forms $dz_{i_1}\wedge\cdots\wedge dz_{i_n}$ to $X_{\rm reg}$, then up to a fixed dimensional constant,
\begin{equation}
  dV_X=\sum_{|I|=n}i^{n^2}\alpha_I\wedge\overline{\alpha_I}.
  \label{eq: cauchy_binet_vol}
\end{equation}

We fix $\alpha_I \not \equiv 0$. 
Since $\Delta \geq 0$, $\operatorname{div}(\alpha_I)+\Delta$ is an effective
$\mathbb R$-Cartier divisor.  Since $X$ is normal, after shrinking  $W$ we have that $\operatorname{div}(\alpha_I)+\Delta = \sum_l \rho_l \textup{div}(g_l)$ for some  holomorphic  $g_l$ and $\rho_l > 0$. 

Then by the construction of $d\mu$ (recall \eqref{eq: adapted_measure_def}), we have that (up to a positive smooth multiplying factor)
$$d\mu = \frac{ i^{n^2}\alpha_I \wedge \overline{\alpha_I}} {\Pi_l|g_l|^{2\rho_l}} \ \  \textup{ on }  W.$$
This implies that $(i)^{n^2}\alpha_I \wedge \overline{\alpha_I} \leq C_{\textup{dom}} d\mu$. Coupled with \eqref{eq: cauchy_binet_vol}, \eqref{eq: vol_domination} follows.

We next prove \eqref{eq: l2 sup_normal}.  For every
$x_\alpha\in\overline U_0$, we choose a sufficiently small neighborhood
$Z_\alpha\Subset U_1$ of $x_\alpha$.  By the local parametrization
theorem 
\cite[Chapter~II, \S4.B, Theorem~4.19]{De12}, after choosing a generic linear projection of the fixed ambient
space and shrinking $Z_\alpha$, we obtain a finite proper holomorphic map
\[
 p_\alpha:Z_\alpha\to
 \mathbb B_{R_\alpha}\subset\mathbb C^n.
\]
After translation, we may assume
$p_\alpha(x_\alpha)=0$.  We choose
\[
 0<r_\alpha<s_\alpha<R_\alpha
\]
so that
$
 p_\alpha^{-1}(\overline{\mathbb B}_{s_\alpha})
 \Subset Z_\alpha\Subset U_1.$
By compactness of $\overline U_0$, finitely many of the open sets
$p_\alpha^{-1}(\mathbb B_{r_\alpha})$ cover $\overline U_0$.

We fix $F\in\mathcal O(U_1)$.  For $z \in \Bbb B_{s_\alpha} $ away from the discriminant of $p_\alpha$,
define
\[
 Q_F(z)
 :=
 \sum_{y\in p_\alpha^{-1}(z)}|F(y)|^2,
\]
where the sheets are counted with multiplicity.  On the complement of the
discriminant, the map $p_\alpha$ has finitely many local holomorphic inverse branches,
so $Q_F$ is psh.  Properness and finiteness imply that it is
locally bounded above across the discriminant of $p_\alpha$; hence it extends
plurisubharmonically to $B_{s_\alpha}$.

Let
\[
 \beta_\alpha
 :=
 dp_{\alpha,1}\wedge\cdots\wedge dp_{\alpha,n}.
\]
Since $p_\alpha$ is the restriction of a linear projection of the ambient
space, $\beta_\alpha$ is a fixed linear combination of the coordinate
forms $\alpha_I$.  Therefore, by
\eqref{eq: cauchy_binet_vol},
\[
 p_\alpha^*dV_{\mathbb C^n}
 \le C_\alpha\,dV_X
\]
on $Z_\alpha$. The submean inequality therefore gives 
\begin{align*}
  \sup_{\mathbb B_{r_\alpha}}Q_F
  \le C_\alpha\int_{\mathbb B_{s_\alpha}}Q_F\,dV_{\mathbb C^n}
  = C_\alpha\int_{p_\alpha^{-1}(\mathbb B_{s_\alpha})}
       |F|^2p_\alpha^*dV_{\mathbb C^n}
   \le C_\alpha'\int_{U_1}|F|^2\,dV_X.
\end{align*}
Since $|F|^2\le Q_F\circ p_\alpha$ away from the ramification locus and $F$ is continuous, we get
\[
 \sup_{p_\alpha^{-1}(\mathbb B_{r_\alpha})}|F|^2
 \le C_\alpha'\int_{U_1}|F|^2\,dV_X.
\]
Since a finite number of open sets $p_\alpha^{-1}(\Bbb B_{r_\alpha})$ cover $\overline{U_0}$, we obtain
\eqref{eq: l2 sup_normal}.
\end{proof}

\subsection{Quantitative division along regular holomorphic fibers}

\begin{lemma}
\label{lemma: quant_div}
Let $(X,x)$ be a normal $n$-dimensional analytic germ, let
\[
0\neq h\in\mathcal O_{X,x},\qquad h(x)=0,
\]
and let $U_0\ni x$ be an irreducible representative on which $h$ is
holomorphic.  There exist a neighborhood $x\in U_{-1}\Subset U_0$, a number
$\eta_{\mathrm{div}}>0$, and a constant $C_{\mathrm{div}}<\infty$ such
that
\[
\{h=w\}\cap U_{-1}\neq\varnothing
\qquad\text{for every }|w|<\eta_{\mathrm{div}}.
\]
Additionally, suppose that $0<|w|<\eta_{\mathrm{div}}$ and  $dh$ does not vanish on $\{h=w\}\cap U_0$. If $F\in\mathcal O(U_0)$ satisfies
\[
  F|_{\{h=w\}\cap U_0}=1,
  \qquad \sup_{U_{-1}}|F|\le M,
\]
then
\begin{equation}
  |F(x)-1|\le C_{\mathrm{div}}|w|(M+1).
  \label{eq: quant_div}
\end{equation}

\end{lemma}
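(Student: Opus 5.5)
The plan is to reduce everything to a finite branched covering of a polydisc whose first coordinate is $h$ itself. Then nonemptiness of the fibers is simply surjectivity of the covering, and the quantitative bound is a one-variable maximum principle in the $h$-direction applied to the quotient $(F-1)/(h-w)$.

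\textbf{Step 1: a finite map with $h$ as first coordinate.} Since $U_0$ is irreducible and $h\not\equiv0$, the germ $\{h=0\}$ at $x$ has pure dimension $n-1$. Inside the fixed local embedding we choose generic linear forms $l_2,\dots,l_n$, vanishing at $x$, such that $x$ is an isolated point of $\{h=l_2=\dots=l_n=0\}$. Then, as in the local parametrization theorem already used in the proof of Lemma~\ref{lemma: vol_domination_sup} (\cite[Chapter~II, \S4]{De12}), there is a small $\delta>0$ and an open neighbourhood $V\ni x$ such that
\[
p:=(h,l_2,\dots,l_n):V\to \Delta_\delta\times\Delta'_\delta\subset\mathbb C\times\mathbb C^{n-1}
\]
is finite, proper and surjective, with $p^{-1}(\overline{\Delta}_\delta\times\overline{\Delta}'_\delta)\Subset U_0$. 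We set $U_{-1}:=V$ and $\eta_{\mathrm{div}}:=\delta/2$. Surjectivity gives $p^{-1}(w,0)\neq\varnothing$, and therefore $\{h=w\}\cap U_{-1}\neq\varnothing$ for every $|w|<\eta_{\mathrm{div}}$; this proves the first claim.

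\textbf{Step 2: division, which I expect to be the main obstacle.} Let $G:=(F-1)/(h-w)$. We argue that $G\in\mathcal O(U_0)$ in three pieces.
\begin{itemize}
\item Off the fiber $\{h=w\}$, $G$ is holomorphic because the denominator does not vanish there.
\item At points of $\{h=w\}\cap X_{\rm reg}$, $dh\neq0$, so $h-w$ is a local coordinate. Since $F-1$ vanishes on $\{h-w=0\}$, it is divisible by $h-w$ there, and $G$ is holomorphic.
\item The remaining set is $\{h=w\}\cap X_{\rm sing}$, which has dimension at most $n-2$ because $\dim X_{\rm sing}\le n-2$. On a normal space, a function holomorphic outside an analytic set of codimension $\ge2$ extends holomorphically (second Riemann extension theorem). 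Hence $G$ extends to all of $U_0$.
\end{itemize}
The delicate points are checking that this extension theorem applies on $U_0$, and that the hypothesis on $dh$ is only needed on the regular part.

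\textbf{Step 3: estimate of $G$ by the maximum principle along $p$.} For $z'\in\Delta'_\delta$ and $t\in\Delta_\delta$, define
\[
Q(t,z'):=\max_{y\in p^{-1}(t,z')}|G(y)|.
\]
Away from the discriminant of $p$, $Q$ is a maximum of moduli of holomorphic functions composed with local inverse branches. It is locally bounded, so $\log Q$ extends as a psh function, exactly as $Q_F$ does in the proof of Lemma~\ref{lemma: vol_domination_sup}; in particular $t\mapsto Q(t,z')$ is subharmonic.

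For $|t|$ close to $\delta$ and $|w|<\delta/2$, every $y$ over $(t,z')$ satisfies
\[
|G(y)|\le \frac{M+1}{|t|-|w|}\le \frac{4(M+1)}{\delta},
\]
using $\sup_{U_{-1}}|F|\le M$. Strictly we take $|t|=\delta'<\delta$ with $\delta'$ close to $\delta$, and then let $\delta'\to\delta$. Applying the maximum principle at $t=0$, $z'=0$ gives $|G(x)|\le 4(M+1)/\delta$.

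\textbf{Step 4: conclusion.} Since $h(x)=0$, we have $F(x)-1=-w\,G(x)$. Therefore $|F(x)-1|\le C_{\mathrm{div}}|w|(M+1)$ with $C_{\mathrm{div}}:=4/\delta$, which is \eqref{eq: quant_div}.
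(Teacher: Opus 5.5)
Your proof is correct. Its skeleton matches the paper's: write $F-1=(h-w)G$, bound $G$ by a one-variable maximum principle along a direction in which $|h|$ stays away from $|w|$ on the boundary, and evaluate at $x$, where $h(x)=0$. What differs is how the one-variable slice is produced.

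\textbf{The paper's route.} It passes to a resolution $\pi:\widetilde U_0\to U_0$ and picks $y\in\pi^{-1}(x)$. It then takes a small coordinate disc $\Gamma$ through $y$ on which $h\circ\pi\not\equiv0$ has no zeros on the boundary circle. Rouch\'e's theorem gives $\{h=w\}\cap U_{-1}\neq\varnothing$. The maximum principle for the single holomorphic function $G\circ\pi$ on $\Gamma$ gives the estimate, with $C_{\rm div}=2/\delta$.

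\textbf{Your route.} You build a finite branched covering $p=(h,l_2,\dots,l_n)$ with $h$ as its first coordinate. Nonemptiness then comes from surjectivity. The estimate comes from the maximum principle applied to the fiberwise maximum $Q$ of $|G|$. This requires the psh extension of $Q$ across the discriminant. Since the line $\{z'=0\}$ may lie entirely in the branch locus, you should say explicitly that the extension dominates $|G(x)|$ at $(0,0)$ and obeys the fiberwise bound at boundary points. Both facts follow from density of the unramified part of $V$ and continuity of $G$.

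\textbf{Trade-off.} Your argument avoids resolution of singularities but relies on local parametrization theory and the psh-extension step. The paper's single-disc argument is shorter and never analyses a branch locus.

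\textbf{Where your version is more careful.} In Step 2 you do better than the paper's one-line assertion $\mathcal I_{\{h=w\}}=(h-w)$. You use $dh\neq0$ only at regular points of the fiber. You then extend $G$ across $\{h=w\}\cap X_{\rm sing}$, which has codimension at least $2$, using the Riemann extension theorem on normal spaces. This is exactly what makes the lemma's hypothesis meaningful when the fiber meets $X_{\rm sing}$. In the paper's application this case does not arise: $h$ vanishes on $X_{\rm sing}$, so fibers with $w\neq0$ avoid it.
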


\begin{proof}
After possibly shrinking $U_0$ we can choose a proper resolution 
\[
\pi:\widetilde U_0\to U_0.
\]
Let $y\in\pi^{-1}(x)$.  Since
$h$ is a nonzero germ and $\pi$ is bimeromorphic,
$h\circ\pi$
is a nonzero holomorphic germ at $y$.  After a linear change of local
coordinates $(z_1,\ldots,z_n)$ centered at $y=0 \in \Bbb C^n$, the one-variable germ
\[
a(\zeta):=h \circ \pi(0,\ldots,0,\zeta)
\]
is not identically zero.  We choose $r>0$ so small that the closed coordinate
disc
\[
\Gamma:=\{(0,\ldots,0,\zeta):|\zeta|\le r\}
\]
is contained in the coordinate neighborhood and $a$ has no zero on
$|\zeta|=r$.  We introduce
\[
\delta:=\min_{|\zeta|=r}|a(\zeta)|>0,
\qquad
\eta_{\mathrm{div}}:=\frac{\delta}{2},
\]
and choose a neighborhood $x\in U_{-1}\Subset U_0$ so that $\pi(\Gamma)\subset U_{-1}$.  By
Rouch\'e's theorem, for every $|w|<\eta_{\mathrm{div}}$ the function
$a-w$ has a zero in $|\zeta|<r$.  Hence $\{h=w\}\cap U_{-1}\neq\varnothing$.

Now suppose that $0<|w|<\eta_{\mathrm{div}}$ and that $F$ satisfies the
hypotheses of the lemma.  Since $dh \neq 0$ on  $\{h=w\}\cap U_0$, we have $\mathcal I_{\{h = w\}} = (h-w)$. Hence, there exists $G\in\mathcal O(U_0)$ such that 
\begin{equation}
  F-1=(h-w)G.
  \label{eq: division_factor}
\end{equation}
 On $|\zeta|=r$ we have
\[
\begin{aligned}
 |(G\circ\pi)(0,\ldots,0,\zeta)|
 &=\frac{|(F\circ\pi)(0,\ldots,0,\zeta)-1|}{|a(\zeta)-w|}\\
 &\le\frac{M+1}{\delta-|w|}
 \le\frac{2}{\delta}(M+1).
\end{aligned}
\]
Thus, the maximum principle on the disc $|\zeta|\le r$ gives
$
 |G(x)|=|(G\circ\pi)(y)|\le\frac{2}{\delta}(M+1).
$
Since $h(x)=0$, \eqref{eq: division_factor} yields
\[
 |F(x)-1|=|w|\,|G(x)|
 \le\frac{2}{\delta}|w|(M+1),
\]
which proves \eqref{eq: quant_div}, with
$C_{\mathrm{div}}=2/\delta$.
\end{proof}

\subsection{A normal family estimate}

We recall a basic fact about normal families of holomorphic functions on complex spaces.

\begin{lemma}\label{lemma:  normal_fam_est}
Let $\mathcal F\subset\mathcal O(X)$ be a uniformly bounded family of holomorphic functions on a complex space $X$. If $x\in X$ and $|F(x)|\ge 1$ for every $F\in\mathcal F$, then every member of the family is bounded below by $1/2$ on a fixed open neighborhood of $x$.
\end{lemma}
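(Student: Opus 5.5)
The plan is to reduce to equicontinuity of $\mathcal F$ at $x$. Say $|F|\le M$ on $X$ for every $F\in\mathcal F$. Work in a local embedding $(X,x)\hookrightarrow \mathbb C^N$ with $x=0$.

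Choose a finite proper map $p:Z\to\mathbb B_R\subset\mathbb C^n$ from a neighborhood $Z$ of $x$, as in the proof of Lemma~\ref{lemma: vol_domination_sup} (via \cite[Chapter~II, \S4.B, Theorem~4.19]{De12}), with $p(x)=0$.

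The cleanest route to equicontinuity uses local extension. Since $X$ is a closed analytic subset of a small ball $B\subset\mathbb C^N$ (a Stein space), we can use the bounded extension property. More precisely, the restriction map $\mathcal O(B')\to\mathcal O(X\cap B')$ is surjective, and by the open mapping theorem for Fréchet spaces it is open. Hence on a smaller ball $B''\Subset B'$, every $F$ that is bounded by $M$ on $X\cap B'$ admits an extension $\tilde F$ with $\sup_{B''}|\tilde F|\le C(M)$.

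The open mapping step needs care. The quotient topology is the topology of uniform convergence on compacts, and $\{|F|\le M\}$ is only a neighborhood in a weaker sense. So what we actually use is this: the set $\{F:\sup_{K'}|F|\le M\}$ is a neighborhood of $0$ in $\mathcal O(X\cap B')$, its image under a continuous linear section is unavailable, but openness gives that some multiple of this set lies in the image of a bounded set of $\mathcal O(B')$. That yields a uniform bound on a smaller ball.

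Cauchy estimates on $B''$ then give $|\tilde F(y)-\tilde F(x)|\le C'(M)|y-x|$ for $y\in B'''\Subset B''$, uniformly in $F\in\mathcal F$.

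To conclude, choose $r$ with $C'(M)r\le 1/2$. For $y\in X\cap B_r$ we get $|F(y)|\ge |F(x)|-1/2\ge 1/2$. The neighborhood $X\cap B_r$ depends only on $M$ and the embedding, so it is fixed for the whole family.

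The main obstacle is the uniform extension step. As an alternative that avoids functional analysis, one can argue by contradiction with Montel's theorem on complex spaces. If the conclusion failed, there would be $F_k\in\mathcal F$ and $y_k\to x$ with $|F_k(y_k)|<1/2$. Uniformly bounded sequences in $\mathcal O(X)$ have subsequences converging locally uniformly to some $F\in\mathcal O(X)$; locally this is proved by pushing forward to $\mathbb B_R$ via $p$ and using that the elementary symmetric functions of the values on the sheets are bounded holomorphic functions. The limit is then continuous with $|F(x)|\ge1$. On the other hand, local uniform convergence together with continuity of $F$ gives $|F(x)|=\lim|F_k(y_k)|\le 1/2$, a contradiction. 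This is the argument I would actually write, citing the standard Montel theorem for reduced complex spaces (the local uniform limit is holomorphic since $\mathcal O$ is closed under local uniform limits on reduced spaces).
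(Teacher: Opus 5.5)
Your proof is correct. The argument you say you would actually write has the same skeleton as the paper's: argue by contradiction with $F_k\in\mathcal F$, $x_k\to x$ and $|F_k(x_k)|<1/2$, then use a Montel-type compactness to force $F_k(x_k)-F_k(x)\to0$. For this step you only need the limit to be continuous, not holomorphic.

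The difference lies in where Montel is applied. The paper avoids Montel on the singular space altogether. It lifts $x_k$ to points $y_k\in\pi^{-1}(x_k)$ on a proper resolution $\pi\colon Y\to V$. Compactness of $\pi^{-1}(\overline W)$ then gives $y_k\to y_\infty\in\pi^{-1}(x)$, and the classical Montel theorem is applied to $F_k\circ\pi$ in a chart of the manifold $Y$ around $y_\infty$.

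You instead cite Montel's theorem for reduced complex spaces. That citation is legitimate, but your one-line justification of it is incomplete. Bounds and convergence for the symmetric functions $\sigma_i(F_k)$ on $\mathbb B_R$ do not by themselves yield equicontinuity of the $F_k$, or pointwise control on them, because the roots over a fixed $z$ can be permuted among the sheets.

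The missing step is short, and it actually proves the lemma directly, with no contradiction and no resolution. Arrange $p^{-1}(0)=\{x\}$ and let $d$ be the sheet number. For $|F|\le M$, every $F(y)$ is a root of $P_{p(y)}(T)=T^d+\sum_{i\ge1}(-1)^i\sigma_i(F)(p(y))\,T^{d-i}$, by density of the unramified part and continuity. Properness gives $P_0(T)=(T-F(x))^d$. Cauchy estimates for the bounded functions $\sigma_i(F)$ then give
\[
|F(y)-F(x)|^d=\bigl|P_0(F(y))-P_{p(y)}(F(y))\bigr|\le C M^d\,|p(y)|,\qquad y\in p^{-1}(\mathbb B_{R/2}),
\]
with $C$ independent of $F$.

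So the finite-projection route is more elementary and quantitative than the paper's. The paper's resolution trick instead reduces everything to the textbook smooth Montel theorem, using a tool the paper already needs elsewhere.

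Your first route, bounded ambient extension plus Cauchy estimates, also works. It rests on Cartan's Theorem~B and on the nontrivial fact that the quotient Fr\'echet topology on $\mathcal O(X\cap B')$ is the topology of compact convergence. That makes it the heaviest of the three.
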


\begin{proof} Suppose no neighborhood of $x$ satisfies the statement.  Then there
are functions $F_k$ in the family and points $x_k\to x$ such that
$|F_k(x_k)|<1/2$.  We choose open neighborhoods $x\in  W \Subset V\Subset X$ and a proper resolution $\pi: Y \to V$.  For all
large $k$, we have $x_k\in W$ and choose $
 y_k\in\pi^{-1}(x_k).$
 
Since $\pi$ is proper, $\pi^{-1}(\overline W)$ is compact. Thus, after perhaps passing to
a subsequence, we may assume that
\[
 y_k\to y_\infty\in\pi^{-1}(x).
\]
The functions
$
 F_k\circ\pi
 $
are honest uniformly bounded holomorphic functions on the manifold $Y$.  By the classical Montel theorem in a
coordinate neighborhood of $y_\infty$, after possibly passing to a further
subsequence, the $F_k \circ \pi$ converge locally uniformly to a holomorphic function
$\widetilde F$.  Since $y_k\to y_\infty$, this gives
\[
 F_k \circ \pi(y_k) - F_k \circ \pi (y_\infty) \to 0.
\]
Since
$ F_k \circ  \pi(y_k)=F_k(x_k), \ 
 F_k \circ \pi (y_\infty)=F_k(x),
$
we get $F_k(x_k)-F_k(x)\to0$, contradicting
$|F_k(x_k)|<1/2$ and $|F_k(x)|\ge 1$.
\end{proof}

\section{Demailly--Koll\'ar continuity for klt spaces}

We now set up the notation for the proof of Theorem \ref{thm: main_klt}.  For the normal space $X$ in the statement of the theorem we introduce
\[
    \Sigma:=X_{\Sing}\cup\Supp\Delta.
\]
We fix $x\in\Sigma$. Near $x$ the set $\Sigma \subset X$ is a proper closed analytic
subset.  Hence its coherent ideal has a nonzero germ
$h\in\mathcal I_{\Sigma,x}$. After shrinking $X$ around $x$,  we can assume that $h \in \mathcal O(X)$ and $h$ vanishes on $\Sigma$.

First we apply Proposition~
\ref{prop: localresolution} with $Z=\Supp\Delta$, giving a resolution $\pi: V_2 \to U_2$ and we fix the
nested resolution domains
\begin{equation}
\label{eq: proof_domains}
x \in U_0 \Subset U_1 \Subset U_2 \Subset X.
\end{equation}

We can assume that $K_X+\Delta$ has a fixed
$\mathbb R$-Cartier presentation with the relevant adapted-measure $d\mu$ trivialized on a neighborhood of $\overline U_1$.  
Using the restriction of the log resolution
$\pi:V_1\to U_1$, we write
\[
 V_b:=\pi^{-1}(U_b)\quad (b=0,1),\qquad T:=h\circ\pi,\qquad
 H_b(w):=\{h=w\}\cap U_b.
\]

By Sard's theorem, 
 for a sufficiently small nonzero regular value $w$, the fiber $H_1(w)$
is contained in $X_{\mathrm{reg}}\setminus\Supp\Delta$.  Writing
$d\mu=\rho\,dV_X$ on $U_1$, we introduce the `conditional measure'
\begin{equation}
\label{eq: conditional_measure_proof}
             d\mu_w:=\frac{\rho}{|dh|^2}\,dV_{H_1(w)}.
\end{equation}
On the right hand side 
$|dh|$ and the hypersurface volume are computed using the Euclidean K\"ahler metric
that defines $dV_X$ on the smooth locus of $X$. 

In the proof of Theorem \ref{thm: main_klt} we will make use of the following $L^2$ extension theorem:

\begin{lemma}
\label{lemma: adapted_slice_ext}
We fix $0<t<t_1$.  Using the neighborhoods 
\eqref{eq: proof_domains}, there exist $\eta_0>0$ and $C_{\rm ext}<\infty$
such that the following holds.  Let $0<|w|<\eta_0$ be a regular value of $h$,
let $v\in\PSH(U_1)$ satisfy $v\le0$, and suppose that
$e^{-t_1v}$ is integrable with respect to $\mu_w$ on a neighborhood in
$H_1(w)$ of
$
    H_1(w)\cap\overline U_0.
$
Then there exists $F\in\mathcal O(U_0)$ such that
\begin{equation}
\label{eq: adapted_ext_conc}
 F=1\quad\text{on }H_0(w),
 \qquad
 \int_{U_0}|F|^2e^{-tv}\,d\mu
 \le C_{\rm ext}\int_{H_0(w)}e^{-tv}\,d\mu_w
 \le C_{\rm ext}\int_{H_0(w)}e^{-t_1v}\,d\mu_w.
\end{equation}
The constant $C_{\rm ext}$ is independent of $v$ and  $w$.
\end{lemma}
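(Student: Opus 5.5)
We pass to the resolution $\pi:V_1\to U_1$ from Proposition~\ref{prop: localresolution} and apply Theorem~\ref{thm: zhou_zhu_ext} on the weakly pseudoconvex K\"ahler manifold $V_0=\pi^{-1}(U_0)$ (if necessary we first extend on a slightly larger weakly pseudoconvex domain between $V_0$ and $V_1$, and then restrict). We take $T=h\circ\pi$ and note that $\sup_{U_1}|h|^2\le e^A$ for a fixed $A$. The section to be extended is $f=1$ on $Z:=T^{-1}(w)$, which $\pi$ maps isomorphically onto $H_0(w)$, since $H_1(w)\subset X_{\rm reg}\setminus\Supp\Delta$. We need $Z$ to be a smooth hypersurface with $dT|_Z\neq0$. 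Choosing $\eta_0$ small, the fiber avoids $\pi^{-1}(\Sigma)$, where $\pi$ is biholomorphic, so this follows from $w$ being a regular value of $h$.

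The main step is to encode $e^{-tv}\,d\mu$ as $|F|^2_{h_L}$ for an adjoint bundle $K_{V}\otimes L$. On $U_1$ the measure $\mu$ is trivialized, so $d\mu=e^{-\phi}\,i^{n^2}\sigma\wedge\bar\sigma$, with $\phi$ smooth and $\sigma$ the formal frame. On $V_1$, by \eqref{eq: discrepancy}, we have $\pi^*\sigma=\Omega\cdot\prod_j z_j^{-a_j}$ for a local holomorphic $n$-form $\Omega$. Hence, for a holomorphic function $F$, the form $\pi^*F\cdot\Omega$ is a section of $K_{V_1}$ (so $L$ is trivial), and
\[
|F\circ\pi|^2 e^{-tv\circ\pi}\,\pi^*d\mu=i^{n^2}(F\Omega)\wedge\overline{(F\Omega)}\,e^{-\varphi},\qquad \varphi:=tv\circ\pi+\phi\circ\pi+\sum_j a_j\log|z_j|^2 .
\]
The difficulty is that $\varphi$ is not psh: $\phi$ is merely smooth and the terms with $a_j>0$ have the wrong sign. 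To fix this we would first shrink so that $\phi$ is pluriharmonic, i.e.\ choose the frame so that $h_\Delta$ is flat on a Stein neighborhood; this is possible since any smooth metric is locally comparable to a flat one, at the price of a constant. For the divisorial part, we write $\sum a_j E_j=P-N$ and bound the weight $e^{-\sum a_j\log|z_j|^2}$ from below by $e^{-\psi_N}$ with $\psi_N=\sum_{a_j<0}(-a_j)\log|z_j|^2$ psh; this only loses on the $P$-part. The honest approach is to extend with the psh weight $\varphi':=tv\circ\pi+\psi_N$ and compare afterwards: $|z|^{2a}$ with $a>0$ is bounded above, so $e^{-\varphi'}\ge c\,e^{-\varphi}$ on $V_0$, giving $\int_{U_0}|F|^2e^{-tv}d\mu\le C\int_{V_0}|F\Omega|^2e^{-\varphi'}$. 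On the slice, $\pi^*d\mu$ and the weight $e^{-\varphi'}$ differ only by the smooth positive factor $|z|^{2P}$, and this factor is bounded below on $\pi^{-1}(H_0(w))$ uniformly in small $w$. The reason is that the slices stay away from $\pi^{-1}(\Sigma)$ only non-uniformly, so the clean argument is to work on $U_1$ directly: on $X_{\rm reg}$ the divisor $\Delta$ is the only issue, and the $P$-part lives over $\Sigma$. For uniformity we must not use the lower bound near $\pi^{-1}(\Sigma)$. We therefore expect the correct bookkeeping to be to use $\Omega=\pi^*\alpha_I$-type forms as in Lemma~\ref{lemma: vol_domination_sup}, making the comparison exact: take $L$ trivial with metric $e^{-tv\circ\pi}\cdot e^{-\log(\text{density ratio})}$ where the ratio $d\mu/(i^{n^2}\alpha_I\wedge\bar\alpha_I)=\prod|g_l|^{-2\rho_l}$ has a psh logarithm $\sum\rho_l\log|g_l|^2$. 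Thus the weight $tv\circ\pi+\sum_l\rho_l\log|g_l\circ\pi|^2$ is psh, and the extended object $F\cdot\pi^*\alpha_I$ reproduces exactly $|F|^2e^{-tv}d\mu$ up to a smooth factor.

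With the weight chosen this way, Theorem~\ref{thm: zhou_zhu_ext} together with Remark~\ref{rem: indpd_ofomega} produces $\tilde F\in H^0(V_0,K)$ with $\tilde F|_Z=\pi^*\alpha_I|_Z$ and an $L^2$ bound by $2\pi e^A\int_{H_0(w)}e^{-tv}\,d\mu_w$; the slice integral equals this by the definition \eqref{eq: conditional_measure_proof} together with the adjunction identity. Dividing, $F:=\tilde F/\pi^*\alpha_I$ is holomorphic off the zeros of $\alpha_I$, and it is $L^2$ against a measure dominating $dV_X$. It therefore extends holomorphically to $V_0$ and descends to $U_0$ by normality, since it is locally bounded (the $L^2\to L^\infty$ argument of Lemma~\ref{lemma: vol_domination_sup}); and $F=1$ on $H_0(w)$. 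The last inequality in \eqref{eq: adapted_ext_conc} is immediate from $v\le0$ and $t<t_1$. We expect the main obstacle to be this division step, namely showing that $\tilde F$ vanishes wherever $\pi^*\alpha_I$ does; handling it may require summing over several $I$ or choosing the $n$-form generically.
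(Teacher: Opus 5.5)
Once the abandoned attempts are discarded (in particular the claim that $|z|^{2P}$ is bounded below on $\pi^{-1}(H_0(w))$ uniformly in $w$, which is false because the slices accumulate on $\pi^{-1}(\Sigma)$), your final construction is a workable variant of the paper's argument, with different bookkeeping.

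\textbf{Comparison with the paper.} The paper's proof of this lemma uses no coordinate form. It writes $K_{V_1}=\pi^*(K_{U_1}+\Delta)+A$ and sets $D=\lceil A\rceil$, which is effective and $\pi$-exceptional by klt, and $B=D-A$, with coefficients in $[0,1)$. It equips $\mathcal M=\mathcal O_{V_1}(D-K_{V_1})$ with the metric $h_B\otimes\pi^*h_{P_\Delta}^{-1}e^{-tv\circ\pi}$, whose curvature is $\ge[B]\ge 0$. It then extends the canonical section $s_D$, viewed as an $\mathcal M$-valued $n$-form. Since $s_D$ vanishes only over $\Sigma$, it is nonzero on every slice. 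The identity $\pi_*\mathcal O_{V_1}(D)=\mathcal O_{U_1}$ then turns the extension into $(F\circ\pi)s_D$ with $F\in\mathcal O(U_0)$ and $F=1$ on $H_0(w)$, all at once.

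Your data are the trivial $L$, the section $\pi^*\alpha_I$, and the weight $tv\circ\pi+\Phi\circ\pi$ with $e^{-\Phi}=d\mu/(i^{n^2}\alpha_I\wedge\overline{\alpha_I})$. These amount to the paper's data twisted by the integral divisor $G:=\operatorname{div}(\pi^*\alpha_I)-D$, carrying the singular weight $|s_G|^{-2}$. This $G$ is effective, since $\operatorname{div}(\pi^*\alpha_I)=\pi^*(\operatorname{div}\alpha_I+\Delta)+A\ge A$ and it is integral. Your route avoids rounding discrepancies and reuses exactly the effective divisor $\operatorname{div}(\alpha_I)+\Delta$ behind \eqref{eq: vol_domination}. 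The price is that the division and the normalization $F=1$ must be done by hand.

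\textbf{Repairs needed.} The division step is not a real obstacle, but your justification should be replaced. On $V_0$ the $L^2$ bound only shows that $\tilde F/\pi^*\alpha_I$ is a section of $\mathcal O_{V_0}(D)$. The singular weight forces divisibility by $s_G$, but $\pi^*d\mu$ does not dominate a smooth volume along exceptional divisors with $a_i>0$, so poles there are not excluded. Moreover, the $L^2\to L^\infty$ estimate of Lemma~\ref{lemma: vol_domination_sup} presupposes $F\in\mathcal O(U_1)$, so invoking it for local boundedness is circular.

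Argue downstairs instead. The form $\tilde F$ pushes forward to a holomorphic $n$-form on $U_0\setminus\Sigma$, so $F=\tilde F/\pi^*\alpha_I$ is holomorphic on $U_0\setminus(\Sigma\cup\operatorname{div}\alpha_I)$. Since $\mu(\Sigma)=0$, $v\le0$ and $dV_X\le C_{\rm dom}\,d\mu$, the Zhou--Zhu bound gives $\int_{U_0\cap X_{\rm reg}}|F|^2\,dV_X<\infty$. $L^2$ holomorphic functions extend across hypersurfaces, so $F\in\mathcal O(U_0\cap X_{\rm reg})$. Normality, with $\operatorname{codim}X_{\Sing}\ge2$, then gives $F\in\mathcal O(U_0)$.

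What you genuinely omit is $F=1$ on $H_0(w)$. You only get $(F-1)\alpha_I=0$ along the slice, which says nothing on a component of $H_0(w)$ contained in $\operatorname{div}(\alpha_I)$. Only finitely many components of $\operatorname{div}(\alpha_I)$ meet $\overline U_0$, and each one on which $h$ is constant contributes a single bad level. Shrinking $\eta_0$ below the nonzero bad levels fixes this.

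Lastly, Lemma~\ref{lemma: vol_domination_sup} provides the $g_l$ only locally. Define $\Phi$ globally on $U_1$ by the formula above. Locally $\Phi=\sum_l\rho_l\log|g_l|^2$ plus a smooth function, so $(\Phi+C|z|^2)\circ\pi$ is psh on $V_0$ for $C$ large, and using it changes all integrals only by bounded factors. Also, the metric on $L$ is $e^{-tv\circ\pi}$ times the density ratio, not divided by it; the weight you state afterwards is the correct one.
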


\begin{proof}
Write
\[
 K_{V_1}=\pi^*(K_{U_1}+\Delta)+A,
 \qquad D:=\lceil A\rceil = \sum_j \lceil a_j \rceil E_j ,
 \qquad B:=D-A = \sum_j (\lceil a_j \rceil - a_j) E_j.
\]
The klt inequalities give $D\ge0$ and $0\le \lceil a_j \rceil - a_j<1$.  On the strict
transform of a component of $\Delta$, with coefficient $d_i\in[0,1)$, the
coefficient of $A$ is $-d_i$, with ceiling zero.  Thus $D$ is
$\pi$-exceptional.  Normality of $X$ and the Riemann extension theorem give
\begin{equation}
\label{eq: exceptional_push_short}
    \pi_*\mathcal O_{V_1}(D)=\mathcal O_{U_1}.
\end{equation}

We introduce $P_\Delta:=K_{U_1}+\Delta$.  By the choice of $U_1$, this divisor has a fixed
$\mathbb R$-Cartier presentation
\[
    P_\Delta=\sum_{\alpha=1}^N c_\alpha P_\alpha
\]
in which every Cartier divisor $P_\alpha$ induces a trivial line bundle $\mathcal O(P_\alpha)$ on $U_1$.  We choose a nonvanishing holomorphic frame of each $\mathcal O_{U_1}(P_\alpha)$ and
give it the flat metric $h_\alpha$.  Their product with exponents
$c_\alpha$ is a formal flat smooth metric $h_{P_\Delta} : = \Pi_\alpha h_\alpha^{c_\alpha }$ on the formal
$\mathbb R$-line bundle $\mathcal O_{U_1}(P_\Delta)$.

We introduce the line bundle
    \[\mathcal M:=\mathcal O_{V_1}(D-K_{V_1}).\]
The identity of $\mathbb R$-Cartier divisors
$D-K_{V_1}=B-\pi^*P_\Delta$
proves that $
 \mathcal O_{V_2}(B)\otimes\pi^*\mathcal O_{U_1}(-P_\Delta)$
is the genuine line bundle $\mathcal M$.  Equip $\mathcal O_{V_1}(B)$ with
its formal divisor metric $h_B = \Pi_j h^{\lceil a_j \rceil - a_j}_{E_j}$, where $i\,\Theta_{h_{E_j}}=[E_j]\ge0$. We  introduce
\[
 h_{\mathcal M}:=h_{B}
\otimes\pi^*h_{P_\Delta}^{-1},
\]
a hermitian metric on $\mathcal M$.  Thus $h_\mathcal M$ is a singular
Hermitian metric on the genuine line bundle $\mathcal M$.  Since $h_{P_\Delta}$ is flat,
\[
    i\,\Theta_{h_\mathcal M}(\mathcal M)=[B]\ge0.
\]

The weighted metric  $h_\mathcal M e^{-tv\circ\pi}$ has positive curvature current since $v$ is psh on $U_1$.  
                 
Using the identification, 
$K_{V_1}\otimes\mathcal M\simeq\mathcal O_{V_1}(D)$, we view the canonical section  $s_D$ of $D$ as an $\mathcal M$-valued holomorphic $n$-form.  

Now we use the K\"ahler form $\omega$ on $V_2$ furnished by Proposition~
\ref{prop: localresolution}, together with its induced canonical
bundle metric and volume forms.  

Let $d\widetilde\mu$ be the pullback of
$d\mu$ on the isomorphism locus of $\pi$, extended across the resolution
divisor by the formula of 
\eqref{eq: adapted_density_res}. 

For a nonzero regular value
$w$, set
$d\widetilde\mu_w:=\pi^*d\mu_w$ on $\{h \circ \pi =w\}$. This measure is well defined because
$\pi$ is an isomorphism over the regular  slice $\{h \circ \pi =w\}$. 

The discrepancy formula gives, on
the fixed set $V_1$, the following identities. There exists a smooth
positive function $q\in C^\infty(V_1)$, independent of $v$ and of the
small regular value $w$, such that
\begin{align}
 |s_D|^2_{\omega, h_{\mathcal M} e^{- t v \circ \pi }}  dV_{V_2, \omega}
 &=q e^{-tv\circ\pi}\,d\widetilde\mu,
 \label{eq: bulk_norm_short}\\
 \frac{|s_D|^2_{\omega,h_{\mathcal M} e^{-t v\circ \pi}}}{|d (h \circ \pi)|_{\omega}^2}
       dV_{\{h \circ \pi =w\},\omega}
 &=(q e^{-tv\circ\pi})|_{\{ h \circ \pi =w\}}\,d\widetilde\mu_w,
 \qquad \pi_*\widetilde\mu_w=\mu_w.
 \label{eq: residue_norm_short}
\end{align}
Indeed, due to Remark \ref{rem: indpd_ofomega} both measures on the left are intrinsic (independent of $\omega$). Additionally, in snc coordinates the singular factor of $h_\mathcal M$ is
$\prod_i|z_i|^{-2(\lceil a_i \rceil - a_i)}$. Thus, the only singular factor in the above formulas is
\[
       |z_i|^{-2(\lceil a_i \rceil - a_i)}|z_i|^{2\lceil a_i \rceil}=|z_i|^{2a_i}.
\]
All remaining factors are fixed smooth positive functions. 
Since $\overline V_0\Subset V_1$, the function $q$ has uniform positive
upper and lower bounds on $\overline V_0$.

We apply Theorem~\ref{thm: zhou_zhu_ext} to the defining function
$h \circ \pi -w$, with
\[
 U=V_0,\qquad L=\mathcal M,
 \qquad f=s_D|_{\{h \circ \pi =w\}\cap V_0}.
\]
Every component of the resolution divisor maps into $\Sigma$, where
$h=0$; hence $h \circ \pi$ vanishes along it and a nonzero fiber avoids
$\Supp(D+B)$.  We invoke Lemma~
\ref{lemma: quant_div}, and possibly decrease $\eta_0$ so that 
$\eta_0<\eta_{\mathrm{div}}$,  ensuring $H_0(w)\neq\varnothing$ for
all $0<|w|<\eta_0$.

Due to \eqref{eq: residue_norm_short}, the finiteness condition 
\eqref{eq: zz_slice_integ} holds for the section $f$.  We fix $A_0 >0$ such that
\[
    \sup_{|w|<\eta_0}\sup_{V_0}|h \circ \pi-w|^2\le e^{A_0}.
\]
Putting everything together,  Theorem~\ref{thm: zhou_zhu_ext} gives
\[
 S\in H^0(V_0,K_{V_1}\otimes\mathcal M)=H^0(V_0,\mathcal O_{V_2}(D)),
 \qquad S|_{\{T=w\}\cap V_0}=s_D,
\]
with the following $L^2$ bound, where the constants are independent of $v$ and $|w| < \eta_0$:
$$
\int_{V_0} |S|_{\omega,h_\mathcal M e^{-t v \circ \pi}}^2\,dV_{V_2,\omega}
 \leq
 2\pi e^{A_0}
 \int_{H_0(w)}
 \frac{|s_D|_{\omega,h_\mathcal M e^{-t v \circ \pi}}^2}
      {|dT|_\omega^2}\,
 dV_{\{h \circ \pi =w\},\omega}.$$

Using
\eqref{eq: exceptional_push_short}, we have a bijection 
between $H^0(V_0,\mathcal O_{V_2}(D))$ and $H^0(U_0,\mathcal O_{U_0})$, using the
natural map $F\mapsto(F\circ\pi)s_D$.  

As a result, 
$S=(F\circ\pi)s_D$ for a unique $F\in\mathcal O(U_0)$.  The condition $S|_{\{T=w\}\cap V_0}=s_D$  gives $F=1$ on $H_0(w)$. Thus, the formulas 
\eqref{eq: bulk_norm_short}, \eqref{eq: residue_norm_short} give the first estimate of \eqref{eq: adapted_ext_conc}. Since $v\le0$ and $t<t_1$ the second estimate follows immediately.
\end{proof}

In order to employ a Vitali type argument  in the proof of Theorem \ref{thm: main_klt}, we will need a uniform integrability estimate that we now supply:

\begin{proposition}\label{prop: local_higher_moment}
Let $G_0\Subset G\subset X$ be nested open sets and $x \in  G_0$. Let
$u_j,u\in\PSH(G)$, and assume $u_j\to u$ in
$L^1_{\rm loc}(G)$.  If
\begin{equation}
\label{eq: t0 integrability_short}
                  \int_{G_0}e^{-t_0u}\,d\mu<\infty
\end{equation}
for some $t_0>0$, then for every $0<t<t_0$ there is a neighborhood
$x \in W_x\Subset G_0$ of $x$, an index $j_x$, and $C_x<\infty$ such that
\begin{equation}
\label{eq: higher_moment_short}
        \sup_{j\ge j_x}\int_{W_x}e^{-tu_j}\,d\mu\le C_x.
\end{equation}
\end{proposition}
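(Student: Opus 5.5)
The plan is to split into two cases. If $x\notin\Sigma=X_{\Sing}\cup\Supp\Delta$, then near $x$ the space is a manifold and $\mu$ has a smooth positive density. In that case \eqref{eq: higher_moment_short} is the classical Demailly--Koll\'ar theorem \cite{DK01}, applied on a coordinate ball, together with the fact that $e^{-t u_j}\to e^{-tu}$ in $L^1$ there. The real case is $x\in\Sigma$. There I will use the setup of the previous section: the function $h$ vanishing on $\Sigma$, the domains \eqref{eq: proof_domains} (shrunk inside $G_0$), the slices $H_b(w)$ and the conditional measures $\mu_w$. Since $u_j\to u$ in $L^1_{\rm loc}$, the $u_j$ are locally uniformly bounded above, so after subtracting a constant I may assume $u_j,u\le 0$ on $U_1$. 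I fix $t<t_1<t_0$.

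\emph{Step 1 (reduction to a slice).} For $j$ large and a regular value $0<|w|<\eta_0$, I apply Lemma~\ref{lemma: adapted_slice_ext} with $v=u_j$. This gives $F_j\in\mathcal O(U_0)$ with $F_j=1$ on $H_0(w)$ and
\[
\int_{U_0}|F_j|^2e^{-tu_j}\,d\mu\le C_{\rm ext}\int_{H_0(w)}e^{-t_1u_j}\,d\mu_w=:C_{\rm ext}S_j(w).
\]
Because $u_j\le0$, Lemma~\ref{lemma: vol_domination_sup} then gives $\sup_{U_{-1}}|F_j|^2\le C_{\rm sup}C_{\rm dom}C_{\rm ext}S_j(w)=:M_j^2$. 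Lemma~\ref{lemma: quant_div} gives $|F_j(x)-1|\le C_{\rm div}|w|(M_j+1)$. If this quantity is at most $1/2$, then $|2F_j(x)|\ge1$ and the family $\{2F_j\}$ is uniformly bounded. Lemma~\ref{lemma:  normal_fam_est} then yields a fixed neighborhood $W_x$ on which $|F_j|\ge1/4$, and hence
\[
\int_{W_x}e^{-tu_j}\,d\mu\le 16C_{\rm ext}S_j(w).
\]

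\emph{Step 2 (uniform slice bounds).} It remains to choose $w=w_j$ so that $S_j(w_j)$ is bounded uniformly in $j$ and $|w_j|$ is small. I will work on an annulus $D=\{r<|w|<2r\}$ of regular values. Its preimage $h^{-1}(\overline D)\cap\overline U_0$ is a compact subset of $X_{\rm reg}\setminus\Supp\Delta$, where $\mu$ is smooth. Since $e^{-t_0u}\in L^1$ there, the classical Demailly--Koll\'ar theorem on this manifold region gives $\sup_{j\ge j_0}\int_{h^{-1}(D)\cap U_0}e^{-t_1u_j}\,d\mu\le B_r$. The coarea formula \eqref{eq: general_coarea_quotient} identifies this integral with $\int_D S_j(w)\,dA(w)$, with $\mu_w$ exactly as in \eqref{eq: conditional_measure_proof}. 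By Chebyshev, for each $j$ there is a $w_j\in D$ with $S_j(w_j)\le 2B_r/|D|$.

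\emph{Main obstacle.} The difficulty is compatibility: Step~1 needs $C_{\rm div}|w|(M_j+1)\le1/2$, while the bound from Step~2 is $M_j^2\lesssim B_r/r^2$, which may blow up as $r\to0$. My first attempt is to control $B_r$ by the bulk hypothesis. Strong openness (Proposition~\ref{prop: adapted_strong_opn}) combined with Demailly--Koll\'ar off $\Sigma$ should give $\int_{h^{-1}(D)}e^{-t_1u_j}d\mu\le\int_{h^{-1}(D)}e^{-t_1 u}d\mu+o(1)$, and the first term tends to $0$ as $r\to0$ by absolute continuity. So I would aim for $B_r=o(r^2)$ along a suitable sequence $r\to0$. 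If plain smallness is not enough, I would instead pick $r$ via the averaged distribution function $r\mapsto r^{-2}\int_{\{|h|<r\}}e^{-t_1u}d\mu$, which must be small along some sequence whenever the pushforward $h_*(e^{-t_1u}\mu)$ has no atom at $0$. With such an $r$, Steps~1 and~2 combine to give \eqref{eq: higher_moment_short}.
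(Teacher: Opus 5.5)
Your Steps 1 and 2 are sound. The genuine gap is in the final paragraph: it misdiagnoses the compatibility condition and then aims at a target that can never be met.

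Set $C:=C_{\rm sup}C_{\rm dom}C_{\rm ext}$. Chebyshev on the annulus gives $M_j^2\le 2CB_r/|D|$ with $|D|=3\pi r^2$, and $|w_j|<2r$. Hence $C_{\rm div}|w_j|(M_j+1)\le 2C_{\rm div}\bigl(\sqrt{2CB_r/(3\pi)}+r\bigr)$. The factor $|w|$ in Lemma~\ref{lemma: quant_div} exactly cancels the $1/r$ lost by averaging over an annulus of area comparable to $r^2$. So all you need is $B_r\to 0$ as $r\to 0$, and your ``first attempt'' already provides this. For $j\ge j_0(r)$ you may take $B_r:=2\int_{\{r<|h|<2r\}\cap U_0}e^{-t_1u}\,d\mu$, by Demailly--Koll\'ar near the compact set $h^{-1}(\overline D)\cap\overline U_0\subset X_{\rm reg}\setminus\Supp\Delta$. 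This tends to $0$ because $e^{-t_1u}\mu$ is a finite measure on $U_0$ that puts no mass on $\{h=0\}$. This is exactly the mechanism in the paper: the slice is chosen with $\int_{H_1(w)}e^{-t_0u}\,d\mu_w\le C\varepsilon^2/|w|^2$, which gives $\sup_{U_{-1}}|F_k|\le C\varepsilon/|w|$ and $|F_k(x)-1|\le C\varepsilon+C_{\rm div}|w|$.

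By contrast, $B_r=o(r^2)$ is impossible. Since $u_j\le 0$, $B_r\ge\mu(\{r<|h|<2r\}\cap U_0)$. For $X=\mathbb C^n$, $\Delta=d\{z_1=0\}$ with $0<d<1$, and $h=z_1$, this is of order $r^{2-2d}\gg r^2$. Your fallback claim is also false: a measure $\nu$ on $\mathbb C$ with no atom at $0$ need not have $r^{-2}\nu(\{|w|<r\})$ small along any sequence (Lebesgue measure gives $\pi$).

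As written, then, your argument ends on false statements. The repair is the computation above, with choices made in this order. First fix $r$ small, with $2r<\min\{\eta_0,\eta_{\rm div}\}$. Then set $j_x:=j_0(r)$. Then, for each $j\ge j_x$, pick a regular value $w_j\in D$ with $S_j(w_j)\le 2B_r/|D|$. By Sard almost every $w$ is regular, so you do not need the whole annulus to consist of regular values.

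With this fix, your argument is a valid alternative to the paper's. The paper argues by contradiction and extracts a subsequence with $\int_{U_1}|u_k-u|\,dV_X<2^{-k}$, so that $u_k\to u$ in $L^1$ on almost every slice. It then fixes \emph{one} slice $w$, selected using only the bulk integrability of $e^{-t_0u}$, and applies Demailly--Koll\'ar on the $(n-1)$-dimensional slice $H_1(w)$. Because $w$ is fixed, uniform boundedness of $(F_k)$ is automatic.

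You instead apply Demailly--Koll\'ar in dimension $n$ on $\{r<|h|<2r\}$, which already lies in $X_{\rm reg}\setminus\Supp\Delta$, and pass to slices by coarea and Chebyshev. The price is a $j$-dependent slice $w_j$. That is harmless: $C_{\rm ext}$, $C_{\rm div}$, $C_{\rm sup}$, $C_{\rm dom}$ are independent of $w$. The normal family lemma only uses $|2F_j(x)|\ge 1$ and the uniform bound $\sup_{U_{-1}}|F_j|\le\sqrt{2CB_r/|D|}$.

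Your route avoids the contradiction argument, the fast subsequence, and the restriction of $L^1$ convergence to slices. Strong openness is also unnecessary in your Step 2, since $t_1<t_0$ already suffices for Demailly--Koll\'ar there.
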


\begin{proof}
Since $u_j\to u$ in $L^1_{\rm loc}(G)$, Hartogs' lemma gives a uniform
upper bound for $u_j$ on $\overline{G_0}$.  Since $u$ is also bounded above
there, after subtracting a common constant we may assume
\begin{equation}
\label{eq: negative_norm}
         u_j\le0,\qquad u\le0
        \quad\text{on }G_0.
\end{equation}

If $x\notin\Sigma$, the assertion follows from the classical
Demailly--Koll\'ar theorem \cite[Main Theorem~0.2(2)]{DK01}.  We assume
$x\in\Sigma$, and fix
\[
         t<t_1<t_0.
\]
We choose the local data from \eqref{eq: proof_domains} so that
\[
   x\in U_0\Subset U_1\Subset U_2\Subset G_0.
\]
Applying Lemma~\ref{lemma: quant_div} to $h$ on $U_0$, we fix
\[
 x\in U_{-1}\Subset U_0,\qquad
 \eta_{\rm div}>0,\qquad C_{\rm div}<\infty.
\]
We also fix the constants of Lemma~\ref{lemma: adapted_slice_ext} for $U_0, U_1$: 
$$\eta_0,C_{\rm ext}.$$ 
Below, $C>0$ denotes a constant
depending only on this fixed local data and on the exponents $t,t_1,t_0$, and may
change from line to line.

Suppose, toward a contradiction, that \eqref{eq: higher_moment_short}
fails.  We choose a decreasing neighborhood basis $W_k\downarrow\{x\}$ with
$W_k\Subset U_{-1}$.  Passing to a subsequence, still denoted $u_k$, we
may arrange that
\begin{equation}
\label{eq: bad subsequence_short}
 \int_{U_1}|u_k-u|\,dV_X<2^{-k},
 \qquad
 \int_{W_k}e^{-tu_k}\,d\mu>k.
\end{equation}

By Fubini's theorem and a standard measure theoretic argument, it follows that, for almost every regular value $w$ satisfying $0<|w|<\eta_0$,
\begin{equation}
\label{eq: slice_l1_short}
u_k|{H_1(w)}\longrightarrow u|{H_1(w)}
\quad\text{in }L^1\bigl(H_1(w)\bigr).
\end{equation}

Using $d\mu=\rho\,dV_X$ and
\[
    d\mu_w=\frac{\rho}{|dh|^2}\,dV_{H_1(w)},
\]
the coarea formula \eqref{eq: general_coarea_quotient}   gives
\begin{equation}
\label{eq: weighted_coarea}
 \int_{|w|<\eta}
\left(\int_{H_1(w)}e^{-t_0u}\,d\mu_w\right)dA(w)
 =
 \int_{\{|h|<\eta\}\cap U_1}e^{-t_0u}\,d\mu.
\end{equation}
We note that the right-hand side tends to zero as $\eta\searrow 0$. Indeed,
$e^{-t_0u}\mu$ is finite on $U_1$ by
\eqref{eq: t0 integrability_short}, and it puts no mass on $\{h=0\}$.

We apply Lemma~\ref{lemma: vol_domination_sup} to
\[
        U_{-1}\Subset U_0\Subset U_1\Subset U_2.
\]
Using \eqref{eq: negative_norm}, for 
$F\in\mathcal O(U_0)$ we obtain that
\begin{equation}
\label{eq: combined sup}
\sup_{U_{-1}}|F|^2
\le C\int_{U_0}|F|^2\,d\mu
\le C\int_{U_0}|F|^2e^{-tu_k}\,d\mu.
\end{equation}

We fix $\varepsilon>0$, to be specified later.  We choose
$ 0<\eta := \eta_\varepsilon<\min\{\eta_0,\eta_{\rm div}\}$
sufficiently small so that
\begin{equation}
\label{eq: eta_choice_short}
 \int_{\{|h|<\eta\}\cap U_1}e^{-t_0u}\,d\mu\le\varepsilon^2.
\end{equation}
Averaging \eqref{eq: weighted_coarea} over
$\eta/2<|w|<\eta$, there exists  $w: = w_\eta$ inside the annulus $A(\frac{\eta}{2},\eta)$ such that 
\eqref{eq: slice_l1_short} holds, $w$ is regular for $h$, 
and
\begin{equation}
\label{eq: good_slice_short}
\int_{H_1(w)}e^{-t_0u}\,d\mu_w
       \le C\frac{\varepsilon^2}{|w|^2}.
\end{equation}
Since $\eta<\eta_{\rm div}$, Lemma~\ref{lemma: quant_div} also
gives $H_0(w)\neq\varnothing$.

For such values $w$, the measure $\mu_w$ is boundedly
equivalent to a smooth positive hypersurface volume near
$H_1(w)\cap\overline U_0$.  By \eqref{eq: slice_l1_short} and the
classical Demailly--Koll\'ar theorem, applied on a finite cover at the
exponent $t_1<t_0$,
\[
 e^{-t_1u_k}\to e^{-t_1u}
 \quad\text{in }L^1(\mu_w)
\]
on a neighborhood of $H_1(w)\cap\overline U_0$.  Since
$u\le0$ and $t_1<t_0$, we have $e^{-t_1u}\le e^{-t_0u}$; hence, for all
large $k$,
\begin{equation}
\label{eq: slice_bound_short}
 \int_{H_0(w)}e^{-t_1u_k}\,d\mu_w
 \le2\int_{H_1(w)}e^{-t_0 u}d\mu_w
 \le C\frac{\varepsilon^2}{|w|^2}.
\end{equation}

Lemma~\ref{lemma: adapted_slice_ext}, applied with $v=u_k$, now gives
$F_k\in\mathcal O(U_0)$ such that
\begin{equation}
\label{eq: extension_bound_short}
 F_k=1\quad\text{on }H_0(w),
 \qquad
 \int_{U_0}|F_k|^2e^{-tu_k}\,d\mu
 \le C\frac{\varepsilon^2}{|w|^2}.
\end{equation}
Combining \eqref{eq: combined sup} and
\eqref{eq: extension_bound_short}, we get
\begin{equation}
\label{eq: sup_bound_short}
             \sup_{U_{-1}}|F_k|
             \le C\frac{\varepsilon}{|w|}.
\end{equation}

The constant $C$ on the right hand side does not depend on $F_k, u_k, w,\eta$ or $\varepsilon$. Knowing this, we  now shrink  $\varepsilon>0$ and 
$\eta>0$ to ensure
\[
 C_{\rm div} C\varepsilon<\frac18,
 \qquad
 C_{\rm div}\eta<\frac18.
\]
Additionally, we now fix a regular value $w \in A(\frac{\eta}{2},\eta)$ satisfying all the above conclusions.  Lemma~\ref{lemma: quant_div} then yields
\[
\begin{aligned}
 |F_k(x)-1|
 &\le C_{\rm div}|w|
       \left(C\frac{\varepsilon}{|w|}+1\right)\le C\varepsilon+C_{\rm div}|w|
 <\frac14.
\end{aligned}
\]
Thus
\[
   |F_k(x)|\ge\frac34.
\]

Since $w$ is fixed, \eqref{eq: sup_bound_short} shows that $(F_k)$ is
uniformly bounded on $U_{-1}$.  Then
$|\frac{4}{3}F_k(x)|\ge1$, so Lemma~\ref{lemma:  normal_fam_est} gives a fixed
neighborhood $x\in W\Subset U_{-1}$ such that $|F_k|\ge\frac38$ on $W$.  Therefore
\[
 \int_W e^{-tu_k}\,d\mu
 \le\frac{64}{9}\int_{U_0}|F_k|^2e^{-tu_k}\,d\mu
 \le C_w,
\]
where $C_w<\infty$ is independent of $k$.  For all sufficiently large
$k$, we have $W_k\subset W$ and $k>C_w$, contradicting
\eqref{eq: bad subsequence_short}.  This proves the proposition.
\end{proof}

\begin{proof}[Proof of Theorem~\ref{thm: main_klt}]
Proposition~\ref{prop: adapted_strong_opn}, applied to
\eqref{eq: main_integral_bound}, first gives
$\lambda<c_{\mu,K}[u]$.
We fix $\lambda<t<s < c_{\mu,K}[u]$ and
$K\subset U\Subset\Omega$ such that
\[
                         \int_Ue^{-su}\,d\mu<\infty,
\]
 Since $\mu(U)<\infty$, H\"older's inequality also
gives $e^{-tu},e^{-\lambda u}\in L^1(U,\mu)$.
Proposition~\ref{prop: local_higher_moment}, applied
at finitely many points giving a covering of $K$, gives
$K\subset W\Subset U$, $j_0$, and $C<\infty$ such that
\[
 e^{-tu}\in L^1(W,\mu),
 \qquad
 \sup_{j\ge j_0}\int_We^{-tu_j}\,d\mu\le C.
\]

It remains to prove the $L^1$-convergence $e^{-\lambda u_j}\to e^{-\lambda u}$.  On a finite collection of
local embeddings covering $\overline W$, we patch the Euclidean K\"ahler
volumes by a partition of unity, to obtain one finite reference measure
$dV$ on $W$.

Since  $d\mu=g\,dV$ with $g\in L^1(dV)$, we get that
$e^{-\lambda u_j}\to e^{-\lambda u}$ in $\mu$-measure as well.  This means that for any $k \in \Bbb N$ 

$$\mu(R_{k,j}):= \mu \{x \in W, \ |e^{-\lambda u_j(x)}- e^{-\lambda u(x)}| > 1/k\} \to 0, \ \ \ j \to \infty.$$

For
$q:=t/\lambda>1$ we have
\[
 \sup_{j\ge j_0}\int_W(e^{-\lambda u_j})^q\,d\mu
 =\sup_{j\ge j_0}\int_We^{-tu_j}\,d\mu\le C  \ \ \textup{ and } \ \ \int_W (e^{-\lambda u})^q d\mu \leq C.
\]
Thus, using H\"older's inequality with exponents $(q,q/(q-1))$, the Vitali type argument gives that 
\begin{flalign*}
\int_{W} |e^{-\lambda u_j(x)}- e^{-\lambda u(x)}| d\mu &= \bigg(\int_{W \setminus  R_{k,j}} + \int_{R_{k,j}}\bigg)|e^{-\lambda u_j(x)}- e^{-\lambda u(x)}| d\mu\\
&\leq C \mu(R_{k,j})^{\frac{q-1}{q}} + \frac{1}{k} \mu(W)\to \frac{1}{k} \mu(W), \ \ j \to \infty.
\end{flalign*}
We conclude that  $e^{-\lambda u_j}\to e^{-\lambda u}$ in $L^1(W,\mu)$.
\end{proof}

\section{Twisted alpha invariants}

Let $X$ be a compact normal K\"ahler space and let
$\Delta\geq0$ be an effective $\mathbb R$-Weil divisor such that
$K_X+\Delta$ is $\mathbb R$-Cartier and $(X,\Delta)$ is klt. Let $\psi$ be a qpsh function on $X$ such that $e^{-\psi} d\mu$ has unit mass, where $d\mu$ is an adapted measure for $(X,\Delta)$.

\begin{theorem}[=Theorem \ref{thm: alpha_eq alpha_A}]\label{thm: alpha_eq alpha_A_later} Let $(X,\Delta)$ and $\psi$ be as above. Let $\theta$ be a smooth closed $(1,1)$-form on $X$ such that $\{\theta\}$ is a big class. Then
$$\alpha_\psi(X,\Delta,\{\theta\}) = \alpha_\psi^A(X,\Delta,\{\theta\}).$$
\end{theorem}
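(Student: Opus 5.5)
We prove the two inequalities $\alpha^A_\psi\le\alpha_\psi$ and $\alpha_\psi\le\alpha^A_\psi$ separately. The first is the elementary direction. Fix a prime divisor $E$ over $X$, realized on a log resolution $\pi:Y\to X$, and a function $u\in\PSH_\theta$ with $\sup_X u=0$ and generic Lelong number $\nu(u,E)=\nu$. Suppose $\int_X e^{-\alpha u-\psi}d\mu<\infty$. Pulling back to $Y$ and working near a generic point of $E$, the density of $\tilde\mu$ is comparable to $|z_1|^{2A_{(X,\Delta)}(E)-2}$. Here $E=\{z_1=0\}$ and $A_{(X,\Delta)}(E)=a_E+1$. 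Moreover $e^{-\alpha u\circ\pi-\psi\circ\pi}\gtrsim |z_1|^{-2(\alpha\nu+\nu(\psi,E))-\epsilon}$ on a set of positive measure, using Siu's decomposition and the convention of \cite[(13)]{DZ22} for Lelong numbers. Integrability therefore forces $\alpha\,\nu(u,E)\le A_{X,\Delta,\psi}(E)$. Taking the supremum over $u$ and using $\sup_u\nu(u,E)=\tau_{\{\theta\}}(E)$ (standard for big classes, \cite{DZ22}), we get $\alpha\le A_{X,\Delta,\psi}(E)/\tau_{\{\theta\}}(E)$ for every $\alpha<\alpha^A_\psi$. This gives $\alpha^A_\psi\le\alpha_\psi$.

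For the reverse inequality, fix $\alpha<\alpha_\psi$ and suppose, for contradiction, that $\sup\{\int_X e^{-\alpha u-\psi}d\mu:\ u\in\PSH_\theta,\ \sup_X u=0\}=\infty$. Choose a sequence $u_j$ with $\int e^{-\alpha u_j-\psi}d\mu\to\infty$. By compactness of $\{u\in\PSH_\theta:\sup_X u=0\}$ in $L^1$, after passing to a subsequence $u_j\to u$ in $L^1$, with $\sup_X u=0$. Pick $\alpha<\alpha'<\alpha_\psi$. The key claim is that $e^{-\alpha' u-\psi}\in L^1(\mu)$. Granting this, we apply Theorem~\ref{thm: main_klt} to $u_j\to u$ with $\lambda=\alpha$ on a finite cover of $X$ by compact pieces $K$. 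To absorb the twist $e^{-\psi}$, write $\psi$ locally as a psh function plus a smooth function and use Hölder's inequality: integrability at exponent $\alpha'$ for $u$ together with $e^{-\psi}\in L^{p}(\mu)$ for some $p>1$ (strong openness, Proposition~\ref{prop: adapted_strong_opn}) controls $e^{-\alpha u-\psi}$. The cleaner alternative is to apply Theorem~\ref{thm: main_klt} to the combined psh functions $\alpha u_j+\psi$ (locally, after absorbing the smooth parts) with exponent $1$. These converge in $L^1_{\rm loc}$ to $\alpha u+\psi$. Theorem~\ref{thm: main_klt} then gives $\int e^{-\alpha u_j-\psi}d\mu\to\int e^{-\alpha u-\psi}d\mu<\infty$, contradicting the choice of $u_j$.

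The main obstacle is the claim that $e^{-(\alpha' u+\psi)}\in L^1(\mu)$ for every $u\in\PSH_\theta$ and every $\alpha'<\alpha_\psi$, that is, that the valuative invariant controls integrability of each individual function. The approach is the valuative criterion for integrability. On a log resolution, the weight $\alpha' u+\psi$ together with the discrepancy divisor gives a measure $e^{-\alpha'u\circ\pi-\psi\circ\pi}|z|^{2A}dV$. By the Boucksom--Favre--Jonsson valuative characterization of multiplier ideals, which via Guan--Zhou strong openness extends to qpsh weights and to klt pairs as in \cite[Appendix B]{BBJ21}, local integrability is equivalent to $\alpha'\nu(u,E)+\nu(\psi,E)<A_{(X,\Delta)}(E)$ for all divisors $E$ over $X$, with uniform slack. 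This follows from $\alpha'\nu(u,E)\le\alpha'\tau_{\{\theta\}}(E)<\alpha_\psi\tau_{\{\theta\}}(E)\le A_{X,\Delta,\psi}(E)$, provided the infimum defining $\alpha_\psi$ yields a uniform ratio. Strictness and uniformity are exactly what the valuative criterion requires: one needs $\inf_E A_{X,\Delta,\psi}(E)/\nu(\alpha'u+\psi,E)$-type bounds exceeding $1$, and the gap $\alpha'<\alpha_\psi$ supplies the factor $\alpha_\psi/\alpha'>1$. Finally, $\nu(\psi,E)$ enters additively, so one must check that $\nu(\alpha' u+\psi,E)=\alpha'\nu(u,E)+\nu(\psi,E)$, which holds for generic Lelong numbers.
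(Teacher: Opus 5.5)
Your proposal is correct and is essentially the paper's argument: $\alpha^A_\psi\le\alpha_\psi$ via the easy half of the log valuative criterion, and the converse via the valuative criterion at an intermediate exponent $\alpha'\in(\alpha,\alpha_\psi)$, Hartogs compactness of $\{u\in\PSH_\theta:\sup_X u=0\}$, and Theorem~\ref{thm: main_klt} applied to the local psh functions $\alpha u_j+\psi$ (the paper uses $u_j+\lambda^{-1}\psi$ at exponent $\lambda$, which is the same thing). Two minor points: the H\"older aside is unnecessary and would not work as written, since strong openness only gives $e^{-\psi}\in L^{1+\epsilon}(\mu)$ and so the conjugate exponent on $e^{-\alpha u_j}$ is too large; and in the slack computation the right ratio is $A_{X,\Delta,\psi}(E)/(\alpha'\nu(u,E))\geq \alpha_\psi/\alpha'$, not one with $\nu(\alpha'u+\psi,E)$ in the denominator, and you then convert it to $\nu(\alpha'u+\psi,E)\leq(1-\epsilon)A_{(X,\Delta)}(E)$ using $\sup_E \nu(\psi,E)/A_{(X,\Delta)}(E)<1$, which follows from the integrability of $e^{-\psi}d\mu$.
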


\begin{proof} Suppose that $\lambda < \alpha^A_\psi(X,\Delta,\{\theta\})$. This implies that
$$\int_X e^{-\lambda u - \psi} d\mu < C_\lambda $$
for $u \in \PSH_\theta$ with $\sup_X u =0.$

If $E \subset Y$ is a prime divisor, where $\pi: Y \to X$ is a log resolution of $(X,\Delta)$, then the (log) valuative criterion for integrability \cite[Appendix B]{BBJ21} gives
$$\lambda < \frac{A_{X,\Delta,\psi}(E)}{\nu(u,E)}.$$
Alternatively, one can also use \cite[Theorem 2.3]{DZ24} for the potentials $u \circ \pi,\psi\circ \pi$ and $\pi^* d\mu$ on $Y$, to conclude the above estimate. Taking infimum over $u$ and then $E$, we obtain that $\lambda \leq \alpha_\psi(X,\Delta,\{\theta\}).$

Conversely, suppose that $\lambda < \alpha_\psi(X,\Delta,\{\theta\})$. 
Let $\lambda' \in (\lambda, \alpha_\psi(X,\Delta,\{\theta\}))$. By the same log valuative criterion, we obtain that  $\int_X e^{-\lambda 'u -\psi} d\mu < \infty$ for all $u \in \PSH_\theta$.
We claim that there exists $C_\lambda$ such that 
\begin{equation}\label{eq:  alpha_ineq}
\int_X e^{-\lambda u - \psi} d\mu < C_\lambda 
\end{equation}
for $u \in \PSH_\theta$ with $\sup_X u =0.$ 
Indeed, if this were not the case, then there would exist $u_j \in \PSH_\theta$ with  $\sup_X u_j =0$, such that $\int_X e^{-\lambda u_j - \psi} d\mu> j$.

By Hartogs' theorem, after possibly passing to a subsequence, we can assume that $u_j \to u \in \PSH_\theta$. Applying Theorem~\ref{thm: main_klt} to the quasi-psh functions $u_j+\lambda^{-1}\psi$ and $u+\lambda^{-1}\psi$, we obtain $\int_X e^{-\lambda u_j - \psi} d\mu \to \int_X e^{-\lambda u - \psi} d\mu < \infty$, a contradiction. Thus \eqref{eq:  alpha_ineq} must hold and  $\lambda \leq \alpha^A_\psi(X,\Delta,\{\theta\}).$
\end{proof}

\section{Twisted delta invariants}

\subsection{Geodesic preliminaries}

We recall some facts about the Ross--Witt Nystr\"om correspondence \cite{RWN14} for weak subgeodesic rays. For more details we refer to \cite[Section 3]{DZ24}.

We recall the Legendre transform, which establishes the duality between various types
of maximal test curves and geodesic rays. Given a convex function
$
f\colon [0,\infty)\to \mathbb{R},
$
its Legendre transform is defined by
\begin{equation}
\label{eq:  Legendre_def}
\widehat{f}(\tau)
:=
\inf_{t\geq 0}\bigl(f(t)-t\tau\bigr)
=
\inf_{t>0}\bigl(f(t)-t\tau\bigr),
\qquad \tau\in\mathbb{R}.
\end{equation}
The inverse Legendre transform of a decreasing concave function
$
g\colon\mathbb{R}\to\mathbb{R}\cup\{-\infty\}
$
is defined by
\[
\check{g}(t)
:=
\sup_{\tau\in\mathbb{R}}\bigl(g(\tau)+t\tau\bigr),
\qquad t\geq 0.
\]
Careful readers will notice a sign difference between our convention for the Legendre transform and
the one commonly used in the convex analysis literature \cite{Rock70}. However, our convention
is better suited to the discussion of subgeodesic rays.

Let us start with a technical lemma:

\begin{lemma}
Let $f_j,f:[0,\infty) \to (-\infty,\infty]$ be lsc convex functions such that   $\lim_{t \to \infty}\frac{f(t)}{t} =0$, $f_j(0), f(0)\in\mathbb R$ and $f_j \to f$ uniformly on compact subsets of $[0,\infty)$. Then for any $\tau<0$ we have that $\hat f_j(\tau) \to \hat f(\tau).$
\end{lemma}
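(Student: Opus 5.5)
Fix $\tau<0$. The plan is to show that the infimum defining $\hat f_j(\tau)$ and $\hat f(\tau)$ may be taken over a fixed compact interval $[0,T]$, independent of $j$. Once that is done, uniform convergence on $[0,T]$ immediately gives
\[
|\hat f_j(\tau)-\hat f(\tau)|\le \sup_{[0,T]}|f_j-f|\to 0,
\]
since the infimum of $g(t)-t\tau$ over $[0,T]$ is $1$-Lipschitz in the sup norm of $g$.

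\textbf{Choice of $T$ and the bound for $f$.} Set $\epsilon:=-\tau/2>0$. Since $f(t)/t\to 0$, there is $T_0>0$ with $f(t)\ge -\epsilon t$ for $t\ge T_0$. Then
\[
f(t)-t\tau\ge \epsilon t \quad \text{for } t\ge T_0,
\]
which exceeds $f(0)+1$ once $t\ge T:=\max\{T_0,\,(|f(0)|+1)/\epsilon\}$. So for $f$ the infimum is attained on $[0,T]$.

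\textbf{The main obstacle: uniform control of $f_j$ beyond $T$.} Only compact uniform convergence is available, so the tails of $f_j$ must be controlled by convexity. Fix $T_1\ge T$, for instance $T_1=2T$, and look at the points $0$ and $T_1$. For $j$ large we have $|f_j(0)-f(0)|\le 1/2$ and $|f_j(T_1)-f(T_1)|\le \epsilon T_1/4$. Define $g_j(t):=f_j(t)-t\tau$; it is convex. The plan is to show $g_j(T_1)>g_j(0)$ with a margin, because then the slope of $g_j$ on $[0,T_1]$ is positive, and convexity forces $g_j$ to be increasing on $[T_1,\infty)$, indeed $g_j(t)\ge g_j(T_1)$ for all $t\ge T_1$. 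The margin comes from
\[
g_j(T_1)\ge f(T_1)-T_1\tau-\epsilon T_1/4\ge \epsilon T_1-\epsilon T_1/4,
\qquad
g_j(0)\le f(0)+1/2.
\]
Choosing $T_1$ so large that $\tfrac34\epsilon T_1>|f(0)|+1$ gives $g_j(T_1)>g_j(0)$ for all large $j$.

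\textbf{Conclusion.} It follows that $\inf_{t\ge0}g_j=\inf_{[0,T_1]}g_j$ for large $j$, and likewise for $f$. Both infima are finite, since the functions are finite on $[0,T_1]$ by uniform convergence to $f$, which is finite there. The uniform convergence on $[0,T_1]$ then gives the claim. The only subtlety to keep in mind is that the $f_j$ may take the value $+\infty$ far out; this does not affect the infimum, and convexity together with lower semicontinuity still yields the monotonicity argument on the region where they are finite.
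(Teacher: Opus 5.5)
Your proof is correct and follows essentially the same route as the paper: convexity plus a comparison at $0$ and at one large point confines the infimum to a fixed compact interval, and uniform convergence there finishes the argument. The only difference is cosmetic: the paper normalizes $f_j(0)=f(0)=0$ and uses monotone secant slopes from the origin to get $f_j(t)\ge -2\varepsilon t$ beyond $t_\varepsilon$, whereas you apply the three-slope inequality directly to $g_j=f_j-t\tau$ without normalizing.
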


\begin{proof} Subtracting $f_j(0)$ from $f_j$  and $f(0)$ from $f$, we may assume without loss of generality that $f_j(0)=f(0)=0$.

Since $\lim_{t \to \infty}\frac{f(t)}{t} =0$, for any $\varepsilon \in (0,-\frac{\tau}{2})$ fixed we have $f(t) \geq -\varepsilon t$ for $t \geq t_\varepsilon$.

Using convexity, $f_j(0)=f(0)=0$, and uniform convergence on compacts, for $j \geq j_\varepsilon$ we must have $f_j(t) \geq -2 \varepsilon t, \ t \geq t_\varepsilon$. Thus, the infimum of $t \to f_j(t) - \tau t$ on $[0,\infty)$ must be obtained on $[0,t_\varepsilon]$, since $f_j(t) - \tau t \geq (-\tau-2 \varepsilon) t, \ t \geq t_\varepsilon$.

But using uniform convergence on $[0,t_\varepsilon]$ we obtain that:
$$\hat f_j(\tau) = \inf_{t \in [0,\infty)}(f_j(t) - \tau t)=\inf_{t \in [0,t_\varepsilon]}(f_j(t) - \tau t) \to \inf_{t \in [0,t_\varepsilon]}(f(t) - \tau t) =\inf_{t \in [0,\infty)}(f(t) - \tau t)=\hat f(\tau).$$  
\end{proof}

Let $(X,\omega)$ be a compact normal K\"ahler space with a big cohomology class $\{\theta\}$. By $\PSH_\theta$ and $\mathcal E^1_\theta$ we denote the space of $\theta$-psh functions and finite energy $\theta$-psh functions, as introduced in \cite{BEGZ10}. Recall that $V_\theta \in \PSH_\theta$ is the $\theta$-psh function with minimal singularity type:
$$V_\theta := \sup \{v \in \PSH_\theta, \ v\leq 0\}.$$

Recalling the definition from \cite{DZ24}, a \emph{sublinear subgeodesic ray} is a subgeodesic
\begin{equation}\label{eq:  subgeod}
(0,\infty)\ni t\mapsto u_t\in\operatorname{PSH}_\theta
\qquad\text{(notation: $\{u_t\}$)}
\end{equation}
satisfying  $u_t\xrightarrow{L^1}u_0:=V_\theta$ as $t\to 0$, and there exists a constant \(C\in\mathbb{R}\) such that
$
u_t(x)\leq Ct, \  t\geq 0, \ x\in X.
$
In addition, \(\{u_t\}\) is said to be \emph{of finite energy} if
$
u_t\in\mathcal{E}^1_\theta, t\geq 0.
$

We refer to \cite{DDNL3} for the connections between finite energy geodesics and the $d_1$ metric geometry of $\mathcal E^1_\theta$.

\medskip

A \emph{psh geodesic ray} is a sublinear subgeodesic ray that additionally
satisfies a maximality property: for any \(0<a<b\), the
subgeodesic
\[
(0,1)\ni t\mapsto
v_t^{a,b}:=u_{a(1-t)+bt}\in\operatorname{PSH}_\theta
\]
can be recovered as
\begin{equation}
\label{eq: geodesic_envelope}
v_t^{a,b}
=
\sup_{h\in\mathcal{S}}h_t,
\qquad t\in[0,1],
\end{equation}
where \(\mathcal{S}\) is the set of subgeodesic segments
$
(0,1)\ni t\mapsto h_t\in\operatorname{PSH}_\theta
$
satisfying
$
\lim_{t\searrow 0}h_t\leq u_a, \ 
\lim_{t\nearrow 1}h_t\leq u_b.$
The space of finite energy psh geodesic rays will be denoted by
$
\mathcal{R}^1_\theta.
$

Adapting \eqref{eq:  Legendre_def}, the Legendre transform of a subgeodesic is defined as follows:
\begin{equation}\label{eq:  Leg_transf_seg_ray}
    \begin{aligned}
\hat u_\tau(x) := \inf_{t}(u_t(x) - t\tau), \ \ \  \tau \in \mathbb R, \ x \in X.
\end{aligned}
\end{equation}
By Kiselman's minimum principle, these potentials $\hat u_\tau$ are either $-\infty$ identically or they are elements of $\PSH_\theta$. \medskip

Later we will need the following approximation result, about subgeodesic segments converging to subgeodesic rays:

\begin{proposition}\label{prop:  L^1_conv_Lagrange_dual} Suppose that $[0,l_j] \ni t \to u^j_t \in \mathcal{E}^1_\theta$ are subgeodesics with $\sup_X u^j_t =0$ and $l_j \nearrow \infty$. Let $[0,\infty) \ni t \to u_t \in \mathcal{E}^1_\theta$ be a subgeodesic ray with $u^j(t,x) := u^j_t(x) \to_{L^1} u(t,x) : = u_t(x)$ locally on $(0,\infty) \times X$. Additionally, we assume that $u_t\to V_\theta$ in $L^1(X)$ as $t\to0$ and that $u_0^j=V_\theta$ for every $j$.

Then, up to further taking subsequences, for any $\tau < 0$ we have that $\hat u^j_\tau \to_{L^1(X)} \hat u_\tau$.
\end{proposition}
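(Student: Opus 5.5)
The plan is to pass to a diagonal subsequence along which the Legendre transforms converge for all rational $\tau<0$. I then identify the limit with $\hat u_\tau$ by two inequalities: the easy one comes from the definition of $\hat u$ as an infimum, and the hard one from Legendre biduality.

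\textbf{Compactness.} For fixed $x$, the map $t\mapsto u^j_t(x)$ is convex, since a subgeodesic is psh in the complexified time variable and independent of its imaginary part. Hence, by Kiselman's minimum principle, each $\hat u^j_\tau$ is either $\equiv-\infty$ or lies in $\PSH_\theta$. Since $u^j_t\le 0$, we have $\hat u^j_\tau\le u^j_t-t\tau$ for every $t\in[0,l_j]$, so $\sup_X\hat u^j_\tau\le 0$.

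To bound $\hat u^j_\tau$ from below, I use convexity together with $u^j_0=V_\theta$. The slope $(u^j_t-V_\theta)/t$ is nondecreasing in $t$. Fix $T>0$ so large that $\sup_X u_T\ge \tau T/4$; this is possible because the $L^1$ limit ray is sublinear and normalized. On the set where $u^j_T\ge V_\theta+\tau T/2$, the function $t\mapsto u^j_t-t\tau$ is then increasing for $t\ge T$, so the infimum is attained on $[0,T]$. Hartogs' lemma applied to $u^j_T\to u_T$ shows that this set has uniformly positive measure. Consequently $\sup_X \hat u^j_\tau$ stays bounded below, i.e.\ the sequence does not drift to $-\infty$.

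By the compactness of $\{v\in\PSH_\theta: -C\le\sup_X v\le 0\}$, a diagonal argument gives a subsequence with $\hat u^j_\tau\to w_\tau$ in $L^1(X)$ for all rational $\tau<0$. The limit $\tau\mapsto w_\tau$ is decreasing and concave, as a limit of such families. By monotonicity and concavity in $\tau$, I extend the convergence to all real $\tau<0$, using $L^1$-equicontinuity of concave functions in the $\tau$-variable.

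\textbf{Upper bound $w_\tau\le\hat u_\tau$.} For fixed $t$, the inequality $\hat u^j_\tau\le u^j_t-t\tau$ passes to the $L^1$ limit, giving $w_\tau\le u_t-t\tau$. Here I use that $u^j\to u$ in $L^1_{\rm loc}((0,\infty)\times X)$, Fubini for a.e.\ $t$, and then all $t$ by convexity and usc. Taking the infimum over $t$ yields $w_\tau\le\hat u_\tau$.

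\textbf{Reverse inequality via biduality.} For each fixed $x$, the function $t\mapsto u^j_t(x)$ is convex and lsc on $[0,l_j]$, so it equals its biconjugate:
\[
u^j_t=\sup_{\tau}\bigl(\hat u^j_\tau+t\tau\bigr).
\]
Since $u^j_t\le 0$, only $\tau\le 0$ contributes for $t$ large. I then want to pass to the limit in $j$ and obtain
\[
u_t\le \Bigl(\sup_{\tau<0}(w_\tau+t\tau)\Bigr)^*,
\]
with $\tau=0$ handled as a limit from the left. Once this holds, applying $\hat{\ }$ and using that $\tau\mapsto w_\tau$ is concave and usc gives $\hat u_\tau\le w_\tau$, since the Legendre transform reverses order and $\hat{\check w}=w$.

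\textbf{Main obstacle.} The delicate step is exchanging $\limsup_j$ with the supremum over the uncountable family of $\tau$. I would first restrict $\tau$ to a compact interval $[-M,-\delta]$. For $\tau<-M$, the terms $\hat u^j_\tau+t\tau\le t\tau$ are negligible. For $\tau\in(-\delta,0)$, I would use the normalisation $u_t\to V_\theta$ as $t\to0$ and the sublinearity of the ray, which is the same mechanism as the hypothesis $f(t)/t\to0$ in the preceding technical lemma.

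On the compact $\tau$-interval, monotonicity in $\tau$ allows me to approximate the supremum by a finite rational grid with error $t\cdot(\text{mesh})$. For a finite family, Hartogs' lemma in its $L^1$ form gives $\limsup_j\max_k\hat u^j_{\tau_k}\le\max_k w_{\tau_k}$ a.e., up to an arbitrarily small error. Letting the mesh tend to $0$ and $\delta\to0$, $M\to\infty$ completes the argument.
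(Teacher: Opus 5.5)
Your argument is correct in outline, but it takes a genuinely different route from the paper. The paper argues pointwise. By \cite[Lemma~3.2]{DZ24} and $t$-convexity, for a.e.\ $x$ the convex functions $t\mapsto u^j_t(x)$ converge to $t\mapsto u_t(x)$ uniformly on compacts of $[0,\infty)$; this is where $u_t\to V_\theta$ as $t\to0$ is used. Since $u_t\to u_\infty\in\PSH_\theta$ as $t\to\infty$, one also has $u_t(x)/t\to0$ a.e. The scalar lemma preceding the proposition then gives $\hat u^j_\tau(x)\to\hat u_\tau(x)$ a.e., and the normalization $\sup_X\hat u^j_\tau=\sup_X\hat u_\tau$ together with Hartogs' lemma upgrades this to $L^1$.

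You instead stay at the $L^1$ level throughout. You extract limits $w_\tau$, get $w_\tau\le\hat u_\tau$ from $\hat u^j_\tau\le u^j_t-t\tau$, and get the reverse inequality from Legendre biduality with a finite $\tau$-grid. This needs neither locally uniform convergence in $t$ at a.e.\ point nor continuity at $t=0$, and the sublinearity of the limit comes out as a consequence rather than an input. The price is more bookkeeping (diagonal extraction, extension to real $\tau$, discretization of the supremum), whereas the paper pushes all the work into a one-variable lemma.

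Three steps need tightening.

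\emph{Compactness step.} Knowing that the infimum defining $\hat u^j_\tau$ is attained on $[0,T]$ gives nothing without a uniform lower bound for $u^j_t$ on $[0,T]$. Convexity together with $u^j_0=V_\theta$ only gives upper bounds there, and Hartogs' lemma cannot make superlevel sets large. The fix is to use $u^j_t\le0$ on all of $[0,l_j]$.
\begin{itemize}
\item For $l_j\ge2T$, every subgradient at $T$ is at most $-u^j_T/T$, so $u^j_t\ge2u^j_T$ on $[0,T]$.
\item At any point with $u^j_T\ge\tau T/2$ (such points exist since $\sup_X u^j_T=0$), this gives $\hat u^j_\tau\ge\tau T$.
\item Letting $T\to0$ yields $\sup_X\hat u^j_\tau=0$, which is exactly the normalization the paper quotes.
\end{itemize}

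\emph{Biduality step, range $\tau\ge-\delta$.} This range is handled by monotonicity, not by the normalization at $t=0$ or by sublinearity.
\begin{itemize}
\item For $\tau\in[-\delta,0]$ one has $\hat u^j_\tau+t\tau\le\hat u^j_{-\delta}$.
\item Small positive $\tau$ do contribute for finite $j$, contrary to your remark. For $\tau\ge0$, the bound $\hat u^j_\tau\le u^j_{l_j}-l_j\tau\le-l_j\tau$ gives $\hat u^j_\tau+t\tau\le(1-t/l_j)\hat u^j_{-\delta}$.
\end{itemize}

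\emph{The usc regularization.} Set $W_t:=\sup_{\tau<0}(w_\tau+t\tau)$. From $u_t\le W_t^*$, the order-reversing transform only gives $\hat u_\sigma\le\inf_t(W_t^*-t\sigma)$. That bound is at least $w_\sigma$, so it does not directly yield $\hat u_\sigma\le w_\sigma$. It is cleaner to avoid the star altogether.
\begin{itemize}
\item Finite maxima of $L^1$-convergent sequences converge in $L^1$. So for a.e.\ $t$ you get, everywhere,
\[
u_t\le\max\bigl(-tM,\max_k(w_{\tau_k}+t\tau_k)+t\eta,\,w_{-\delta}\bigr)\le\max\bigl(-tM,W_t+t\max(\eta,\delta)\bigr).
\]
\item Hence $u_t\le W_t$ for $t$ in a dense set.
\item This suffices, by continuity of the convex function $t\mapsto W_t(x)$ and Fenchel--Moreau for the concave function $\tau\mapsto w_\tau(x)$.
\end{itemize}
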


\begin{proof} Let us fix $\tau<0$. Extend $t\mapsto u_t^j(x)$ by $\infty$ for $t>l_j$. Due to the convergence conditions, \cite[Lemma 3.2]{DZ24}, and $t$-convexity, there exists $E \subset X$ of Lebesgue measure zero such that for any $x \in X \setminus E$  the functions $[0,l_j] \ni t \to u^j_t(x) \in \Bbb R$ and $[0,\infty) \ni t \to u_t(x) \in \Bbb R$ are finite valued continuous convex functions, $u^j_t(x) \to u_t(x)$ uniformly on compacts of $[0,\infty)$, and  $u^j_0(x) = u_0(x) = V_\theta(x)$.

Additionally, since $\sup_X u_t =0$, by Hartogs' lemma, we get that $u_t \to u_\infty \in \PSH_\theta$ as $t\to\infty$, hence we can also assume that $u_t(x)/t \to 0$.

As a result, the previous lemma is applicable, and we get that $\hat u^j_\tau(x) \to \hat u_\tau(x)$, $x \in X \setminus E$. Thus $\hat u^j_\tau \to \hat u_\tau$ pointwise a.e. on $X$. Since it is well known that $\sup_X \hat u^j_\tau = \sup_X \hat u_\tau$, we get from Hartogs' lemma that $\hat u^j_\tau \to_{L^1} \hat u_\tau$, as desired.
\end{proof}

\subsection{Destabilizing rays on klt pairs}

As in the previous subsection, let $X$ be a compact normal K\"ahler space. Additionally, let
$\Delta\geq0$ be an effective $\mathbb R$-Weil divisor such that
$K_X+\Delta$ is $\mathbb R$-Cartier and $(X,\Delta)$ is klt. Let $\psi$ be a qpsh function on $X$ such that $e^{-\psi} d\mu$ has unit mass, where $d\mu$ is an adapted measure for $(X,\Delta)$.

Throughout this section, we fix a log resolution
$
\pi\colon Y\to X$
of the pair $(X,\Delta)$. As explained in \cite{BBEGZ16}, the klt
condition allows us to write
\[
\pi^*d\mu
=
e^{\psi^+-\psi^-}\omega_Y^n,
\]
where $\omega_Y$ is a K\"ahler form on $Y$ and $\psi^+,\psi^-$ are
quasi-plurisubharmonic functions on $Y$ with divisorial
singularities and the divisorial Lelong numbers of $\psi^-$ are strictly less than $1$. In particular,
$e^{-\psi^-}\in L^p(Y,\omega_Y^n)$ for some $p>1$.

Consequently,
\[
\pi^*\bigl(e^{-\psi}\,d\mu\bigr)
=
e^{\psi^+-\psi^--\psi \circ \pi}\omega_Y^n.
\]
Under the assumed integrability of $e^{-\psi}\,d\mu$, this is a tame
measure on $Y$ in the sense of \cite{BBEGZ16, DZ24}: indeed, $\psi^+$ has
analytic singularity type, while $\psi^-+\pi^*\psi$ is
qpsh.

Pulling back takes $\theta$-psh potentials on
$(X,\Delta)$ to the corresponding $\pi^*\theta$-psh potentials on the smooth K\"ahler
manifold $Y$. Thus all psh data from the singular space $(X,\Delta)$ can be pulled back to  the smooth $Y$, as considered in \cite{DZ24}. Hence all the results from that paper apply word-for-word in our singular context as well.

The radial formulation of the next result builds on the Ross--Witt Nystr\"om correspondence between test curves and weak geodesic rays \cite{RWN14}, together with its extensions to finite-energy rays and big cohomology classes developed in \cite{DDNL3,DX20, DZ24}.

\begin{theorem}[=Theorem \ref{thm:  non_bound_imply_destab_ray}]
\label{thm:  non_bounded_gives_destab_ray} Let $0< \lambda < \delta_\psi(X,\Delta,\{\theta\})$ and assume that $\mathcal{D}^\lambda: \mathcal E^1_\theta \to \Bbb R$ is not bounded from below.  Then there exists a non-trivial finite energy geodesic ray $\{v_t\}\in\mathcal{R}^1_\theta$ such that $\mathcal D^{\lambda+ \varepsilon}\{v_t\} < 0$ for all $0< \lambda  + \varepsilon < \delta_\psi(X,\Delta,\{\theta\})$.
\end{theorem}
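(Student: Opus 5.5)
Since $\mathcal D^\lambda$ is not bounded from below, we pick $\varphi_j\in\mathcal E^1_\theta$, normalized by $\sup_X\varphi_j=0$, with $\mathcal D^\lambda(\varphi_j)\to-\infty$. Because $\mathcal L^\lambda$ is bounded below along sup-normalized potentials (it is at least $-\frac1\lambda\log\int e^{-\lambda\varphi-\psi}d\mu$, and the sup-normalization only helps), unboundedness forces $I_\theta(\varphi_j)\to-\infty$, i.e.\ $d_1(V_\theta,\varphi_j)\to\infty$. Let $[0,l_j]\ni t\mapsto u^j_t$ be the unit-speed finite energy geodesic from $V_\theta$ to $\varphi_j$, with $l_j:=d_1(V_\theta,\varphi_j)\nearrow\infty$. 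Along geodesics $I_\theta$ is affine, and by the quasi-convexity arguments of \cite{DZ24} (Hölder applied to $\int e^{-\lambda u_t-\psi}d\mu$ along the segment) we expect $\mathcal D^\lambda(u^j_t)\le \frac{t}{l_j}\mathcal D^\lambda(\varphi_j)+C$ for $t\in[0,l_j]$. We also expect $\sup_X u^j_t=0$ after the usual normalization, since sup is concave-ish along the segment. The standard compactness argument of \cite{DDNL3,DZ24} (uniform $d_1$-Lipschitz bounds and $L^1$-compactness of sup-normalized $\theta$-psh functions) lets us extract a subsequence converging locally in $L^1$ on $(0,\infty)\times X$ to a finite energy geodesic ray $\{v_t\}\in\mathcal R^1_\theta$ with $I_\theta(v_t)=-t$, hence nontrivial. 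By upper semicontinuity of $I_\theta$, $I_\theta(v_t)\ge \limsup I_\theta(u^j_t)=-t$, and in fact equality holds by the usual argument.

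The difficulty is controlling $\mathcal L^{\lambda+\varepsilon}$ along the limit ray, since without convexity of the Ding functional the inequality $\mathcal D^\lambda(u^j_t)\lesssim 0$ does not pass to the limit directly. Here we follow the route suggested by the authors: we use the Legendre transforms. By Proposition \ref{prop:  L^1_conv_Lagrange_dual}, for every $\tau<0$ we have $\hat u^j_\tau\to\hat v_\tau$ in $L^1$.

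By \cite[Theorem~1.4]{DZ24}, the slope of $\mathcal L^{\lambda'}$ along a ray is computed from the integrability thresholds of the $\hat v_\tau$:
\[
\lim_{t\to\infty}\frac{\mathcal L^{\lambda'}(v_t)}{t}=\sup\{\tau<0:\ \int_X e^{-\lambda'\hat v_\tau-\psi}d\mu=\infty\}\quad(\text{or }0).
\]
The analogous statement should hold with quantitative control for the segments $u^j$: the fact that $\mathcal L^\lambda(u^j_t)-I_\theta(u^j_t)\le t\,\mathcal D^\lambda(\varphi_j)/l_j+C$ forces $\mathcal L^\lambda(u^j_{l_j})\le -c\,l_j$ for some uniform $c>0$. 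This yields a $\tau_0<0$, independent of $j$, such that $\int e^{-\lambda\hat u^j_{\tau_0}-\psi}d\mu$ is not uniformly bounded, or blows up. Now Theorem \ref{thm: main_klt}, applied (through a finite cover and after absorbing $\psi$, exactly as in the proof of Theorem \ref{thm: alpha_eq alpha_A_later}) to $\hat u^j_{\tau_0}\to\hat v_{\tau_0}$, gives the contrapositive: if $e^{-(\lambda+\varepsilon')\hat v_{\tau_0}-\psi}$ were integrable, then the integrals of $e^{-\lambda\hat u^j_{\tau_0}-\psi}$ would converge, which is a contradiction. Hence $e^{-(\lambda+\varepsilon)\hat v_{\tau_0}-\psi}\notin L^1$ for all $\varepsilon>0$. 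Feeding this into the slope formula gives $\mathcal L^{\lambda+\varepsilon}\{v_t\}\le\tau_0$, while $I_\theta(v_t)=-t$. So we need $\tau_0<-1$ relative to the energy slope, and the slack must come from how negative $\mathcal D^\lambda$ was. To get this, we rescale by choosing $\varphi_j$ with $\mathcal D^\lambda(\varphi_j)\le -\kappa\, l_j$ for some fixed $\kappa>0$. Such a choice should be available because $\mathcal D^\lambda$ is (coercively) unbounded, using the homogeneity trick $\mathcal D^\lambda$ versus $\mathcal D^{\lambda'}$ with $\lambda<\lambda'$.

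The main obstacle is this quantitative step: turning the failure of boundedness into a linear rate $\mathcal D^\lambda(\varphi_j)\le-\kappa l_j$ and transferring it to a fixed $\tau_0$ that beats the energy slope. For the rate, we first try the interpolation $\mathcal L^{\lambda}$ versus $\mathcal L^{\lambda t}$ via Hölder along the segment. Finally, $\lambda+\varepsilon<\delta_\psi$ ensures finiteness of $\mathcal D^{\lambda+\varepsilon}$ on $\mathcal E^1_\theta$ via the valuative criterion, so the slopes are well defined.
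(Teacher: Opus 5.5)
\textbf{There is a genuine gap,} and it is at the step you yourself flag as the main obstacle. You try to create slack by choosing $\varphi_j$ with $\mathcal D^\lambda(\varphi_j)\le-\kappa l_j$. But unboundedness from below of $\mathcal D^\lambda$ gives no linear rate for $\mathcal D^\lambda$ itself, and no such rate is needed.

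The missing observation is that the energy term already scales like $l_j$. From $\mathcal D^\lambda(\varphi_j)\le 0$ alone you get $\mathcal L^\lambda(\varphi_j)\le I_\theta(\varphi_j)=-l_j$. Since $\hat u^j_\tau\le\varphi_j-l_j\tau$, this yields $\log\int_X e^{-\lambda\hat u^j_\tau-\psi}d\mu\ge-\lambda(1+\tau)I_\theta(\varphi_j)\to\infty$ for \emph{every} $\tau\in(-1,0)$. Theorem~\ref{thm: main_klt} then forces $\int_X e^{-\lambda\hat u_\tau-\psi}d\mu=\infty$, so $\mathcal L^\lambda\{u_t\}\le\tau$ by \cite[Theorem~1.4]{DZ24}. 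For the energy you only need $I_\theta(u_t)\ge-t$, which is exactly what upper semicontinuity gives. Hence $\mathcal D^\lambda\{u_t\}\le 1+\tau$, and letting $\tau\searrow-1$ gives $\mathcal D^\lambda\{u_t\}\le 0$.

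Strict negativity at $\lambda+\varepsilon$ does not come from pushing $\tau_0$ below $-1$. It comes from the comparison inequality \cite[eq.~(41)]{DZ24}: for sup-normalized $w$, $\mathcal D^\lambda(w)\ge c_1\mathcal D^{\lambda+\varepsilon}(w)-c_2I_\theta(w)-C$ with $c_1,c_2>0$. Along a nontrivial ray, where $I\{v_t\}<0$, this gives $c_1\mathcal D^{\lambda+\varepsilon}\{v_t\}\le\mathcal D^\lambda\{v_t\}+c_2I\{v_t\}<0$. Your ``homogeneity trick'' can be rescued only in this form: applied to $\varphi_j$ it gives a linear rate $\mathcal D^{\lambda'}(\varphi_j)\le-\kappa_{\lambda'}l_j+C$ for each $\lambda'>\lambda$, not for $\mathcal D^\lambda$.

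\textbf{Several supporting claims are also wrong or unjustified.}

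\emph{The limit ray.} The $L^1_{\rm loc}$ limit of the segments is only a sublinear subgeodesic ray with $-t\le I_\theta(u_t)\le 0$. It need not be a geodesic, and nothing in your setup gives $I_\theta(u_t)=-t$: without control of $\mathcal D^\lambda$ or entropy along the segments there is no $d_1$-compactness. A priori $u_t\equiv V_\theta$ is possible. Nontriviality has to be read off from $\int_X e^{-\lambda\hat u_\tau-\psi}d\mu=\infty$. This is incompatible with $u_t\equiv V_\theta$, because $\lambda<\delta_\psi(X,\Delta,\{\theta\})$ makes $e^{-\lambda V_\theta-\psi}$ integrable. You then pass to the maximization $\{v_t\}\in\mathcal R^1_\theta$. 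Use $\hat v_\tau=P_\theta[\hat u_\tau]$ and \cite[Proposition~2.5]{DZ24} to keep the non-integrability, and \cite[Lemma~5.5]{DXZ25} to keep the slopes of $\mathcal L^\lambda$ and $I_\theta$.

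\emph{The convexity estimate.} The bound $\mathcal D^\lambda(u^j_t)\le\frac{t}{l_j}\mathcal D^\lambda(\varphi_j)+C$ is exactly the convexity of the Ding functional that fails without semipositivity of the twist. H\"older does not give it, since $-\lambda u_t$ is concave in $t$. Fortunately only the endpoint $t=l_j$ is ever used.

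\emph{The first step.} $\mathcal L^\lambda$ is not bounded below on sup-normalized potentials when $\lambda>\alpha^A_\psi$. The correct reason you can arrange $I_\theta(\varphi_j)\to-\infty$ is that $\mathcal L^\lambda$ is bounded below on the $L^1$-compact sets $\{d_1(V_\theta,w)\le d,\ \sup_X w=0\}$, by Theorem~\ref{thm: main_klt}.

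\emph{Minor.} The slope formula reads $\mathcal L^\lambda\{v_t\}=\sup\{s:\int_X e^{-\lambda\hat v_s-\psi}d\mu<\infty\}$; you wrote the supremum of the non-integrable set instead.
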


\begin{proof}  Since $\mathcal{D}^\lambda$ is not bounded from below there exist potentials $u^j \in\mathcal{E}^1_\theta $ with $\sup_X u^j =0$, such that
$$d_1(V_\theta,u^j) = -I(u^j) \geq j \ \ \ \textup{ and } \ \ \ \mathcal{D}^\lambda(u^j) \leq -j.$$

Indeed, if it were impossible to arrange $d_1(V_\theta,u^j) = -I(u^j) \geq j \to \infty$ as above, then, for sufficiently large $d> 0$, $w \to \mathcal{D}^\lambda(w) $ would be bounded below for $w \in \mathcal E^1_\theta$ satisfying $d_1(V_\theta, w) \geq d$ and $\sup_X w =0$. We now observe that $w \to \mathcal{D}^\lambda(w)$ is also bounded below for $d_1(V_\theta,w) \leq d$ and $\sup_X w =0$, contradicting the non-boundedness from below of $\mathcal{D}^\lambda(w)$. Since $I(\cdot)$ is $d_1$-continuous, it is enough to show that $\mathcal L^\lambda(w)$ is bounded below for $w$ satisfying $d_1(V_\theta,w) \leq d$ and $\sup_X w=0$. The set defined by $d_1(V_\theta,w) \leq d$ and $\sup_X w=0$ is weakly compact in $L^1$ \cite{BBEGZ16}, and therefore Theorem \ref{thm: main_klt} gives the required lower bound for $\mathcal L^\lambda(w)$. The same result also implies that the map $w \to \mathcal D^\lambda(w), \  w \in \mathcal E^1_\theta$ is $d_1$-continuous.

By the definition of $\mathcal{D}^\lambda$ and the inequality $-I(u^j) \geq j$, we obtain
\begin{equation}\label{eq:  L_energy_est_1}
\mathcal{L}^\lambda(u^j) \leq I(u^j) \ \textup{ and }  \ \  -I(u^j)\to \infty .
\end{equation}

Let $[0,-I(u^j)] \ni t \to u^j_t \in \mathcal{E}^1_\theta$ be the weak geodesic segment joining $u^j_0 := V_\theta$ to $u^j_{-I(u^j)} := u^j$.

Since $\sup_X u_t^j=0$, Hartogs' lemma implies the existence of a subgeodesic ray
$[0,\infty) \ni t \to u_t \in \mathcal{E}^1_\theta$ such that $u_0 = V_\theta$ and $u^j(t,x) := u^j_t(x) \to u(t,x) := u_t(x)$ in $L^1_{\textup{loc}}((0,\infty) \times X)$. Since $I_\theta(\cdot)$ is upper semicontinuous in the $L^1$ topology, we obtain 
\begin{equation}\label{eq:  I_est}
-t \leq I(u_t) \leq 0.
\end{equation}
It is possible that the subgeodesic ray $t \to u_t$ is identically equal to $V_\theta$, but we now show that this is not the case.

Let $\tau \in (-1,0)$. Proposition \ref{prop:  L^1_conv_Lagrange_dual} gives $\hat u^j_{\tau} \to_{L^1} \hat u_{\tau}$. Together with \eqref{eq:  L_energy_est_1}, we claim that this convergence implies
\begin{equation}\label{eq:  exponent_est}
\int_X e^{-\lambda \hat u_{\tau}} e^{-\psi}d \mu = \infty.
\end{equation}
Indeed, suppose that $\int_X e^{-\lambda \hat u_{\tau}}e^{-\psi}d \mu < \infty$. Applying Theorem~\ref{thm: main_klt} locally to the quasi-psh functions $\hat u^j_\tau+\lambda^{-1}\psi$ and $\hat u_\tau+\lambda^{-1}\psi$, after adding a common smooth local potential, we obtain
$$\int_X e^{-\lambda  \hat u^j_{\tau}} e^{-\psi}d \mu \to \int_X e^{-\lambda \hat u_{\tau}} e^{-\psi}d \mu < \infty.$$
We will show, however, that this is a contradiction because the terms on the left are unbounded. 

Indeed, by the definition of the Legendre transform $\hat u^j_\tau$, we have $\hat u^j_{\tau} \leq   u^j + \tau I(u^j)$. Thus, we obtain
\begin{flalign*}
\log  \int_X e^{-\lambda \hat u^j_{\tau}} e^{-\psi}d \mu &\geq   \log  \int_X e^{-\lambda (u^j + \tau I(u^j))} e^{-\psi}d \mu = -\lambda \mathcal L^\lambda(u^j) - \lambda \tau I(u^j) \\
&\geq -\lambda(1 + \tau )  I(u^j)\to \infty,
\end{flalign*}
where the last two steps follow from \eqref{eq:  L_energy_est_1} and the fact that $1 + \tau >0$.

Thus, we have proved \eqref{eq:  exponent_est}. This immediately shows that $\{u_t\}$ cannot be identically equal to $V_\theta$, since $\lambda < \delta_\psi(X,\Delta,\{\theta \}) \leq c_{(e^{-\psi}\mu,X)}[V_\theta]$. Using \cite[Theorem 1.4]{DZ24}, \eqref{eq:  exponent_est}, and \eqref{eq:  I_est}, we now obtain
$$\mathcal D^\lambda\{u_t\} = -I\{u_t\} + \sup\left\{s\in\RR:
\int_X e^{-\lambda\hat u_s}\,e^{-\psi}d \mu<\infty
\right\}  \leq  1 + \tau.$$
Since $\tau \in (-1,0)$ is arbitrary, letting $\tau \searrow -1$, we obtain that $\mathcal D^\lambda\{u_t\} \leq 0.$

Let $\{v_t\} \in \mathcal R^1_\theta$ be the maximization of the sublinear finite energy subgeodesic $\{u_t\}$ (recall \cite[(30)]{DZ24}). 

By definition of maximization, $\hat v_\tau = P_\theta[\hat u_\tau], \tau <0$, hence due to \cite[Proposition 2.5]{DZ24} we get that $\int_X e^{-\lambda \hat v_{\tau}} e^{-\psi}d \mu = \infty$ for $\tau \in (-1,0)$. Thus  $\{v_t\}$ cannot be constant equal to $V_\theta$.

By \cite[Lemma 5.5]{DXZ25}  we have that $\mathcal L^\lambda\{v_t\} = \mathcal L^\lambda\{u_t\}$ and $I\{u_t\} = I\{v_t\}$. Thus  $\mathcal D^\lambda\{v_t\}\leq 0$ and $\sup_X v_t =0$.

That $\mathcal D^{\lambda+ \varepsilon}\{v_t\} < 0$ for all small $\varepsilon >0$ follows immediately from the next lemma.
\end{proof}

The following result is a direct consequence of \cite[eq. (41)]{DZ24}:
\begin{lemma}
We fix $\eta > \lambda > 0 $ and $\alpha\in(0,\min\{\lambda,\alpha_\psi(X,\Delta, \{\theta\})\})$. Then there exists a constant $C_\alpha>0$ such that:
\begin{equation} \label{eq:  D_twist_compare}
{\mathcal{D}^\lambda(w)}\geq\frac{\eta(\lambda-\alpha)}{\lambda(\eta-\alpha)} \mathcal{D}^\eta(w)-\frac{\alpha(\eta-\lambda)}{\lambda(\eta-\alpha)} I_\theta(w) - \frac{\eta-\lambda}{\lambda(\eta-\alpha)} C_\alpha, \ \ \ w \in \mathcal{E}^1_\theta, \ \sup_X w =0.
\end{equation}
\end{lemma}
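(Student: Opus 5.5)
The inequality \eqref{eq:  D_twist_compare} should follow from H\"older interpolation between the exponents $\alpha$ and $\eta$, combined with the uniform alpha-type bound supplied by Theorem~\ref{thm: alpha_eq alpha_A_later}. Since $\alpha<\lambda<\eta$, I will write $\lambda$ as a convex combination
\[
\lambda=s\eta+(1-s)\alpha,\qquad s:=\frac{\lambda-\alpha}{\eta-\alpha}\in(0,1).
\]
The probability measure $\nu:=e^{-\psi}d\mu$ is the natural reference measure. For $w\in\mathcal E^1_\theta$ with $\sup_X w=0$, H\"older's inequality with exponents $1/s$ and $1/(1-s)$ gives
\[
\int_X e^{-\lambda w}\,d\nu\le\Big(\int_X e^{-\eta w}\,d\nu\Big)^{s}\Big(\int_X e^{-\alpha w}\,d\nu\Big)^{1-s}.
\]
If $\int_X e^{-\eta w}d\nu=\infty$, then $\mathcal D^\eta(w)=-\infty$ and there is nothing to prove, so I will assume this integral is finite.

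To control the second factor uniformly, I will use that $\alpha<\alpha_\psi(X,\Delta,\{\theta\})=\alpha^A_\psi(X,\Delta,\{\theta\})$ by Theorem~\ref{thm: alpha_eq alpha_A_later}. By the definition of $\alpha^A_\psi$, there is then $C'_\alpha$ with $\int_X e^{-\alpha w}d\nu\le C'_\alpha$ for all $w\in\PSH_\theta$ with $\sup_X w=0$. This is the only non-formal input. It is exactly where the identification of the two alpha invariants, and hence the klt Demailly--Koll\'ar Theorem~\ref{thm: main_klt}, enters.

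Next I take $-\frac1\lambda\log$ of the H\"older inequality to get
\[
\mathcal L^\lambda(w)\ge \frac{s\eta}{\lambda}\,\mathcal L^\eta(w)-\frac{1-s}{\lambda}\log C'_\alpha .
\]
Then I subtract $I_\theta(w)$ and write $\mathcal L^\eta(w)=\mathcal D^\eta(w)+I_\theta(w)$, which gives
\[
\mathcal D^\lambda(w)\ge \frac{s\eta}{\lambda}\mathcal D^\eta(w)+\Big(\frac{s\eta}{\lambda}-1\Big)I_\theta(w)-\frac{1-s}{\lambda}\log C'_\alpha .
\]
The coefficients work out as $\frac{s\eta}{\lambda}=\frac{\eta(\lambda-\alpha)}{\lambda(\eta-\alpha)}$, $\frac{s\eta}{\lambda}-1=-\frac{\alpha(\eta-\lambda)}{\lambda(\eta-\alpha)}$, and $\frac{1-s}{\lambda}=\frac{\eta-\lambda}{\lambda(\eta-\alpha)}$. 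Setting $C_\alpha:=\max\{\log C'_\alpha,0\}$ then yields \eqref{eq:  D_twist_compare}.

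I do not expect a genuine obstacle. The only points needing care are the sign bookkeeping, the condition $0<s<1$ (which is exactly why both $\alpha<\lambda$ and $\eta>\lambda$ are assumed), and the uniform alpha bound, which is already available from the earlier theorem.
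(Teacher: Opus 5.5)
Your argument is correct and is essentially the paper's. The paper just cites \cite[eq.~(41)]{DZ24}, which is this same H\"older interpolation between the exponents $\alpha$ and $\eta$, combined with a uniform bound $\int_X e^{-\alpha w}e^{-\psi}\,d\mu\le C'_\alpha$; your explicit use of Theorem~\ref{thm: alpha_eq alpha_A_later} to pass from $\alpha<\alpha_\psi$ to $\alpha<\alpha^A_\psi$ spells out the input the paper leaves implicit. (To get $C_\alpha>0$ strictly, take for instance $C_\alpha:=\log C'_\alpha+1$, which is positive because $C'_\alpha\ge 1$.)
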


\begin{corollary} Suppose that $0< \lambda < \delta_\psi(X,\Delta,{\theta})$. Then \eqref{eq:  KE_cont eq} has a solution $u_\lambda \in \PSH_\theta$ with minimal singularity.
\end{corollary}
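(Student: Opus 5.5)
The plan is to show that $\mathcal D^\lambda$ is coercive on $\mathcal E^1_\theta$ and then run the variational method of \cite{BBGZ13,BBEGZ16}. I would argue by contradiction.

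\emph{Step 1: $\mathcal D^\lambda$ is bounded below.} Suppose it is not. Theorem \ref{thm:  non_bounded_gives_destab_ray} then produces a non-trivial ray $\{v_t\}\in\mathcal R^1_\theta$ with $\mathcal D^{\lambda+\varepsilon}\{v_t\}<0$ for every small $\varepsilon>0$ satisfying $\lambda+\varepsilon<\delta_\psi(X,\Delta,\{\theta\})$. This contradicts \cite[Theorem 1.5]{DZ24}, which identifies $\delta_\psi$ with the geodesic semistability threshold: every sublinear subgeodesic ray has $\mathcal D^{\lambda+\varepsilon}\{v_t\}\ge 0$ when $\lambda+\varepsilon<\delta_\psi$. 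Hence $\mathcal D^{\lambda'}$ is bounded below for every $\lambda'<\delta_\psi$.

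\emph{Step 2: coercivity of $\mathcal D^\lambda$.} Fix $\eta\in(\lambda,\delta_\psi)$ and $\alpha\in(0,\min\{\lambda,\alpha_\psi\})$. We have $\alpha_\psi>0$: by Theorem \ref{thm: alpha_eq alpha_A} it equals $\alpha^A_\psi$, which is positive by the uniform Skoda estimate combined with $e^{-\psi^-}\in L^p$ on $Y$. Apply \eqref{eq:  D_twist_compare} to $w$ with $\sup_X w=0$. Step 1 gives $\mathcal D^\eta(w)\ge -C$, so
\[
\mathcal D^\lambda(w)\ge \frac{\alpha(\eta-\lambda)}{\lambda(\eta-\alpha)}\,d_1(V_\theta,w)-C'.
\]
Here $-I_\theta(w)$ equals $d_1(V_\theta,w)$ up to a constant for normalized $w$. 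Since $\mathcal D^\lambda$ is invariant under adding constants, this is genuine $d_1$-coercivity modulo constants.

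\emph{Step 3: existence of a minimizer and regularity.} Take a minimizing sequence normalized by $\sup_X w_k=0$. Coercivity keeps $d_1(V_\theta,w_k)$ bounded, so the sequence lies in a weakly $L^1$-compact set \cite{BBEGZ16}. Extract an $L^1$-limit $w$. The energy $I_\theta$ is $L^1$-usc, and Theorem \ref{thm: main_klt} (as used in the proof of Theorem \ref{thm:  non_bounded_gives_destab_ray}) shows that $\mathcal L^\lambda$ is continuous along this sequence. Hence $\mathcal D^\lambda(w)\le\liminf\mathcal D^\lambda(w_k)$, and $w$ is a minimizer.

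The Euler--Lagrange argument of \cite{BBGZ13,BBEGZ16} (projection-differentiability of $I_\theta\circ P_\theta$) shows that $\theta_w^n=e^{-\lambda w-\psi}d\mu/\int_X e^{-\lambda w-\psi}d\mu$. After adding a constant, $u_\lambda:=w+c$ solves \eqref{eq:  KE_cont eq}.

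\emph{Main obstacle: minimal singularity.} The right-hand side $e^{-\lambda u_\lambda-\psi}d\mu$ must be shown to lie in $L^p$ with respect to a smooth volume form on $Y$ for some $p>1$. I would use strong openness (Proposition \ref{prop: adapted_strong_opn}) together with Theorem \ref{thm: main_klt}: since $\lambda<\delta_\psi\le$ the integrability threshold of $u_\lambda\in\mathcal E^1_\theta$, a slightly larger exponent is still integrable. I expect the tame structure $e^{\psi^+-\psi^--\psi\circ\pi}$ with $e^{-\psi^-}\in L^p$, combined with H\"older's inequality, to give $L^p$ density on $Y$. The $L^\infty$ estimate of \cite{BEGZ10} for big classes then shows that $u_\lambda$ has minimal singularity type.

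The delicate point is ensuring that the twist $\psi$ does not destroy $L^p$-integrability. Integrability at exponent $\lambda(1+\varepsilon)$ handles the $u_\lambda$ factor, but $e^{-\psi}$ must itself be $L^{1+\varepsilon}$ against $d\mu$. I would obtain this from strong openness applied to $e^{-\psi}d\mu$, which has finite mass.
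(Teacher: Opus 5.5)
Your proposal is correct and follows the paper's route. The steps are boundedness from below of $\mathcal D^{\lambda'}$ for $\lambda'<\delta_\psi$ (exactly the content of Theorem~\ref{thm: delta_eq delta_A}, which you unpack via the destabilizing ray and \cite[Theorem 1.5]{DZ24}), $d_1$-coercivity of $\mathcal D^\lambda$ from the comparison inequality \eqref{eq:  D_twist_compare}, existence using the $L^1$-continuity of $\mathcal L^\lambda$ from Theorem~\ref{thm: main_klt}, and minimal singularity via \cite[Theorem B]{BEGZ10}. The only differences are that you spell out the direct method and Euler--Lagrange step the paper delegates to \cite[Proposition 5.2]{DZ24}, and you sketch the $L^{1+\varepsilon}$-density check (strong openness plus H\"older) that the paper leaves implicit.
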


\begin{proof} Let $\lambda' \in (\lambda,\delta_\psi(X,\Delta,{\theta}))$. By Theorem \ref{thm: delta_eq delta_A} we get that $\mathcal D^{\lambda'} :\mathcal E^1_\theta \to \Bbb R$ is bounded from below. The previous lemma then implies that $\mathcal D^\lambda :\mathcal E^1_\theta \to \Bbb R$ is $d_1$-coercive. Since $u \to \mathcal L^\lambda(u)$ is $L^1$-continuous due to Theorem \ref{thm: main_klt}, \cite[Proposition 5.2]{DZ24} implies that \eqref{eq:  KE_cont eq} has a solution $u_\lambda \in \mathcal E^1_\theta$. That $u_\lambda$ has minimal singularity type is a consequence of the Kolodziej type estimate of \cite[Theorem B]{BEGZ10}.
\end{proof}

\section{Local alpha invariants and normalized volumes}
\label{sec:  GT}
Let $(X,p)$ be an isolated log terminal singularity, and let $p \in \Omega \subset X$ be a strongly pseudoconvex neighborhood, as described in the introduction. 

We recall a special case of the  Cegrell class \cite{Ce98} that is needed  in the definition of the local alpha
invariants from \cite{GT23}:
\[
\mathcal T_0(\Omega)
:=
\left\{
u\in\PSH(\Omega)\cap\mathcal C^0(\overline\Omega):
u|_{\partial\Omega}=0,
\quad
\int_\Omega (\ddc u)^n<\infty
\right\}.
\]
Following \cite[Definition~2.6]{GT23}, let $\mathcal F(\Omega)$ be the
set of all $u\in\PSH(\Omega)$ for which there exists a decreasing
sequence $u_j\in\mathcal T_0(\Omega)$ such that
\[
u_j\searrow u,
\qquad
\sup_j\int_\Omega(\ddc u_j)^n<\infty.
\]
For $u\in\mathcal F(\Omega)$, the Monge--Amp\`ere measure
$(\ddc u)^n$ is well defined and has finite mass. We set
\begin{equation}
\label{eq: F1__local}
\mathcal F_1(\Omega)
:=
\left\{
u\in\mathcal F(\Omega):
\int_\Omega(\ddc u)^n\leq 1
\right\}.
\end{equation}
By \cite[Proposition~2.7]{GT23}, relying on an observation of Zeriahi \cite{Zer09}, the class $\mathcal F_1(\Omega)$ is
compact in the $L^1(\Omega)$ topology.

\begin{theorem}[=Theorem~\ref{thm: local__alpha_intro}]
\label{thm: local__alpha}
Let $(X,p)$ be an isolated log terminal singularity as above. Then
\[
\alpha(X,p)
=
\widetilde\alpha(X,p)
=
\widehat{\mathrm{vol}}(X,p)^{1/n}.
\]
\end{theorem}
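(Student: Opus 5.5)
The second equality $\widetilde\alpha(X,p)=\widehat{\mathrm{vol}}(X,p)^{1/n}$ is already proved in \cite{GT23}, so it remains to show $\alpha(X,p)=\widetilde\alpha(X,p)$. We prove the two inequalities separately.

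The inequality $\alpha(X,p)\le\widetilde\alpha(X,p)$ is formal. If $\lambda<\alpha(X,p)$, then $\int_\Omega e^{-\lambda u}\,d\mu_p$ is finite for every $u\in\mathcal F_1(\Omega)$. Hence $c_{\mu_p,\Omega}[u]\ge\lambda$ for all such $u$. Taking the infimum over $u$ gives $\widetilde\alpha(X,p)\ge\lambda$.

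For the reverse inequality we argue by contradiction, as in the proof of Theorem~\ref{thm: alpha_eq alpha_A_later}. Fix $\lambda<\lambda'<\widetilde\alpha(X,p)$ and suppose there are $u_j\in\mathcal F_1(\Omega)$ with $\int_\Omega e^{-\lambda u_j}\,d\mu_p\to\infty$. By the $L^1$-compactness of $\mathcal F_1(\Omega)$ \cite[Proposition~2.7]{GT23}, after passing to a subsequence $u_j\to u\in\mathcal F_1(\Omega)$ in $L^1$. Since $\lambda'<\widetilde\alpha(X,p)\le c_{\mu_p,\Omega}[u]$, we have $\int_\Omega e^{-\lambda' u}\,d\mu_p<\infty$. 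Because $X$ has an isolated log terminal singularity with $K_\Omega$ $\mathbb Q$-Cartier, the pair $(\Omega,0)$ is klt and $\mu_p$ is an adapted measure. We want to apply Theorem~\ref{thm: main_klt} with $K$ compact. The difficulty is that the integral is over all of $\Omega$, not over a compact subset.

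This boundary issue is the main obstacle, and we split $\Omega$ into two pieces. On a compact piece $K=\overline{\Omega'}$ with $p\in\Omega'\Subset\Omega$, Theorem~\ref{thm: main_klt} (with $\lambda'$ as the integrability exponent and any $t\in(\lambda,\lambda')$) gives $e^{-\lambda u_j}\to e^{-\lambda u}$ in $L^1(W,\mu_p)$ for some $W\supset K$. In particular these integrals are bounded.

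Near $\partial\Omega$, the space is smooth and $\mu_p$ is comparable to Lebesgue measure, so we need uniform integrability of $e^{-\lambda u_j}$ on $\Omega\setminus\Omega'$. We would get this from the uniform boundary control available for the Cegrell class: functions in $\mathcal F_1(\Omega)$ satisfy uniform Skoda/Kołodziej-type exponential estimates on a collar of the boundary. This is in the spirit of the smooth local Moser--Trudinger results \cite{GKY13,BB22} and of the estimates in \cite{GT23}, which hold for every exponent on domains avoiding the singularity, since there the Lelong numbers are uniformly small away from $p$.

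As a first attempt, we would use the comparison principle to bound $u_j\ge C\,\rho$ on the collar, where $\rho$ is a fixed exhaustion. Failing that, we would use the fact that the Lelong numbers of $u_j$ at points $y\ne p$ are bounded by $\int(\ddc u_j)^n\le1$ raised to the power $1/n$ times a constant. Combining this with the classical uniform Skoda theorem on the smooth part, via a covering of the collar, would give a uniform $L^1$ bound for $e^{-\lambda u_j}$ there. If the uniform bound on the collar is hard to obtain directly, we would instead invoke the independence of $\widetilde\alpha$ of $\Omega$ \cite{GT23} to shrink the domain.

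Adding the bounded interior integrals to the uniform collar bound contradicts $\int_\Omega e^{-\lambda u_j}\,d\mu_p\to\infty$. Hence $\alpha(X,p)\ge\lambda$, and letting $\lambda\nearrow\widetilde\alpha(X,p)$ gives $\alpha(X,p)\ge\widetilde\alpha(X,p)$.
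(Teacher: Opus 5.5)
Your treatment of $\alpha(X,p)\le\widetilde\alpha(X,p)$, of the second equality, and of the interior piece via Theorem~\ref{thm: main_klt} is fine. The proof then rests entirely on the uniform collar bound $\sup_j\int_{\Omega\setminus\Omega'}e^{-\lambda u_j}\,d\mu_p<\infty$, and none of the mechanisms you offer proves it.

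\begin{itemize}
\item \emph{The bound $u_j\ge C\rho$.} This is false. Members of $\mathcal F_1(\Omega)$ can have logarithmic poles at points $a_j\to\partial\Omega$ (e.g.\ suitably normalized pluricomplex Green functions), so there is no uniform lower bound near $\partial\Omega$. For the same reason, Lelong numbers are not ``uniformly small away from $p$'', and integrability does not hold for every exponent there.
\item \emph{Lelong numbers plus Skoda.} This route loses a factor $n$. With the paper's normalization of $\ddc$, $u=\log|z-a|^2$ at a smooth point has unit mass, and $e^{-\lambda u}$ is integrable exactly for $\lambda<n$. Skoda's criterion, fed only with the mass bound on Lelong numbers, certifies just $\lambda<1$. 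But $\widetilde\alpha(X,p)=\widehat{\mathrm{vol}}(X,p)^{1/n}$ can exceed $1$: it equals $\sqrt2$ for the $A_1$ surface singularity.
\item \emph{Covering the collar.} Skoda and Demailly--Koll\'ar are interior statements. A covering of the collar by balls inside $\Omega$ gives nothing uniform up to $\partial\Omega$, where the $u_j$ are not defined.
\item \emph{Shrinking $\Omega$.} This goes the wrong way. Restrictions of the $u_j$ do not lie in $\mathcal F_1$ of the smaller domain, and the discarded collar is exactly what must be controlled.
\end{itemize}

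The missing idea is to \emph{enlarge} the domain instead.
\begin{enumerate}
\item Reduce to $u_j\in\mathcal T_0(\Omega)$ by monotone approximation.
\item Take a strongly psh defining function $\rho$ for $\Omega$ on a neighborhood of $\overline\Omega$, and set $\Omega^+:=\{\rho<\delta\}\Supset\Omega$ and $v_j:=\sup\{v\in\PSH(\Omega^+):\ v\le0,\ v|_\Omega\le u_j\}$.
\item Then $A(\rho-\delta)\le v_j$, and $(\ddc v_j)^n$ is carried by the contact set and dominated by $(\ddc u_j)^n$. Hence $v_j\in\mathcal F_1(\Omega^+)$.
\item Since $v_j\le u_j$ on $\Omega$, we get $\int_\Omega e^{-\lambda v_j}\,d\mu_p\to\infty$.
\item Now $\overline\Omega$ is a compact subset of $\Omega^+$. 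By compactness of $\mathcal F_1(\Omega^+)$, $v_j\to v\in\mathcal F_1(\Omega^+)$ after passing to a subsequence.
\item Domain independence of $\widetilde\alpha$ \cite[Proposition~5.8]{GT23} gives $\lambda<c_{\mu_p,\Omega^+}[v]$. Theorem~\ref{thm: main_klt} with $K=\overline\Omega$ then bounds $\int_\Omega e^{-\lambda v_j}\,d\mu_p$, a contradiction.
\end{enumerate}
So domain independence is used to pass to a \emph{larger} domain, together with an extension that preserves the mass bound and only decreases the functions. No boundary estimate is needed. The collar bound you are after does hold, but the natural way to obtain it is precisely this extension past $\partial\Omega$.
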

\begin{proof}
It follows directly from the definitions that
$
\alpha(X,p)\leq\widetilde\alpha(X,p).$
We prove the reverse inequality. We fix
$0<\lambda<\widetilde\alpha(X,p)$ and suppose, toward a
contradiction, that
\[
\sup_{u\in\mathcal F_1(\Omega)}
\int_\Omega e^{-\lambda u}\,d\mu_p=\infty.
\]
We may then choose $u_j\in\mathcal F_1(\Omega)$ such that
$
\int_\Omega e^{-\lambda u_j}\,d\mu_p\to\infty.
$
Due to the approximation property of $\mathcal F_1(\Omega)$ and the monotone convergence theorem, we can assume that $u_j\in\mathcal T_0(\Omega)$.

\begin{equation}
\int_\Omega e^{-\lambda u_j}\,d\mu_p\to\infty.
\end{equation}

By strong pseudoconvexity of $\Omega$, there exists an open $\Omega'' \subset X$ satisfying $\overline{\Omega} \subset \Omega''$ and a global  strongly psh $C^2$ defining function $\rho: \Omega'' \to \Bbb R $ such that $\{\rho  < 0\} = \Omega$.

Let $\delta >0$ be such that  $\Omega' : = \{\rho < \delta\} \subset \Omega''$  is relatively compact. Borrowing ideas from the argument of \cite[Proposition~3.14]{GT23}, we introduce the envelope 
$$v_j : = \sup\{v \in \PSH(\Omega'), v \leq 0, v|_{\Omega} \leq u_j\}.$$
Since $A (\rho - \delta)$ for $A>0$ big enough is a candidate for $v_j$ we get  that $A (\rho - \delta) \leq v_j$. 

Using a standard approximation argument having roots in \cite{BT76}, the Monge--Amp\`ere measure $(\ddc v_j)^n $ is concentrated on the contact set $\{v_j = u_j\}$ and $(\ddc v_j)^n  \leq (\ddc u_j)^n $. We conclude that $v_j \in \mathcal F_1(\Omega')$.

We use the same notation $\mu_p$
for an adapted measure on $\Omega'$ as its normalization does not affect
any of the integrability exponents. 

By the compactness of
$\mathcal F_1(\Omega')$, after passing to a subsequence we may assume
that
$
v_j\to v$ in $L^1(\Omega')$
for some $v\in\mathcal F_1(\Omega')$. Since $v_j\leq u_j$ on $\Omega$ we must have that 
$$\int_\Omega  e^{-\lambda v_j} d\mu_p \to \infty.$$

Now \cite[Proposition 5.8]{GT23} allows us to compute
$\widetilde\alpha(X,p)$ using both $\Omega$ and $\Omega'$. Hence $\lambda<\widetilde\alpha(X,{\Omega'},p)=\widetilde\alpha(X,{\Omega},p)\leq c_{\mu_p,\Omega'}[v]$ and  
we can choose $t$ such that
\[
\lambda<t<c_{\mu_p}(v,{\Omega'}),
\qquad
\int_{\Omega'}e^{-tv}\,d\mu_p<\infty.
\]
But applying Theorem~\ref{thm: main_klt} on $\Omega'$, with
$K=\overline\Omega$ immediately gives
$
e^{-\lambda v_j}\to e^{-\lambda v}$ in $L^1(\Omega,\mu_p).$
 In particular, the integrals
$\int_\Omega e^{-\lambda v_j}\,d\mu_p$ are bounded, a contradiction. This concludes $\lambda\leq\alpha(X,p)$. Letting
$\lambda\nearrow\widetilde\alpha(X,p)$ yields
$\widetilde\alpha(X,p)\leq\alpha(X,p).$

Finally, \cite[Proposition~5.8]{GT23} gives
$
\widetilde\alpha(X,p)
=
\widehat{\mathrm{vol}}(X,p)^{1/n},$
which completes the proof.
\end{proof}

\begingroup
\renewcommand*{\bibfont}{\normalfont\small}
\setlength{\bibitemsep}{0pt}
\setlength{\biblabelsep}{4pt}
\printbibliography
\endgroup

\small
\noindent {\sc Department of Mathematics, University of Maryland}\\
{\tt tdarvas@umd.edu}\vspace{0.1in}

\noindent {\sc School of Mathematical Sciences, Beijing Normal University}\\
{\tt kwzhang@bnu.edu.cn}
\end{document}